\newtheorem{theorem}{Theorem}[section]
\newtheorem{lemma}[theorem]{Lemma}
\newtheorem{proposition}[theorem]{Proposition}
\newtheorem{corollary}[theorem]{Corollary}
\theoremstyle{definition}
\newtheorem{definition}[theorem]{Definition}
\theoremstyle{remark}
\newtheorem*{remark}{Remark}
\newtheorem*{remarks}{Remarks}
\newcommand{\N}{\mathbb{N}}
\newcommand{\Z}{\mathbb{Z}}
\newcommand{\Q}{\mathbb{Q}}
\newcommand{\R}{\mathbb{R}}
\newcommand{\C}{\mathbb{C}}
\newcommand{\dcl}{\operatorname{dcl}}
\newcommand{\rc}{\operatorname{rc}}
\newcommand{\RCF}{\operatorname{RCF}}
\def \lcm {\operatorname{lcm}}
\newcommand{\la}{\mathcal{L}}
\newcommand{\lam}{\mathcal{L}_{\operatorname{o}}}
\newcommand{\Cal}{\mathcal}
\def \<{\langle}
\def \>{\rangle}
\def \((  {(\!(}
\def \)) {)\!)}
\def \gr{\operatorname{gr}}
\numberwithin{equation}{section}
\def\Ind#1#2{#1\setbox0=\hbox{$#1x$}\kern\wd0\hbox to 0pt{\hss$#1\mid$\hss}
\lower.9\ht0\hbox to 0pt{\hss$#1\smile$\hss}\kern\wd0}
\def\Notind#1#2{#1\setbox0=\hbox{$#1x$}\kern\wd0\hbox to 0pt{\mathchardef
\nn=12854\hss$#1\nn$\kern1.4\wd0\hss}\hbox to
0pt{\hss$#1\mid$\hss}\lower.9\ht0 \hbox to
0pt{\hss$#1\smile$\hss}\kern\wd0}
\begin{document}
\bibliographystyle{plain}
\author{Ayhan G\"{u}nayd{\i}n and Philipp Hieronymi}
\title{The real field with the rational points of an elliptic curve}
\thanks{The second author was funded by Deutscher Akademischer Austausch Dienst.}

\address{Centro de Matem\'{a}tica e Aplica\c{c}\~{o}es Fundamentais,
Av. Prof. Gama Pinto, 2,
1649-003 Lisboa, Portugal}

\address{Department of Mathematics \& Statistics,
McMaster University, 1280 Main Street West
Hamilton, ON, L8S 4K1, Canada}

\email{ayhan@ptmat.fc.ul.pt}

\email{P@hieronymi.de}

\date{\today}
\subjclass[2000]{03C10, 03C64, 14H52, 11U09}

\begin{abstract}
 We consider the expansion of the real field by the group of rational points of an elliptic curve over the rational numbers. We prove a completeness result, followed by a quantifier elimination result. Moreover we show that open sets definable in that structure are semialgebraic.
\end{abstract}

\maketitle

\section{Introduction}

\noindent
Here we study the expansion of the real field by the set, $\Cal C$, of pairs $(x,y)\in\Q^2$ such that
$$y^2=x^3+ax+b,$$
with $a,b\in\Q$ such that $4a^3+27b^2\neq 0$.

\medskip\noindent
We consider $\big(\R,\Cal C\big)$ as a structure in the language $\lam(P)$ extending the language $\lam=\{0,1,+,\cdot,<\}$ of ordered rings by a binary relation symbol $P$. Our main result is the following.
\begin{theorem}\label{nearMC}
Every subset of $\R^s$ definable in the structure $(\R,\Cal C)$ is defined by a boolean combination of formulas of the form
$$\exists y_1\cdots\exists y_{2n}\big[\bigwedge_{j=1}^{n}P(y_{2j-1},y_{2j})\wedge\phi(x,y)\big],$$
where $y$ denotes the tuple $(y_1,\dots,y_{2n})$, $x$ is an $s$-tuple of distinct variables and $\phi(x,y)$ is a quantifier-free $\lam$-formula.
\end{theorem}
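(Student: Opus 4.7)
The plan is to adapt to this elliptic setting the framework that van den Dries and G\"unaydin developed for expansions of the real field by a ``small'' multiplicative subgroup. The theorem is a near model-completeness statement: every definable set is a boolean combination of formulas in which the predicate $P$ occurs only inside a single innermost block of existentials. The arithmetic backbone of the argument consists of two inputs. Mordell's theorem tells us that $\Cal C$ is a finitely generated abelian group under the elliptic group law on $E$, and Mordell--Lang for elliptic curves says that for every algebraic subvariety $V\subseteq E^n$ the set $V\cap\Cal C^n$ is a finite union of cosets of subgroups of $\Cal C^n$. Both the elliptic group law and its torsion are defined over $\Q$ by rational functions, hence are $\lam$-definable, so the coset description coming from Mordell--Lang is available inside $(\R,\Cal C)$.

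The first substantive step is a type-determination lemma: for every tuple $\bar c$ from $\Cal C$ and every set $A\subseteq\R$ of parameters, the $\lam(P)$-type $\tp(\bar c/A)$ is determined by the quantifier-free $\lam$-type of $\bar c$ over $A$ together with the information which cosets of subgroups of $\Cal C^n$ the tuple $\bar c$ lies in. Mordell--Lang supplies one half: every algebraic incidence among the components of $\bar c$ is forced by some coset membership. A density statement supplies the other half: a generic point of a coset can be approximated arbitrarily closely in the Euclidean topology by other points of $\Cal C$ lying in the same coset.

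To pass from the lemma to the theorem, I would analyze an arbitrary $\lam(P)$-formula by prenexing it and collecting all $P$-atoms into a single innermost existential block; this rewrites the formula as an outer sequence of quantifiers applied to a subformula already of the stated shape. The type-determination lemma then allows every universal quantifier over a $\Cal C$-tuple to be replaced, inside a fixed quantifier-free $\lam$-type and a fixed coset pattern, by a boolean combination of existentials of the stated shape: the Mordell--Lang decomposition makes the outer $\forall$ range effectively over finitely many coset patterns, and within each pattern the remaining existential merges into the innermost block.

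The main obstacle is the type-determination lemma. While Mordell--Lang gives the algebraic coset decomposition, what is needed model-theoretically is a realization statement: any abstract candidate type consistent both with a quantifier-free $\lam$-type and with a prescribed pattern of coset memberships must be realized in $(\R,\Cal C)$. Proving this requires marrying the geometric finiteness theorem with a topological density analysis of how $\Cal C$ sits inside $E(\R)$. Because the elliptic group law is considerably more involved than ordinary multiplication, even after the correct analogue of the Mann property is isolated, its verification demands careful handling of torsion and of the interplay between the algebraic structure on $E$ and the real-analytic structure on $E(\R)$.
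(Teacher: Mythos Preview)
Your arithmetic inputs are the right ones, and your type-determination lemma is morally the heart of the matter. But the proposal has two genuine gaps.

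First, your lemma only covers tuples $\bar c$ from $\Cal C$. To obtain near model-completeness you must also control the type of an \emph{arbitrary} real $a$ over parameters that include elements of $\Cal C$. When $a$ is transcendental over $\Q(\Cal C)$, the decisive fact is that $\Cal C$ is \emph{small} in the real field: no $\lam$-definable map sends $\Cal C^n$ onto an interval. The paper deduces smallness from Mordell--Lang via a stability-theoretic argument (an infinite field is not interpretable in an abelian group), and then uses it in the transcendental case of a back-and-forth to find, in a saturated model, a realization of the correct cut that remains transcendental over the group. Without smallness nothing prevents $P$ from encoding new algebraic relations among arbitrary reals, and your reduction to coset patterns does not suffice.

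Second, the step ``prenex and collect all $P$-atoms into a single innermost existential block'' is not a valid syntactic reduction: prenexing pushes quantifiers outward while the $P$-atoms sit at the quantifier-free level, still entangled with negations and with quantifiers ranging over all of $\R$, not over $\Cal C$. The paper instead adds a predicate $P_\phi$ for every formula of the target shape and proves full quantifier elimination in this expanded language by constructing a back-and-forth system between saturated models of an explicit axiomatization $T(\Gamma)$. The system has three cases (element in $\pi(G)$; element algebraic over the group; element transcendental over the group), and your type-determination lemma corresponds essentially to what the first two cases establish. The third case, driven by smallness, is absent from your outline and is exactly what lets one handle an outer quantifier over $\R$.

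Two technical points you should also be aware of. The ambient real curve $\Delta^*$ can have two connected components, while the general framework requires the definable group to be t-connected; the paper first passes to the identity component, a finite-index step that makes $(\R,\Cal C)$ and $(\R,\Cal C\cap H)$ existentially interdefinable. And when $\Cal E$ has complex multiplication by $\tau$, algebraic subgroups of $\Cal E^n$ are kernels of $\Z[\tau]$-linear maps rather than $\Z$-linear ones; an extra lemma (the $\tau$-image of $\Cal E(\R)$ meets $\Cal E(\R)$ only in finitely many points) is needed before Faltings' theorem yields the $\Z$-coset description your argument assumes.
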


\noindent
As a by-product of our techniques, we also axiomatize the first order theory of $(\R,\Cal C)$ (see Theorem \ref{completeness-elliptic}).

\medskip\noindent
One of our motivations for studying $(\R,\Cal C)$ is to understand the open definable sets in the sense of \cite{opencore}.  In the last section we prove the following.

\begin{theorem}\label{opencore-elliptic}
 Let $U\subseteq\R^s$ be an open set definable in $(\R,\Cal C)$. Then $U$ is semialgebraic.
\end{theorem}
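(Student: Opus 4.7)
The plan is to use Theorem~\ref{nearMC} to reduce $U$ to a Boolean combination of basic sets, and then to replace the countable set $\Cal C$ by its Euclidean closure $\Gamma \subseteq \R^2$, which is semialgebraic by the Mordell-Weil theorem. Concretely, by Theorem~\ref{nearMC}, $U$ is a Boolean combination of sets
\[
S_\phi := \{x \in \R^s : \exists y_1, \ldots, y_n \in \Cal C \ \phi(x, y_1, \ldots, y_n)\},
\]
with $\phi$ a quantifier-free $\lam$-formula (identifying each $y_i$ with a pair in $\R^2$ lying on $\Cal C$). Since $\Cal C$ is a finitely generated subgroup of the compact real Lie group $E(\R)$, its Euclidean closure $\Gamma$ is a closed semialgebraic subgroup of $E(\R)$, and $\Cal C$ is dense in $\Gamma$. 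Setting
\[
\tilde S_\phi := \{x \in \R^s : \exists y_1, \ldots, y_n \in \Gamma \ \phi(x, y_1, \ldots, y_n)\},
\]
we obtain a semialgebraic superset of $S_\phi$.

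The main technical step is to show that $\tilde S_\phi \setminus S_\phi$ has empty interior in $\R^s$. Suppose for contradiction $x_0$ is an interior point of this difference, with a witness $y^* \in \Gamma^n$ for $x_0 \in \tilde S_\phi$. Apply semialgebraic cell decomposition to $Z := \{(x, y) \in \R^s \times \Gamma^n : \phi(x, y)\}$. Because $\tilde S_\phi$ contains a neighborhood of $x_0$, some cell of $Z$ meeting $(x_0, y^*)$ (or its closure) projects surjectively onto a neighborhood of $x_0$ in $\R^s$. Using semialgebraic definable choice, one obtains a continuous deformation of the witness $y^*$ within $\Gamma^n$, staying in $Z$ and projecting into a neighborhood of $x_0$; density of $\Cal C$ in $\Gamma$ then produces witnesses in $\Cal C^n$ for $x$ arbitrarily close to $x_0$, contradicting $x_0 \in \operatorname{int}(\tilde S_\phi \setminus S_\phi)$.

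Granting this, write $U = F(S_{\phi_1}, \ldots, S_{\phi_k})$ for some Boolean operation $F$ and set $V := F(\tilde S_{\phi_1}, \ldots, \tilde S_{\phi_k})$, a semialgebraic set. The symmetric difference $U \triangle V$ is contained in $\bigcup_i (\tilde S_{\phi_i} \setminus S_{\phi_i})$, so has empty interior. Since $U$ is open and $V$ is semialgebraic, $U \setminus V$ must lie in the frontier $\partial V$, a semialgebraic set of dimension $< s$; similarly $V \setminus U$ has empty interior. A topological argument---replacing $V$ by $\operatorname{int}(\overline{V})$ and adjusting along the lower-dimensional semialgebraic boundary $\partial V$---identifies $U$ with a semialgebraic set.

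The main obstacle is the density lemma for $\tilde S_\phi \setminus S_\phi$. A naive density argument fails when $y^*$ lies on the relative boundary of the fiber $\{y \in \Gamma^n : \phi(x_0, y)\}$ inside $\Gamma^n$, since witnesses cannot be freely perturbed there. The remedy is to use the cell decomposition of $Z$ to first move $(x_0, y^*)$ to a nearby $(x_0', y^{**})$ on an open cell of $Z$ before invoking the density of $\Cal C$ in $\Gamma$---a delicate but essentially standard maneuver in the analysis of expansions of o-minimal structures by dense subgroups.
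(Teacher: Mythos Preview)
Your approach has two genuine gaps, and the first is fatal.

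\textbf{The density lemma is false.} You claim that $\tilde S_\phi \setminus S_\phi$ always has empty interior, but consider $\phi(x,y)\equiv(\pi_1(y)=c)$ for any fixed $c\in\pi_1(\Gamma)\setminus\pi_1(\Cal C)$ (such $c$ exist since $\Cal C$ is countable and $\Gamma$ is not). Then $S_\phi=\emptyset$ while $\tilde S_\phi=\R^s$, so the difference is all of $\R^s$. In your own sketch, this is exactly the case where the relevant cell of $Z$ has the form $\R^s\times\{y^*\}$: there is no room to deform $y^*$ inside $\Gamma^n$, so density of $\Cal C$ in $\Gamma$ buys you nothing. Your proposed ``remedy'' of moving to a nearby open cell cannot help, because the $y$-fibre of $Z$ is zero-dimensional. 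The same phenomenon occurs without parameters: take $\phi(x,y)\equiv(\pi_1(y)^2=2)$. The underlying issue is that mere density of $\Cal C$ in $\Gamma$ is far too weak; one needs to control which semialgebraic subsets of $\Gamma^n$ actually meet $\Cal C^n$, and this is precisely where the Mordell-Lang property (beyond the quantifier elimination you already invoked) is required.

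\textbf{The final topological step is also invalid.} Even granting that $U\triangle V$ has empty interior with $U$ open and $V$ semialgebraic, $U$ need not be semialgebraic. In $\R^2$ take $V=\{(x,y):y\neq 0\}$ and $U=V\cup(O\times\{0\})$ for any open non-semialgebraic $O\subseteq\R$; then $U$ is open, $U\triangle V=O\times\{0\}$ has empty interior, yet $U$ is not semialgebraic. Your vague ``adjusting along $\partial V$'' cannot close this gap.

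The paper proceeds quite differently. In addition to the quantifier elimination, it establishes (Corollary~\ref{induced-corollary}) that the \emph{closure} of any $\la(P)$-definable subset of $P^n$ is already $\la$-definable over the same parameters --- this is where the Mordell-Lang property is used a second time, via the description of the induced structure on $G$. With these two ingredients it invokes a general criterion from \cite{opencore} reducing the open-core statement to one-variable open sets and one-variable cofinitely continuous functions, and handles those by a careful conjunctive-normal-form decomposition (Lemma~\ref{lboolcomb}, Propositions~\ref{unaryopen} and \ref{unaryfunction}) that separates ``interval'' contributions from ``small image'' contributions.
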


\medskip\noindent
We prove Theorem~\ref{nearMC} and Theorem~\ref{opencore-elliptic} for a broader class of structures than the ones in the statements. Namely we study $(\R,\Gamma)$, where $\Gamma\subseteq \R^m$ is a dense subgroup of a one dimensional connected group definable in $\R$, satisfying a number theoretic property. The details of the setting can be found in Section \ref{ML-property}. In Section \ref{modeltheory}, we show that the conclusion of Theorem \ref{nearMC} holds for these structures.
The reader would notice that $\Cal C$, considered as a subset of the projective plane $\mathbb P^2(\C)$, becomes the group of rational points of an elliptic curve after adding a point at infinity. We explain this thoroughly in Section \ref{ellipticcurve} and using this we illustrate how the structure $(\R,\mathcal C)$ fits into the more general framework.

\medskip\noindent
The current paper is not the first attempt to treat such structures. For instance, Zilber studied the real field expanded by the group of roots of unity in \cite{zilber2} and later
Belegradek and Zilber generalized the results of that paper to the real field expanded by a subgroup of the unit circle, of finite rank in \cite{BZ}. The first author of the current paper studied similar structures in \cite{thesis_ayhan} with an approach different than the one in \cite{BZ}. However neither \cite{BZ} nor \cite{thesis_ayhan} prove anything about the structure of open definable sets. Since we prove our theorems in the generality of Section \ref{ML-property}, we were able to get some results in that direction: Let $$\mathbb S:=\{(x,y)\in\R^2:x^2+y^2=1\}$$ be the unit circle and let $\Gamma$ be a finite rank subgroup of $\mathbb S$; that is  $\Gamma$ is contained in the divisible closure of a finitely generated subgroup of $\mathbb{S}$. Now the statement analogous to Theorem~\ref{opencore-elliptic} in this setting is as follows.

\begin{theorem}\label{opencore-circle}  Let $U\subseteq\R^s$ be an open set definable in $(\R,\Gamma)$. Then $U$ is semialgebraic.
\end{theorem}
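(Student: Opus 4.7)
The plan is to deduce Theorem~\ref{opencore-circle} from the general analogue of Theorem~\ref{opencore-elliptic} established in Section~\ref{modeltheory}. The unit circle $\mathbb{S}$, equipped with the multiplication inherited from $\C^{*}$, is a one-dimensional connected commutative group definable in $\R$, so $(\R,\Gamma)$ will fall within the framework of Section~\ref{ML-property} once two conditions are verified: density of $\Gamma$ in $\mathbb{S}$, and the Mann-type number-theoretic property assumed there.

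First I would dispose of the non-dense case. The closed subgroups of $\mathbb{S}$ are exhausted by $\mathbb{S}$ itself and the finite cyclic groups of roots of unity. Thus if $\Gamma$ is not dense in $\mathbb{S}$, then $\overline{\Gamma}$ is finite, and hence $\Gamma$ itself is finite. In that case $(\R,\Gamma)$ is a definitional expansion of the ordered field $\R$ by finitely many constants, so \emph{every} definable set, open or not, is semialgebraic and the conclusion is immediate.

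Assume now that $\Gamma$ is dense in $\mathbb{S}$. I would next verify the Mann-type property required in Section~\ref{ML-property}: for any rational coefficients $a_1,\dots,a_n$, the equation $a_1 g_1+\cdots+a_n g_n=1$ has only finitely many non-degenerate solutions $(g_1,\dots,g_n)\in \Gamma^n$. Because $\Gamma$ sits as a finite rank subgroup of $\C^{*}$, this follows from the Evertse--Schlickewei--Schmidt theorem on $S$-unit equations for finitely generated subgroups of $\C^{*}$; the passage from finitely generated to finite rank (i.e.\ to the divisible closure of a finitely generated group) is routine, and the inherited property for the subgroup $\Gamma \subseteq \mathbb{S} \subseteq \C^{*}$ is exactly the hypothesis of Section~\ref{ML-property}.

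With density and the Mann property in hand, the structure $(\R,\Gamma)$ lies in the class treated in Section~\ref{modeltheory}, and the general version of Theorem~\ref{opencore-elliptic} proved there applies verbatim to yield that every open subset of $\R^{s}$ definable in $(\R,\Gamma)$ is semialgebraic. The main technical point to be careful about is matching the precise formulation of the Mann-type hypothesis in Section~\ref{ML-property} (phrased for a subgroup of a group definable in $\R$) with the $S$-unit statement for $\C^{*}$, but since $\mathbb{S}\subseteq \C^{*}$ and the Mann property is inherited by subgroups, this is essentially bookkeeping rather than a genuine obstacle.
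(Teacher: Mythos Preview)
Your overall strategy is exactly the paper's: dispose of the finite case, then in the infinite (hence dense) case verify the number-theoretic hypothesis of Section~\ref{ML-property} and invoke the general open-core result (which, incidentally, is Corollary~\ref{opencore-general} in Section~\ref{opencoresection}, not Section~\ref{modeltheory}).

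There is, however, a genuine mismatch in the middle step. The hypothesis of Section~\ref{ML-property} is the \emph{Mordell--Lang property}: for every polynomial $p\in\Q[X_1,\dots,X_{2n}]$, the set $V(p)\cap\Gamma^n$ is a finite union of cosets of the subgroups $T_k\cap\Gamma^n$. What you verify instead is the \emph{Mann property}: linear equations $a_1g_1+\cdots+a_ng_n=1$ have only finitely many non-degenerate solutions in $\Gamma^n$. These are not the same statement, and your claim that the Mann property ``is exactly the hypothesis of Section~\ref{ML-property}'' is incorrect. The paper does not use the Mann property at all; it cites Laurent's theorem on exponential diophantine equations, which gives the Mordell--Lang conclusion for finite-rank subgroups of $\C^*$ directly.

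The gap is recoverable: for multiplicative groups one can deduce Mordell--Lang from the Mann/unit-equation result by rewriting a polynomial equation $\sum_I a_I z^I=0$ as a linear equation in the monomials $w_I=z^I\in\Gamma$, applying the Mann property, and handling degenerate subsums by induction. But this is a real argument, not ``bookkeeping'', and you would also need to check that the resulting cosets are of the required form $T_k\cap\Gamma^n$ with $k\in\Z^n$ (i.e.\ given by $\Z$-characters of $\mathbb{S}$ rather than arbitrary subgroups). Citing Laurent directly, as the paper does, avoids all of this.
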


\bigskip\noindent
{\it Conventions and notations. }Above and in the rest of the paper $m,n,s,t$ always denote natural numbers. Also as usual `definable' means `definable with parameters' and when we want to make the language and the parameters explicit we write $\Cal L$-$B$-{\it definable} to mean definable in the appropriate $\Cal L$-structure using parameters from the set $B$.

\smallskip\noindent
The real closure of an ordered field $K$ is denoted by $K^{\rc}$.

\smallskip\noindent
We denote the graph of a function $f:X\to Y$ by
$\gr(f)$.

\medskip\noindent
{\it Acknowledgements. } We thank L. van den Dries, C. Ealy, C. Miller, J. Ramakrishnan, S. Starchenko and B. Zilber for useful discussions on the topic. We also thank The Fields Institute for hospitality during the `Thematic Program on O-minimal Structures and Real Analytic Geometry'; most of this paper was written during that period.

\section{The Mordell-Lang property}\label{ML-property}
\noindent
Throughout this section $K$ denotes a real closed field and $(\mathbb A,\oplus)$ is a one-dimensional group definable in $K$; that is $\mathbb A\subseteq K^m$ and $\oplus:K^m\times K^m\to K^m$ are definable in $K$ such that $\mathbb A$ is of dimension one and $\oplus|(\mathbb A\times\mathbb A)$ is a group operation on $\mathbb A$. (Here and below we do not make any distinction between an ordered field and its underlying set.)

\medskip\noindent
By Proposition 2.5 of \cite{Pillay}, there is a topology on $\mathbb A$ definable in $K$ such that $\mathbb A$ becomes a topological group. We will refer to this topology as the \emph{t-topology}. Further, a subset of $\mathbb A$ is called \emph{t-dense} (respectively {\it t-connected, t-compact}) if it is dense (respectively connected, compact) in the t-topology. A function $f: \mathbb{A}^n \to \mathbb{A}$ is said to be {\it t-continuous} if it is continuous with respect to the t-topology.

\medskip\noindent
By Claim I in the proof of Proposition 2.5 in \cite{Pillay}, the t-topology agrees with the topology induced from $K^m$ except for finitely many points; that is there is a finite subset $X$ of $\mathbb A$ such that the topologies on $\mathbb A\setminus X$ induced from $\mathbb A$ and $K^m$ are equivalent.

%

\medskip\noindent
In \cite{Pillay}, it is proven that $\mathbb A$ must be abelian-by-finite. By Theorem 1.1 of \cite{Edmundo-Otero} we also know that $\mathbb A$ has finitely many $n$-torsion elements for each $n>0$.

\medskip\noindent
{\it Throughout the rest of the paper, we assume that $(\mathbb A,\oplus)$ is t-connected.}

\medskip\noindent
By Proposition 2.12 of \cite{Pillay} it follows that $\mathbb A$ does not have any proper infinite definable subgroup. Combining this with the previous paragraph we get that $\mathbb{A}$ is abelian and divisible.

\medskip\noindent
Note that the t-connectedness assumption is not very restrictive, because every group definable in $K$ is a finite union of cosets of its t-connected component.

\medskip\noindent
Let $\pi_1,\dots,\pi_m$ be the standard projections of $K^m$ onto $K$. For our purposes it is harmless to assume that there is $i\in\{1,\dots,m\}$ such that for every $a\in\mathbb A$ and $j=1,\dots,m$ we have that $\pi_j(a)$ is $\lam$-definable over $\pi_i(a)$. Moreover we take $i$ to be $1$ and we sometimes write $\pi$ instead of $\pi_1$.

\medskip\noindent
For convenience we also assume that $\mathbb A$ is definable over $\emptyset$ and for another real closed field $E$, we let $(\mathbb A(E),\oplus)$ denote the group definable in $E$ by the formulas defining $(\mathbb A,\oplus)$ in $K$.

\medskip\noindent
Let $k=(k_1,\dots,k_n)$ be a tuple of integers. Consider the group character
$$\chi_k:\mathbb A^n\to\mathbb A,\quad \chi_k(a_1,\dots,a_n):=k_1a_1\oplus\cdots\oplus k_na_n$$
and let $T_k$ denote the kernel of $\chi_k$.

\medskip\noindent

\medskip\noindent
Fix $n>0$ and a tuple of distinct indeterminates $X=(X_1,\dots,X_{mn})$. We usually denote an element of the polynomial ring $K[X]$ by $p$ (possibly with subscripts) and if we want to make the variables precise, we write $p(X)$. In what follows we identify $K^{mn}$ with $(K^m)^n$. In particular, for $\alpha_1,\dots,\alpha_n\in K^m$ and a polynomial $p\in K[X]$, $p(\alpha_1,\dots,\alpha_n)$ means $$p\big(\pi_1(\alpha_1),\pi_2(\alpha_1),\dots,\pi_m(\alpha_1),\dots,\pi_1(\alpha_n),\dots,\pi_m(\alpha_n)\big).$$
In a similar fashion, for a subfield $L$ of $K$ and a subset $S$ of $\mathbb A$, $L(S)$ denotes the subfield
$L\big(\pi_1(S)\cup\cdots\cup\pi_m(S)\big)$
of $K$ and  $L[S]:=L\big[\pi_1(S)\cup\cdots\cup\pi_m(S)\big].$

\medskip\noindent
Finally for a polynomial $p$ as above we put
$$V(p):=\{\alpha\in K^{mn}: p(\alpha)=0\},$$
the {\it zero set of} $p$ (in $K$).

\medskip\noindent
{\it In the rest of this section $L$ is a subfield of $K$ and $G$ is a subgroup of $\mathbb A$.}

\begin{definition}
We say that $G$ has the {\it Mordell-Lang property over $L$} if for every $n>0$ and for every polynomial $p\in L[X]$, there are $g_1,\dots,g_t\in G^n$ and $k_1,\dots,k_t\in\Z^n$  such that
$$V(p)\cap G^n=\bigcup_{i=1}^{t}g_i\oplus(T_{k_i}\cap G^n).$$

\end{definition}
\noindent
The reason for calling this property with this name is that it is the conclusion of a conjecture of Lang generalizing a conjecture of Mordell for abelian varieties. We refer the reader to \cite{NTIII} for the precise statement of the conjecture and its history.

\medskip\noindent
We proceed to show that if $G$ has the Mordell-Lang property over $\Q$, then it has the Mordell-Lang property over $K$.

\begin{lemma}
 Let $L$ contain $\Q(G)$ and suppose that $G$ has the Mordell-Lang property over $L$. Then $G$ has the Mordell-Lang property over $L^{\rc}$.
\end{lemma}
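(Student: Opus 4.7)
The plan is to reduce any polynomial with coefficients in $L^{\rc}$ to a polynomial with coefficients in $L$ whose zero set on $G^n$ is the same. Given $p\in L^{\rc}[X]$, its finitely many coefficients generate a finite algebraic extension $L'$ of $L$. I fix an $L$-basis $e_1,\dots,e_d$ of $L'$ and expand $p=\sum_{j=1}^{d}e_j p_j$ with each $p_j\in L[X]$.

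The first key observation is that for any $\alpha\in G^n$, every coordinate $\pi_i(\alpha_l)$ lies in $\Q(G)\subseteq L$, so $p_j(\alpha)\in L$ for all $j$. Since $e_1,\dots,e_d$ are $L$-linearly independent inside $L^{\rc}$, this forces $p(\alpha)=0$ if and only if $p_j(\alpha)=0$ for every $j$; hence $V(p)\cap G^n=\bigcap_{j=1}^{d}V(p_j)\cap G^n$. The second key observation is that because $K$ is real closed and each $p_j(\alpha)\in K$, setting $q:=\sum_{j=1}^{d}p_j^2\in L[X]$ gives $V(q)\cap G^n=\bigcap_{j=1}^{d}V(p_j)\cap G^n=V(p)\cap G^n$, since in a real closed field a sum of squares vanishes only when each summand does. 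This sum-of-squares trick is what lets us avoid having to worry about whether unions of cosets of subgroups $T_k\cap G^n$ are closed under finite intersection, because the intersection of hypersurfaces has been packaged back into a single hypersurface cut out over $L$.

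Finally, since $q\in L[X]$, the assumed Mordell--Lang property of $G$ over $L$ supplies $g_1,\dots,g_t\in G^n$ and $k_1,\dots,k_t\in\Z^n$ with $V(q)\cap G^n=\bigcup_{i=1}^{t}g_i\oplus(T_{k_i}\cap G^n)$, and this is also the required decomposition for $V(p)\cap G^n$. The main point that deserves care is the identification $p_j(\alpha)\in L$, which is exactly where the hypothesis $\Q(G)\subseteq L$ is used: without it one could not split the equation $p(\alpha)=0$ into the system $p_j(\alpha)=0$ via $L$-linear independence of $e_1,\dots,e_d$.
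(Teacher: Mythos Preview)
Your proof is correct and follows essentially the same approach as the paper: decompose $p$ along an $L$-basis of the finite extension generated by its coefficients, use $\Q(G)\subseteq L$ to see that the $L$-linear independence of the basis forces each component to vanish on $G^n$, and then package the resulting system into a single sum-of-squares polynomial over $L$. The only cosmetic difference is that the paper adjoins one algebraic element $\alpha$ at a time (using the power basis $1,\alpha,\dots,\alpha^{d-1}$) and implicitly iterates, whereas you handle the full finite extension $L'$ in a single step.
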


\begin{proof}
Let $\alpha\in K$ be algebraic over $L$ of degree $d>1$. It suffices to show that $G$ has the Mordell-Lang property over $L(\alpha)$. Take a polynomial $p\in L[\alpha][X]$.  Write $$p=p_0+p_1\alpha +\cdots+p_{d-1}\alpha^{d-1},$$
where $p_i\in L[X]$ for $i=0,1,\dots,d-1$. Then for $g=(g_1,\dots,g_n)\in G^n$
$$p(g)=0\Longleftrightarrow p_i(g)=0\text{ for each }i\in\{0,1,\dots,d-1\}.$$
Therefore
$$V(p)\cap G^n=\bigcap_{i=0}^{d-1} V(p_i)\cap G^n=V(p_0^2+\dots+p_{d-1}^2)\cap G^n.$$
By the Mordell-Lang property over $L$, we know that $V(p_0^2+\dots+ p_{d-1}^2)\cap G^n$ is a finite union of cosets of kernels of $\chi_k$ in $G^n$, thus so is $V(p)\cap G^n$.
\end{proof}

\medskip\noindent
We need the following notation in the next step: For $s\in\N$ and a tuple $i=\big(i(1),\dots,i(s)\big)\in\N^s$, $|i|$ denotes $i(1)+\dots+i(s)$, and for a tuple $Y=(Y_1,\dots,Y_s)$ of distinct indeterminates, $Y^{i}$ is the monomial $$Y_1^{i(1)}Y_2^{i(2)}\cdots Y_s^{i(s)}.$$
Likewise for $\alpha=(\alpha_1,\dots,\alpha_s)\in K^s$, $\alpha^{i}$ means $\alpha_1^{i(1)}\alpha_2^{i(2)}\cdots\alpha_s^{i(s)}$.

\begin{lemma}
 Let $G$ have the Mordell-Lang property over $\Q$. Then $G$ has the Mordell-Lang property over $\Q(G)$.
\end{lemma}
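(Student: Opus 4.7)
\medskip\noindent
The plan is to absorb the coefficients of $p$ into additional variables ranging over $G$, reducing the claim to the Mordell-Lang property over $\Q$ applied on a larger power of $G$.

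\medskip\noindent
First I would pick $p\in\Q(G)[X]$ with $X=(X_1,\dots,X_{mn})$. Since $\Q(G)=\Q\big(\pi_1(G)\cup\cdots\cup\pi_m(G)\big)$, the finitely many coefficients of $p$ involve only finitely many of these projections, so I can choose $h_1,\dots,h_s\in G$ and polynomials $U_j,D\in\Q[Y]$ in $Y=(Y_1,\dots,Y_{ms})$ (partitioned into $s$ blocks of $m$ variables matching $G\subseteq K^m$) such that $h:=(h_1,\dots,h_s)\in G^s$ satisfies $D(h)\neq 0$ and the coefficient of $X^j$ in $p$ equals $U_j(h)/D(h)$. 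Setting $P(Y,X):=\sum_j U_j(Y)X^j\in\Q[Y,X]$, the identity $D(h)p(X)=P(h,X)$ together with $D(h)\neq 0$ yields
$$V(p)\cap G^n=\{g\in G^n:P(h,g)=0\},$$
so $V(p)\cap G^n$ is the ``slice at $h$'' of $V(P)\cap G^{s+n}$.

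\medskip\noindent
Next I would apply the Mordell-Lang property of $G$ over $\Q$ to $P$ and write
$$V(P)\cap G^{s+n}=\bigcup_{i=1}^{t}g_i\oplus\big(T_{k_i}\cap G^{s+n}\big)$$
for some $g_i\in G^{s+n}$ and $k_i\in\Z^{s+n}$. Splitting $g_i=(g_i',g_i'')\in G^s\times G^n$ and $k_i=(k_i',k_i'')\in\Z^s\times\Z^n$, the condition $(h,g)\in g_i\oplus T_{k_i}$ unfolds to $\chi_{k_i'}(h\ominus g_i')\oplus\chi_{k_i''}(g\ominus g_i'')=0$, which (using that $\chi_{k_i''}$ is a group homomorphism) rearranges to $\chi_{k_i''}(g)=c_i$ for the fixed element $c_i:=\chi_{k_i''}(g_i'')\ominus\chi_{k_i'}(h\ominus g_i')\in G$. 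The set $\{g\in G^n:\chi_{k_i''}(g)=c_i\}$ is then either empty or equals $g_i^*\oplus(T_{k_i''}\cap G^n)$ for any chosen solution $g_i^*\in G^n$ (the degenerate case $k_i''=0$ is covered since $T_0\cap G^n=G^n$, and the slice is either all of $G^n$ or empty according as $c_i=0$ or $c_i\neq 0$). Taking the union over those $i$ with nonempty slice produces the required Mordell-Lang decomposition of $V(p)\cap G^n$.

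\medskip\noindent
The conceptual heart of the argument is the reduction in the first step: introducing auxiliary variables $Y$ for the coefficient-data of $p$ and clearing denominators converts the problem into the Mordell-Lang property over $\Q$ applied to the single polynomial $P$. The only point that needs mild care is the slicing computation, namely checking that a slice of a coset of $T_k\cap G^{s+n}$ at a fixed tuple of $G^s$ really is a coset of $T_{k''}\cap G^n$ (or empty); I expect this to be the main but still routine obstacle.
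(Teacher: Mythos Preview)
Your argument is correct and follows essentially the same route as the paper: introduce auxiliary variables for the $G$-data in the coefficients, form a single polynomial over $\Q$ in the enlarged set of variables, and apply the Mordell-Lang property over $\Q$ on the larger power of $G$. Your version is in fact more explicit than the paper's on two points---you clear denominators to pass from $\Q(G)$ to $\Q[G]$ (the paper silently starts with $p\in\Q[G][X]$), and you spell out the slicing computation that the paper compresses into ``Now the result follows''---but the underlying idea is identical.
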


\begin{proof}
Take a polynomial $p\in \Q[G][X]$ of degree $d$. Write
$$p=\sum_{|i|\leq d}\sum_{j}a_{i,j}\, g^{j}X^{i},$$
where $i$ and $j$ run through elements of $\N^{mn}$ and $\N^{mt}$ respectively, $a_{i,j}\in\Q$, and $g=(g_1,\dots,g_t)\in G^t$.

Let $Y=(Y_1,\dots,Y_{mt})$ be a tuple of indeterminates different than $X$ and put $q(X,Y)=\sum_{|i|\leq d}\sum_{j}a_{i,j}\, X^{i}Y^{j}\in\Q[X,Y]$.
For $g^*\in G^n$ we have
$$p(g^*)=0\Longleftrightarrow q(g^*,g)=0.$$
Now the result follows since $G$ has the Mordell-Lang property over $\Q$.
\end{proof}

\begin{proposition}
 Let $G$ have the Mordell-Lang property over $\Q$. Then $G$ has the Mordell-Lang property over $K$.
\end{proposition}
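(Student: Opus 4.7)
The plan is to combine the two preceding lemmas (which give the property over $\Q(G)^{\rc}$) with a linear-algebra/specialization argument that handles coefficients from all of $K$. Specifically, set $E:=\Q(G)^{\rc}$; by the previous lemma $G$ has the Mordell-Lang property over $\Q(G)$, and by the lemma before that it then has the Mordell-Lang property over $E$. It therefore suffices to reduce an arbitrary polynomial $p\in K[X]$ to a polynomial in $E[X]$ that defines the same intersection with $G^n$.

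The key observation is that the coordinates of elements of $G$ already lie in $\Q(G)$, hence in $E$; so evaluating a polynomial with coefficients in $E$ at a point of $G^n$ yields an element of $E$. With this in mind, I would write $p=c_1 p_1+\cdots+c_r p_r$, where $p_1,\dots,p_r\in E[X]$ and $c_1,\dots,c_r\in K$ are $E$-linearly independent (choose an $E$-basis of the $E$-span of the coefficients of $p$, and rewrite $p$ accordingly). For any $g\in G^n$ we have $p_i(g)\in E$ for each $i$, and the $E$-linear independence of the $c_i$ gives
$$p(g)=0\;\Longleftrightarrow\;\sum_{i=1}^{r}c_i\,p_i(g)=0\;\Longleftrightarrow\;p_i(g)=0\ \text{for all }i=1,\dots,r.$$
Exactly as in the proof of the first lemma, this intersection condition is captured by a single polynomial in $E[X]$, since in the real closed field $K$ one has
$$\bigcap_{i=1}^{r}V(p_i)=V\!\left(p_1^2+\cdots+p_r^2\right).$$
Therefore $V(p)\cap G^n=V(p_1^2+\cdots+p_r^2)\cap G^n$, and since $p_1^2+\cdots+p_r^2\in E[X]$ the Mordell-Lang property over $E$ finishes the argument.

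The routine steps are the two lemma applications; the main (though still elementary) obstacle is the reduction from $K$-coefficients to $E$-coefficients. The crucial fact making it work is that $G\subseteq \mathbb{A}\subseteq K^m$ has all its coordinates in $\Q(G)\subseteq E$, so evaluation at $G^n$ stays inside $E$, allowing us to exploit $E$-linear independence of the chosen basis $c_1,\dots,c_r$ to separate $p$ into its $E$-components. Without this coordinate containment, one could not conclude that $p_i(g)\in E$ and the linear-independence step would collapse.
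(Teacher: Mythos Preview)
Your proof is correct and follows the same strategy as the paper: use the two preceding lemmas to obtain the Mordell--Lang property over $E:=\Q(G)^{\rc}$, then exploit the fact that evaluation at points of $G^n$ lands in $E$ to split an arbitrary $p\in K[X]$ into $E$-components. The only difference is cosmetic: the paper picks a transcendence basis $\alpha_1,\dots,\alpha_t$ of the coefficient field over $\Q(G)^{\rc}$ and expands $p$ in the monomials $\alpha^i$ (which are $E$-linearly independent), whereas you go directly to an $E$-basis $c_1,\dots,c_r$ of the $E$-span of the coefficients; your formulation is marginally cleaner since it avoids any worry about whether the coefficients are polynomial, rational, or algebraic in the transcendence basis.
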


\begin{proof}
Let $E\subseteq K$ be a finitely generated extension of $\Q(G)^{\rc}$, and take a transcendence basis $\alpha=(\alpha_1,\dots,\alpha_t)$ of $E$ over $\Q(G)^{\rc}.$

Take a polynomial $p\in E[X]$, and write
$$p=\sum_{i} p_{i}\alpha^{i},$$
where $i=\big(i(1),\dots,i(t)\big)$ runs through elements of $\N^{t}$ such that $|i|\leq s$ for some $s\in\N$ and  $p_{i}\in\Q(G)^{\rc}[X]$.

Now it is easy to see that for $g\in G^{n}$
$$p(g)=0\Longleftrightarrow p_i(g)=0\text{ for each }i.$$
Hence $V(p)\cap G^n$ is of the desired form since $G$ has the Mordell-Lang property over $\Q(G)^{\rc}$ by the previous two lemmas.
\end{proof}

\medskip\noindent
{\it From now on we assume that $G$ has the Mordell-Lang property over $\Q$.} As a consequence of the proposition above it is harmless to simply say that $G$ has the Mordell-Lang property.

\medskip\noindent
For a subset $S$ of $K[X]$, let
$$V(S):=\bigcap_{p\in S}V(p).$$
Let $C=K(\sqrt{-1})$ be the algebraic closure of $K$ and identify $C$ with $K^2$ in the usual way. We get the following consequence of the Mordell-Lang property.

\begin{corollary}\label{ML-general}
Let $X\subset C^{mn}$ be definable in the field $C$. Then $X\cap G^n$ is definable in the group $(G,\oplus)$.
\end{corollary}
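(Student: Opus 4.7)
The plan is to combine quantifier elimination in the theory of algebraically closed fields with the Mordell--Lang property just established. By $\mathrm{ACF}$-quantifier elimination, $X$ is a boolean combination of sets of the form $V_C(p) := \{z \in C^{mn} : p(z) = 0\}$ with $p \in C[Z_1, \ldots, Z_{mn}]$. First I would split each such $p$ as $p = p_1 + \sqrt{-1}\, p_2$ with $p_1, p_2 \in K[Z]$; since $K$ is real closed (hence formally real), for $z \in K^{mn}$ we have $p(z) = 0$ iff $p_1(z) = p_2(z) = 0$ iff $q(z) = 0$ for $q := p_1^2 + p_2^2 \in K[Z]$. Because $G^n \subseteq K^{mn} \subseteq C^{mn}$, this reduces the claim to showing that $V(q) \cap G^n$ is $(G,\oplus)$-definable for every $q \in K[Z]$, and that $(G,\oplus)$-definability is preserved under boolean combinations.

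Next I would apply the Mordell--Lang property over $K$ (the preceding proposition) to write $V(q) \cap G^n = \bigcup_{i=1}^{t} g_i \oplus (T_{k_i} \cap G^n)$ with $g_i \in G^n$ and $k_i \in \Z^n$. Each kernel $T_{k_i} \cap G^n$ is $\emptyset$-definable in the pure group $(G,\oplus)$ by the formula $k_{i,1} y_1 \oplus \cdots \oplus k_{i,n} y_n = 0$, since scalar multiplication by a fixed integer is $\oplus$-expressible. Translating by the $G^n$-parameter $g_i$ yields $(G,\oplus)$-definability of each coset, hence of $V(q) \cap G^n$, and the boolean combination furnishes the desired $(G,\oplus)$-definition of $X \cap G^n$.

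The only delicate step is the first reduction, where one must legitimately pass from an arbitrary field-definable subset of $C^{mn}$ (with potentially complex parameters) to a boolean combination of $K$-polynomial zero sets in $K^{mn}$. The machinery --- $\mathrm{ACF}$-quantifier elimination together with the sum-of-squares trick afforded by the formal reality of $K$ --- is standard, but it is precisely what enables the appeal to Mordell--Lang. Everything after that is routine bookkeeping: cosets of character kernels are manifestly pure-group definable, and definability in $(G,\oplus)$ is closed under boolean operations.
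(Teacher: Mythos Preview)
Your argument is correct and follows essentially the same route as the paper's proof: reduce via $\mathrm{ACF}$ quantifier elimination to zero sets of polynomials, pass from $C$-coefficients to $K$-coefficients (you do this explicitly via the sum-of-squares trick, whereas the paper leaves it implicit and phrases things in terms of $V(S)$ with Hilbert's Basis Theorem), and then invoke the Mordell--Lang property over $K$ to obtain a $(G,\oplus)$-definable description. The only cosmetic difference is that by working with single polynomials throughout you bypass the appeal to Hilbert's Basis Theorem, which in the paper's formulation is needed to reduce an arbitrary $V(S)$ to finitely many $V(p_i)$.
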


\begin{proof}
By quantifier elimination for algebraically closed fields, it suffices to show that for every $S\subseteq K[X]$, there are $s,t \in \mathbb{N}$ and $g_{i,j}\in G^n$ and $k_{i,j}\in\Z^n$ for $i=1,\dots,s$ and $j=1,\dots,t$ such that
$$V(S)\cap G^n=\bigcap_{i=1}^{s}\bigcup_{j=1}^{t}g_{i,j}\oplus(T_{k_{i,j}}\cap G^n).$$
This easily follows from the Mordell-Lang property combined with Hilbert's Basis Theorem.
\end{proof}

\begin{remark}
 Note that we cannot get this result with $K$ in the place of $C$.  Vaguely speaking, it is not possible to define the trace of ordering on the group $G$.
\end{remark}

\subsection{The main lemma}
We prove an analog of Lemma 5.12 in \cite{vdDG}, which is the most useful consequence of the Mordell-Lang property. We take this opportunity to introduce some more algebraic notations and conventions.

\medskip\noindent
Let $G'$ be a subgroup of $\mathbb A$ containing $G$ and $g',g_1',\dots,g_n'$ elements of $G'$.  We say that $g'$ is {\it algebraic over} $L$ if $\pi(g')$ is algebraic over $L$, and similarly $g_1',\dots,g_n'$ are {\it algebraically dependent over }$L$ if $\pi(g_1'),\dots,\pi(g_n')$ are algebraically dependent over $L$. Also we say that $g_1',\dots,g_n'$ are {\it linearly dependent modulo} $G$ if there is $k\in\Z^n\setminus\{(0,\dots,0)\}$ such that $\chi_k(g_1',\dots,g_n')\in G$.

\medskip\noindent
We say that $G'$ {\it satisfies the same Mordell-Lang conditions as} $G$ if for every polynomial $p\in \Q[X]$
$$V(p)\cap (G')^n=g_1\oplus (T_{k_1}\cap (G')^n)\cup\cdots\cup g_t\oplus (T_{k_t}\cap (G')^n),$$
with $g_1,\dots,g_t\in G^n$ and $k_1,\dots,k_t\in\Z^n$.

\medskip\noindent
Note that if $G'$ satisfies the same Mordell-Lang conditions as $G$, \label{ML-conditions} then $G'$ has the Mordell-Lang property as well.


\begin{lemma}\label{dependence}
 Let $G'$ be a subgroup of $\mathbb A$ containing $G$ and suppose that $G'$ satisfies the same Mordell-Lang conditions as $G$. Then $g_1',\dots,g_n'\in G'$ are linearly dependent modulo $G$ whenever they are algebraically dependent over $\Q(G)$.

%
%
\end{lemma}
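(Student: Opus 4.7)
The plan is to convert the algebraic relation into a polynomial equation over $\Q$, apply the Mordell--Lang property of $G'$ to decompose the zero set into cosets based in $G$, and read off the desired linear relation from the coset that contains our tuple.

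By algebraic dependence there is a nonzero $r\in\Q(G)[Y_1,\dots,Y_n]$ with $r(\pi(g'_1),\dots,\pi(g'_n))=0$. After clearing denominators we may assume $r\in\Q[G][Y]$, and expanding the coefficients in terms of finitely many elements $g=(g_1,\dots,g_t)\in G^t$ exactly as in the proof that Mordell--Lang over $\Q$ passes to $\Q(G)$, we obtain a polynomial $q\in\Q[X,Z]$ in $m(n+t)$ variables with $q(g'_1,\dots,g'_n,g_1,\dots,g_t)=0$. Thus $v:=(g'_1,\dots,g'_n,g_1,\dots,g_t)\in V(q)\cap(G')^{n+t}$, and by the Mordell--Lang hypothesis on $G'$ this set is a finite union of cosets $h^*_i\oplus(T_{k^*_i}\cap(G')^{n+t})$ with $h^*_i\in G^{n+t}$ and $k^*_i\in\Z^{n+t}$. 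Fix one such coset $h^*\oplus T_{k^*}$ containing $v$, and split $k^*=(\ell,\mu)\in\Z^n\times\Z^t$.

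If $\ell\neq 0$, then unpacking $\chi_{k^*}(v\ominus h^*)=0$ coordinate by coordinate gives
$$\chi_\ell(g'_1,\dots,g'_n)=\chi_\ell(h^*_1,\dots,h^*_n)\oplus\chi_\mu(h^*_{n+1},\dots,h^*_{n+t})\ominus\chi_\mu(g_1,\dots,g_t),$$
which lies in $G$ since every term on the right does; this is exactly the required linear dependence modulo $G$. The delicate case is $\ell=0$: then $T_{k^*}=\mathbb A^n\times\ker\chi_\mu$, so the entire slice $(G')^n\times\{(g_1,\dots,g_t)\}$ sits inside the coset and hence inside $V(q)$, which forces $r(\pi(y'_1),\dots,\pi(y'_n))=0$ for every $(y'_1,\dots,y'_n)\in (G')^n$. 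Since a nonzero polynomial in $n$ variables cannot vanish on $S^n$ for any infinite set $S$, $\pi(G')$ must be finite; by the standing convention that each $\pi_j$ is $\lam$-definable over $\pi$, the map $\pi|_{\mathbb A}$ is injective, and so $G'$ itself is finite. Then every $g'_i$ is torsion and $k=(\ord(g'_1),0,\dots,0)\in\Z^n\setminus\{0\}$ witnesses $\chi_k(g'_1,\dots,g'_n)=0\in G$.

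The main obstacle is the degenerate case $\ell=0$, in which the coset places no constraint on the first $n$ coordinates and so does not a priori produce a relation among the $g'_i$; the argument above shows this case can only arise when $G'$ is finite, in which case linear dependence modulo $G$ holds trivially.
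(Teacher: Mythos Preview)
Your proof is correct and follows the same route as the paper: rewrite the algebraic relation as the vanishing of a polynomial over $\Q$ in the coordinates of $(g'_1,\dots,g'_n,g_1,\dots,g_t)$, apply the Mordell--Lang decomposition to $V(q)\cap(G')^{n+t}$, and read off the linear dependence modulo $G$ from the coset containing the tuple. One small inaccuracy in your treatment of the degenerate case: the standing convention $\pi_j(a)\in\dcl(\pi(a))$ does \emph{not} force $\pi|_{\mathbb A}$ to be injective (already for the unit circle with $\pi$ the first coordinate it is two-to-one), but it does force finite fibers, which is all you need to conclude $G'$ is finite once $\pi(G')$ is; the paper instead dispatches the degenerate case (for $n=1$) by directly contradicting the nonvanishing of $p(X_1,h)$, tacitly assuming $\pi(G')$ is infinite, so your version is in fact slightly more complete.
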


\begin{proof}
 We just prove the case $n=1$ and the general case can be proven using similar arguments. So let $g'\in G'$ be algebraically dependent over $\Q(G)$. Let $X=(X_1,\dots,X_m)$ and $Y=(Y_1,\dots,Y_{mt})$ be tuples of distinct indeterminates, and take a polynomial $p(X_1,Y)\in \Q[X_1,Y]$ and $h\in G^t$ such that $p(\pi(g'),h)=0$ and $p(X_1,h)$ is not the zero polynomial. Then considering $p(X_1,Y)$ as an element of $K[X,Y]$ and using the Mordell-Lang condition for $n=t+1$ we get $k\in\Z^{t+1}$ and $h'\in G^{t+1}$ such that $(g',h)\in h'\oplus (T_k\cap (G')^{t+1})$. Note that $k_1\neq 0$, because otherwise $p(\pi(g''),h)=0$ for every $g''\in G'$ and hence $p(X_1,h)$ is the zero polynomial. Now it is easy to see that $k_1g'\in G$.
\end{proof}

\subsection{Smallness revisited}
The aim of this subsection is to prove Corollary \ref{smallnesscor} below, which is used in Section~\ref{modeltheory} in a very essential way. That result is a consequence of an abstract condition called {\it smallness}, which in turn is satisfied by the groups with the Mordell-Lang property (see Proposition \ref{small} below).

\medskip\noindent
Here we define smallness only in the setting of fields; in Section \ref{opencoresection}, we define it in a more general setting.
First we  recall some notations: For a positive integer $l$, an {\it $l$-valued map}, denoted as $f:X\stackrel{l}{\longrightarrow} Y$, is a map from $X$ to $\Cal P(Y)$ such that $|f(x)|\leq l$ for every $x\in X$; and such a map is {\it definable} in a given structure $\Cal M$ if its {\it graph}
$$\{(x,y)\in X\times Y: y\in f(x)\}$$
is definable in $\Cal M$. For $A\subseteq X$, we let $f(A):=\bigcup_{a\in A}f(a)$.

\begin{definition}
Let $E$ be a field. A subset $X$ of $E^s$ is called {\it large} if there is a map $f:E^{sn}\longrightarrow E$ definable in the field $E$ such that $f(X^n)=E$; otherwise we say that $X$ is {\it small}.
\end{definition}

\begin{remarks}
(1) Smallness is an elementary property of the pair $(E,X)$ construed as a structure in the language of rings expanded by an $s$-ary relation symbol.

\smallskip\noindent
(2) If $E$ is an ordered field or is an algebraically closed field, then $X$ is large if and only if there is multi-valued map $f:E^{sn}\stackrel{l}{\longrightarrow} E$ definable in the field $E$ such that $f(X)=E$. It is easy to see this when $E$ is an ordered field and for algebraically closed fields see Lemma 2.4 in \cite{vdDG} (note that the definition of large is different in that paper).
\end{remarks}




\medskip\noindent
We first mention a result that has been neglected in \cite{BEG}.
It must be known by many people, but we could not find a reference for it anywhere. So we include a proof as well.

\medskip\noindent
Remember that in the beginning of this section we fixed $K$ to be a  real closed field and $G$ a subgroup of a t-connected one dimensional group definable in $K$ with the Mordell-Lang property.  Let $C=K(\sqrt{-1})$.

\begin{lemma}\label{codensity}
If $X\subseteq K^s$ is small in $C$. Then $X$ is small in $K$.
\end{lemma}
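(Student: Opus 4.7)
The plan is to prove the contrapositive: if $X$ is large in $K$, then $X$ is large in $C$. Start with a map $f : K^{sn} \to K$ definable in $K$ with $f(X^n) = K$. The goal is to produce a multi-valued map $g : C^{2sn} \stackrel{l}{\longrightarrow} C$ definable in $C$ with $g(X^{2n}) = C$; an appeal to Remark~(2), applied to the algebraically closed field $C$, will then give that $X$ is large in $C$.

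The basic idea rests on the decomposition $C = K + K\sqrt{-1}$. If one can extend $f$ to a finite-valued multi-valued map $\tilde{f} : C^{sn} \stackrel{l_0}{\longrightarrow} C$ definable in $C$ with $f(x) \in \tilde{f}(x)$ for every $x \in K^{sn}$, then setting
\[
g(u, v) \;:=\; \tilde{f}(u) \;+\; \sqrt{-1} \cdot \tilde{f}(v)
\]
produces an $l_0^2$-valued definable-in-$C$ map satisfying $g(X^{2n}) \supseteq K + K\sqrt{-1} = C$, since every element of $C$ is of the form $f(u) + \sqrt{-1}\, f(v)$ for some $u, v \in X^n$.

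To construct $\tilde{f}$, I would first take the Zariski closure $W_0 \subseteq C^{sn+1}$ of the graph $\gr(f)$. Since $K^{sn}$ is Zariski-dense in $C^{sn}$, the projection $W_0 \to C^{sn}$ is dominant and $\dim W_0 \leq sn$. No irreducible component of $W_0$ can be of the form $Y \times C$ with $Y$ positive-dimensional, because such a component would require $\gr(f) \cap (Y \times C)$ to be Zariski-dense in it, contradicting the fact that $\gr(f)$ has at most one point in each vertical fiber. Hence the generic fiber of $W_0 \to C^{sn}$ is finite, and the bad locus $B_0 := \{x \in C^{sn} : (W_0)_x = C\}$ of exceptional ``jump'' fibers is a proper algebraic subset of dimension $< sn$. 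Outside $B_0$, the assignment $x \mapsto (W_0)_x$ is a finite-valued map containing $f(x)$ for every $x \in K^{sn} \setminus B_0$. To handle the points of $X^n$ that land inside $B_0$, I iterate: restrict $f$ to $B_0 \cap K^{sn}$, take the Zariski closure $W_1 \subseteq B_0 \times C$ of its graph, and extract a new bad locus $B_1 \subsetneq B_0$ of strictly smaller dimension. Dimensions drop at each step, so the process terminates after at most $sn$ rounds, and gluing the finitely many pieces produces a constructible, hence $C$-definable, subset $\hat{W} \subseteq C^{sn+1}$ all of whose fibers are finite and contain $f(x)$ for every $x \in K^{sn}$. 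Then $\tilde{f}(x) := \hat{W}_x$ has the desired properties.

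The main obstacle is precisely this bad locus: a one-shot Zariski-closure extension of $f$ fails on the jump fibers, and \emph{a priori} $X^n$ could lie entirely inside $B_0$, in which case a naive $\tilde{f}$ would miss all of $f(X^n) = K$. The dimension-descent iteration circumvents this at the cost of a piecewise definition, but the resulting extension is still definable in $C$ because only finitely many layers appear and each one is constructible in $C^{sn+1}$.
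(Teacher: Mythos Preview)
Your approach is correct and takes a genuinely different route from the paper's. Both arguments prove the contrapositive and reduce to the same key claim: any $K$-definable $f:K^{sn}\to K$ extends to a $C$-definable multi-valued $\tilde f:C^{sn}\stackrel{l}{\longrightarrow}C$ with $f(\alpha)\in\tilde f(\alpha)$ for every $\alpha\in K^{sn}$. You also make explicit the final passage $g(u,v)=\tilde f(u)+\sqrt{-1}\,\tilde f(v)$ to hit all of $C$, which the paper leaves tacit.

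Where you diverge is in how $\tilde f$ is produced. The paper argues model-theoretically: pass to an $\aleph_0$-saturated $K$ via Remark~(1), observe that $f(\alpha)$ lies in the algebraic closure (in the field sense, inside $C$) of the parameters together with $\alpha$, and use saturation/compactness to extract finitely many algebraicity-witnessing formulas $\phi_1,\ldots,\phi_t$ that work uniformly for all $\alpha\in K^{sn}$; the union of their solution sets gives $\tilde f$. Your construction is geometric: take the Zariski closure of $\gr(f)$, excise the bad locus of infinite fibers, and iterate by Noetherian dimension descent. The paper's route is shorter and avoids the descent bookkeeping at the price of a saturation reduction; yours is more explicit and works directly over the given $K$.

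Two points worth tightening. First, the phrase ``contradicting the fact that $\gr(f)$ has at most one point in each vertical fiber'' is not self-contained: an arbitrary single-valued graph \emph{can} be Zariski-dense in $Y\times C$. What you really need is that $\gr(f)$, being semialgebraic, has Zariski closure of the same dimension as its semialgebraic dimension (equivalently, by quantifier elimination in $\RCF$ and the fact that a graph has empty interior, $\gr(f)$ lies in a hypersurface); this gives $\dim W_0\le sn$ and drives the descent. Second, you should note that the fibers of your glued $\hat W$ are \emph{uniformly} bounded, not just finite, so that $\tilde f$ is genuinely $l_0$-valued; this follows from constructibility in $\ACF$ (a definable family of finite sets has bounded cardinality), but deserves a word.
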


\begin{proof}
By the first of the remarks above we may assume that $K$ is $\aleph_0$-saturated.  Also by the second remark, it is enough to prove the following:

\smallskip\noindent{\it Claim.} Let $f : K^s \to K$ be definable in the field $K$. Then there is a multi-valued function $\tilde f : C^s\stackrel{l}{\longrightarrow} C$ definable in the field $C$ such that $f(\alpha)\in\tilde f(\alpha)$ for each $\alpha\in K^s$.

\smallskip\noindent{\it Proof of the claim.} Suppose that $f$ is definable over $B\subseteq K$ and let $\alpha\in K^s$. Then $f(\alpha)$ is in the definable closure in $K$ of $B\cup\{\alpha\}$. Hence $f(\alpha)$ is in the algebraic closure in $C$ of $B\cup\{\alpha\}$. Let this be witnessed by a formula $\phi(x,y)$ in the language of rings; that is to say that
\begin{equation}\label{claimform}
C\models \phi(\alpha,f(\alpha)) \text{ and } |\{ y \in C :  C \models \phi(\alpha,y)\}| < \infty.
\end{equation}
By quantifier elimination for algebraically closed fields, the second part of \eqref{claimform} is expressible by a formula in the language of rings. By saturation of $K$, there are formulas $\phi_1,\dots,\phi_t$ in the language of rings such that for all $\alpha \in K^s$ there is $i \in \{1,\dots,t\}$ such that \eqref{claimform} holds with $\phi$ in the place of $\phi_i$.\\
For $\alpha \in C$, let $I_{\alpha}$ be the set
$$
\{ i \in \{1,\dots,t\} \ : \ |\{ y \in C : C\models\phi_i(\alpha,y) \}|<\infty\}.
$$
Now define a multi-valued function $\tilde{f}:C^s \to C$ by
\begin{equation*}
\tilde{f}(x) := \left\{
                  \begin{array}{ll}
                    \bigcup_{i \in I_x} \{ y \in C : C \models \phi_i(x,y) \}, & \hbox{ if $I_x \neq \emptyset$,}\\
                    \{0\}, & \hbox{otherwise.}
                  \end{array}
                \right.
\end{equation*}

\end{proof}

%
%
%
%



\medskip\noindent
We need the following general model theoretic fact in the next proposition.

\medskip\noindent
{\bf Fact.} A field is interpretable in an abelian group only if it is finite.  For this, recall the well-known model theoretic results that every abelian group is one-based and that a group interpretable in a one-based structure has an abelian subgroup of finite index (see \cite{Pillay_GST}). Now the fact follows since $\operatorname{SL}_2(E)$ does not have an abelian subgroup of finite index for an infinite field $E$.

\medskip\noindent
Now we are ready to prove the following.

\begin{proposition}\label{small}
 The group $G$ is small in $K$.
\end{proposition}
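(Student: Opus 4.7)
The plan is to argue by contradiction, after first reducing to the algebraically closed field. Suppose $G$ is large in $K$. By Lemma \ref{codensity}, $G$ is then large in $C=K(\sqrt{-1})$, so there exist $n$ and a map $f\colon C^{mn}\to C$ definable in the field $C$ with $f(G^n)=C$. Note that because we use the definition of large directly (not Remark~(2)), the map $f$ is genuinely single-valued — this will matter below.

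The core step is to use $f$ together with Corollary \ref{ML-general} to interpret the field $C$ in the pure abelian group $(G,\oplus)$. First, the equivalence relation $g\sim g'$ on $G^n$ given by $f(g)=f(g')$ is the intersection with $G^{2n}$ of a set definable in the field $C$, hence by Corollary \ref{ML-general} it is $\oplus$-definable in $G$; the quotient $G^n/{\sim}$ is therefore in definable bijection with $f(G^n)=C$. Next, the ternary relations on $G^{3n}$ defined by $f(g_1)+f(g_2)=f(g_3)$ and $f(g_1)\cdot f(g_2)=f(g_3)$, together with the unary preimages of $0$ and $1$ under $f$, are likewise intersections with appropriate Cartesian powers of $G^n$ of subsets of $C^{mk}$ field-definable in $C$. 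Applying Corollary \ref{ML-general} again, all these sets are $\oplus$-definable in $G$. Since $f$ is single-valued they descend through $\sim$ to equip $G^n/{\sim}$ with the structure of a field isomorphic to $C$.

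This provides an interpretation of the field $C$ in the abelian group $(G,\oplus)$. Since $C$ is infinite, this directly contradicts the Fact stated just before the proposition (no infinite field is interpretable in an abelian group). Hence $G$ must be small in $K$, completing the proof.

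The hard part, if any, is purely bookkeeping: one has to check that each of the relations used (the equivalence $\sim$ and the pullbacks of $+$, $\cdot$, $0$, $1$) really is the intersection of a field-definable subset of some $C^{mk}$ with $G^k$, so that Corollary \ref{ML-general} applies in the correct arity. Granted that, the argument is immediate — the real work has already been done in establishing the Mordell–Lang property over $K$ (Proposition before Corollary \ref{ML-general}) and the reduction from $K$ to $C$ via Lemma \ref{codensity}.
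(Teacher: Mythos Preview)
Your proof is correct and follows essentially the same route as the paper's: reduce to $C$ via Lemma~\ref{codensity}, then use Corollary~\ref{ML-general} to show that the fibers of $f$ and the pullbacks of $+$ and $\cdot$ are definable in the pure group $(G,\oplus)$, yielding an interpretation of the infinite field $C$ in an abelian group, in contradiction with the stated Fact. Your write-up is slightly more explicit about the bookkeeping (the arity checks and the constants $0,1$), but the argument is the same.
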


\begin{proof}
By Lemma~\ref{codensity}, it is enough to show that $G$ is small in the algebraically closed field $C$.
For a contradiction let $f:C^{mn}\to C$ be definable in the field $C$ such that $f(G^n)=C$. Let $R\subseteq C^{2mn}$ be the equivalence relation on $C^{mn}$ defined as follows
$$R(x,y)\Longleftrightarrow f(x)=f(y),$$
and put $R_G:=R\cap G^{2n}$, which is definable in the group $(G,\oplus)$ by Corollary~\ref{ML-general}. Then $f$ gives a bijection between $G^{n}/R_G$ and $C$, and we carry the addition and multiplication on $C$ to $G^{n}/R_G$ using this bijection, which are interpretable in $(G,\oplus)$. This gives an infinite field interpretable in an abelian group, contradicting the fact above.
%
\end{proof}

\medskip\noindent
A consequence of smallness is the following.

\begin{corollary}\label{smallnesscor}
Let $f_1,\dots,f_l:K^{mn}\to K$ be definable in the ordered field $K$. Then $K\setminus \bigcup_{i=1}^{l}f_i(G^n)$ is dense in $K$.
\end{corollary}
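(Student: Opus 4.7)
The plan is to prove the contrapositive: if the complement $K\setminus\bigcup_{i=1}^{l}f_i(G^n)$ fails to be dense, then $G$ fails to be small in $K$, contradicting Proposition~\ref{small}.

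First, suppose for contradiction that there is a nonempty open interval $(a,b)\subseteq K$ that is contained in $\bigcup_{i=1}^{l}f_i(G^n)$. Since $K$ is a real closed field, there is an $\lam$-definable bijection $h\colon (a,b)\to K$ (for instance $h(t)=\tfrac{1}{t-a}-\tfrac{1}{b-t}$). The idea is now to bundle the $f_i$ together through $h$ so as to produce a single definable multi-valued map from $G^n$ onto $K$.

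Concretely, I would define an $l$-valued map $F\colon K^{mn}\stackrel{l}{\longrightarrow}K$ by
$$F(x):=\{\,h(f_i(x))\;:\;1\leq i\leq l,\ f_i(x)\in(a,b)\,\}.$$
By construction $F$ is definable in the ordered field $K$ and $|F(x)|\leq l$ for every $x\in K^{mn}$. Moreover, given any $c\in K$, the element $h^{-1}(c)$ lies in $(a,b)\subseteq\bigcup_{i}f_i(G^n)$, so there exist $i$ and $g\in G^n$ with $f_i(g)=h^{-1}(c)$; hence $c\in F(g)\subseteq F(G^n)$. Thus $F(G^n)=K$.

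By part~(2) of the Remarks following the definition of smallness, the existence of such a multi-valued definable map witnessing $F(G^n)=K$ is equivalent to $G$ being large in the ordered field $K$. This contradicts Proposition~\ref{small}, completing the proof. I do not expect any genuine obstacle here: the only point one has to be mildly careful about is to invoke the multi-valued version of largeness (Remark~(2)) rather than the single-valued definition, since the natural construction $F$ above is genuinely $l$-valued and there is no reason a single-valued selector from it would remain definable in the ordered field $K$.
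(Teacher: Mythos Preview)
Your proof is correct and follows essentially the same approach as the paper's: both assume an interval lies in $\bigcup_i f_i(G^n)$, push it onto all of $K$ via a definable surjection, bundle the $f_i$ into a single $l$-valued map, and invoke Remark~(2) together with Proposition~\ref{small}. The only cosmetic difference is that the paper first forms the multi-valued map and then composes with the surjection, whereas you compose first and then bundle.
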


\begin{proof}
Let $f:K^{mn}\to K$ be the $l$-valued map taking $\alpha\in K^{mn}$ to the set $\{f_1(\alpha),\dots,f_l(\alpha)\}$. Assume that a nonempty interval $I$ of $K$ is contained in $f(G^n)$. Take a function $g:K\to K$ definable in the ordered field $K$ that maps $I$ onto $K$. Now $(g\circ f)(G^n)=K$, contradicting the smallness of $G$.
\end{proof}

\section{Model theory} \label{modeltheory}
\noindent
Remember that $(\mathbb A,\oplus)$ is a one-dimensional t-connected group definable in a real closed field $K$ over $\emptyset$. As before for another real closed field $E$, we let $\mathbb A(E)$ denote the group definable in $E$ by the formulas defining $\mathbb A$ in $K$. As mentioned above, such a group is abelian, divisible and has finitely many $n$-torsion points for every $n>0$.

\medskip\noindent
Fix a subgroup $\Gamma$ of $\mathbb A(\R)$ with the Mordell-Lang property such that $|\Gamma/n\Gamma|$ is finite for every $n>0$.

\subsection{The theory}
Let $\lam(P)$ be the language $\lam$ of ordered rings expanded by an $m$-ary relation symbol $P$ (note that $m=2$ in the introduction). Also let $\lam(\Gamma)$ be the language $\lam$ augmented by constant symbols $\pi(\gamma)$ for each $\gamma \in \Gamma$ and let $\lam(P;\Gamma)$ the language $\lam(\Gamma)$ extended by $P$.
For simplicity of notation we denote $\lam(\Gamma)$-structures by $\big(K,(\gamma)_{\gamma\in\Gamma}\big)$, rather than $\big(K,(\pi(\gamma))_{\gamma\in\Gamma}\big)$; similarly $\big(K,G,(\gamma)_{\gamma\in\Gamma}\big)$ are $\lam(P;\Gamma)$-structures.


\medskip\noindent
Let $T$ be the $\lam(\Gamma)$-theory of $\big(\mathbb R,(\gamma)_{\gamma \in \Gamma}\big)$ and
let $T(\Gamma)$ be the $\lam(P;\Gamma)$-theory extending $T$ whose models are of the form $\big(K,G,(\gamma)_{\gamma\in\Gamma}\big)$ satisfying the following:
\begin{enumerate}
 \item $G$ is a t-dense subgroup of $\mathbb A(K)$,
\item for every $n>0$ and $g\in G$, if $ng\in\Gamma$, then $g\in\Gamma$,
\item for every $n>0$, $|G/nG|=|\Gamma/n\Gamma|$,
\item $G$ satisfies the same Mordell-Lang conditions as $\Gamma$ (see page \pageref{ML-conditions}).
\end{enumerate}

\medskip\noindent
Using Proposition 2.5 of \cite{Pillay} once again, we get a finite subset $S$ of $\mathbb A(K)$ such that a subset $X$ of $\mathbb A(K)$ is t-dense if and only if $X\setminus S$ is dense in $\mathbb A(K)\setminus S$. Hence condition (1) is first order in the language $\lam(P;\Gamma)$.
It is easy to see that conditions (2) and (3) are also first order in the language $\lam(P;\Gamma)$; for the last one we fix $\gamma_1,\dots,\gamma_t\in\Gamma^n$ and $k_1,\dots,k_t\in\Z^n$ for a given polynomial $p\in\Q[X]$ such that that
$$V(p)\cap\Gamma^n=\bigcup_{i=1}^t \gamma_i\oplus (T_{k_i}\cap \Gamma^n),$$
and consider the formula
\begin{align}\forall x_1\cdots\forall x_{mn}\bigwedge_{j=0}^{n-1}P(x_{jm+1},\dots,x_{jm+m})\rightarrow
\big[ p(x_1,\dots,x_{mn})=0\nonumber\\
\leftrightarrow\bigvee_{i=1}^{t}\chi_{k_{i}}\big((x_1,\dots,x_{m}),\dots,(x_{mn-m+1},\dots,x_{mn})\big)=\chi_{k_i}(\gamma_i)\big]\nonumber.
\end{align}

\medskip\noindent
Note that if $\Gamma$ is t-dense in $\mathbb A(\R)$, then $(\R,\Gamma)$ is a model of $T(\Gamma)$. We are proceeding to show that $T(\Gamma)$ is complete in that case. We achieve that by constructing a back-and-forth system between models of $T(\Gamma)$.
The same back-and-forth system gives that $T(\Gamma)$ has quantifier elimination up to formulas of the form
$$\exists y_1\cdots\exists y_{mn}\big(\bigwedge_{j=0}^{n-1}P(y_{mj+1},\dots,y_{mj+m})\wedge\phi(x,y_1,\dots,y_{mn})\big)$$
where $x$ is a tuple of distinct variables and $\phi$ is a formula in the language $\lam(\Gamma)$.

\medskip\noindent
In the rest of this section $\big(K,G,(\gamma)_{\gamma\in\Gamma}\big)$ ranges over models of $T(\Gamma)$, and we denote them simply by $(K,G)$.


\medskip\noindent
For $k=(k_1,\dots,k_n)\in\Z^n$ and $e\in\N$, define
$$D_{k,e}:=\chi_k^{-1}\big(eG\big)\cap G^n.$$
Note that $D_{k,e}$ is a subset of $G^n$ definable in $\lam(P)$ and that $(eG)^n\subseteq D_{k,e}$. Hence we have that $D_{k,e}$ is of finite index in $G^n$ as $eG$ is of finite index in $G$. Thus both $D_{k,e}$ and $G^n\setminus D_{k,e}$ are finite unions of cosets (in $G^n$) of $(eG)^n$.  Using the fact that $eG\cap e'G=\lcm(e,e')G$ for $e,e'\in\N$, we get the following consequence.

\begin{lemma}\label{unionofcosets}
Let $n>0$, $k_1,\dots,k_s\in\Z^n$ and $e_1,\dots,e_t\in\N$.
Then every boolean combination (in $G^n$) of cosets of $D_{k_i,e_j}$ in $G^n$ with $i\in\{1,\dots,s\}$ and $j\in\{1,\dots,t\}$ is a finite union of cosets of $(lG)^n$, where $l$ is the lowest common multiple of $e_1,\dots,e_t$.
\end{lemma}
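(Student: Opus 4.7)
The plan is to work with the finer equivalence relation defined by $(lG)^n$ on $G^n$ and show that each $D_{k_i,e_j}$ (together with all its cosets) is already a finite union of $(lG)^n$-cosets; then a routine boolean-algebra argument finishes the proof.

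First, I would observe that $(lG)^n \subseteq (e_j G)^n$ for every $j$, since $e_j \mid l$ forces $lG \subseteq e_j G$ (equivalently, the identity $eG\cap e'G=\lcm(e,e')G$ cited in the excerpt yields $(lG)^n = \bigcap_j (e_j G)^n$ by induction). In particular, every coset of $(e_j G)^n$ in $G^n$ is a disjoint union of cosets of $(lG)^n$. Combined with the remark immediately preceding the lemma that $D_{k_i, e_j}$ is a finite union of cosets of $(e_j G)^n$, this shows that $D_{k_i,e_j}$ itself is a finite union of cosets of $(lG)^n$; translating by any element of $G^n$, the same holds for every coset $g\oplus D_{k_i,e_j}$.

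Second, I would invoke the finite-index property: by condition (3) in the axiomatization of $T(\Gamma)$, we have $|G/lG| = |\Gamma/l\Gamma| < \infty$, so $(lG)^n$ has finite index in $G^n$. Consequently the family $\mathcal{F}$ of subsets of $G^n$ that are finite unions of cosets of $(lG)^n$ is closed under finite unions (trivially), under intersections (since the intersection of two cosets of the same subgroup is either empty or again a coset of that subgroup), and under complements in $G^n$ (by finiteness of the index). Hence $\mathcal{F}$ is a boolean subalgebra of the power set of $G^n$. Since by the first step every coset of every $D_{k_i,e_j}$ belongs to $\mathcal{F}$, so does any boolean combination of such cosets, which is exactly the conclusion.

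There is no real obstacle: the lemma is essentially a bookkeeping statement about subgroups of $G^n$ that contain the common refinement $(lG)^n$. The only subtlety worth flagging is the need to invoke the explicit axiom $|G/nG| = |\Gamma/n\Gamma| < \infty$ from $T(\Gamma)$ to conclude that $(lG)^n$ has finite index in $G^n$; without this, closure of $\mathcal{F}$ under complements would fail.
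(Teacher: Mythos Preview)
Your proof is correct and follows essentially the same route as the paper: the paper's argument is the paragraph immediately preceding the lemma, which establishes that $(e_jG)^n\subseteq D_{k_i,e_j}$ has finite index and then invokes $eG\cap e'G=\lcm(e,e')G$, and you simply unfold this into the boolean-algebra closure statement. The only minor quibble is that the word ``finite'' in your first step already relies on the finite-index fact you isolate in your second step, so the two steps are not as independent as your presentation suggests.
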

%
\medskip\noindent
{\bf Remark.} Note that the coset representatives can be chosen from $\Gamma^n$ by axiom (3).
Moreover, $l$ in the lemma depends only on $e_1,\dots,e_t$ and not on $G$ or $k_1,\dots,k_s$.

\begin{lemma}\label{denseinsemi} Let $\gamma\in \mathbb A(K)^n$, $k\in\Z^n$ and $e\in\N$. Then $\gamma\oplus D_{k,e}$ is t-dense in $\mathbb A(K)^n$.
\end{lemma}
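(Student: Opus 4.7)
The plan is to reduce, via the topological group structure of $\mathbb A(K)^n$, to showing that $eG$ is t-dense in $\mathbb A(K)$. First I would exploit that translation by $\gamma$ is a t-homeomorphism of $\mathbb A(K)^n$; this shows that t-density of $\gamma\oplus D_{k,e}$ is equivalent to t-density of $D_{k,e}$. Next, observing that $(eG)^n\subseteq D_{k,e}$ (because $a_i\in eG$ forces $k_ia_i\in eG$, and hence $\chi_k(a_1,\dots,a_n)\in eG$), I would reduce further to showing that $(eG)^n$ is t-dense in $\mathbb A(K)^n$.

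For the passage from a product to a single coordinate, I would use Pillay's description quoted at the start of Section \ref{ML-property}: the t-topology on $\mathbb A(K)$ agrees with the topology induced from $K^m$ away from a finite set, so the t-topology on $\mathbb A(K)^n$ agrees (off a lower-dimensional set) with the $n$-fold product topology. Since a product of dense sets is dense in the product topology, the problem reduces to proving $eG$ is t-dense in $\mathbb A(K)$.

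The core step, and the one I expect to be the main obstacle, is the density of $eG$. Axiom (3) of $T(\Gamma)$ gives $|G/eG|=|\Gamma/e\Gamma|<\infty$, so $G$ is a finite disjoint union of cosets of $eG$, say $G=\bigsqcup_{i=1}^r g_i\oplus eG$. Let $H=\overline{eG}$, which is a closed subgroup of $\mathbb A(K)$. A standard topological-group fact says a subgroup with nonempty interior is open, hence clopen; by the standing t-connectedness assumption on $\mathbb A(K)$, if $H$ had nonempty interior we would have $H=\mathbb A(K)$ and $eG$ would be t-dense. If on the contrary $H$ had empty interior, then each translate $g_i\oplus H$ would be closed with empty interior, and the finite union $\bigcup_i(g_i\oplus H)=\overline{G}$ would then have empty interior—contradicting axiom (1), the t-density of $G$ in $\mathbb A(K)$. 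Hence the first case occurs, which finishes the reduction chain and proves the lemma.
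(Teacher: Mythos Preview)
Your proof is correct, and the overall reduction---translate by $\gamma$, pass to $(eG)^n\subseteq D_{k,e}$, then reduce to a single factor---matches the paper's structure. The difference is in how you establish that $eG$ is t-dense in $\mathbb A(K)$.

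The paper argues directly: multiplication by $e$ is t-continuous on the topological group $\mathbb A(K)$, and since $\mathbb A(K)$ is divisible (a consequence of t-connectedness established in Section~\ref{ML-property}), this map is a continuous surjection $\mathbb A(K)\to\mathbb A(K)$. Continuous surjections take dense sets to dense sets, so $eG$ is t-dense. This uses only axiom~(1).

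Your argument instead invokes axiom~(3) to write $G$ as a finite union of cosets of $eG$, and then runs a Baire-type dichotomy using t-connectedness: either $\overline{eG}$ has nonempty interior (hence is clopen, hence equals $\mathbb A(K)$), or $\overline{G}$ is a finite union of closed nowhere-dense sets, contradicting t-density of $G$. This is valid but longer, and brings in an axiom the paper does not need here. The paper's route is more economical; yours has the minor virtue of not appealing to divisibility, though that fact is already in hand. Your detour through the comparison of the t-topology with the ambient $K^m$-topology is also unnecessary: the relevant topology on $\mathbb A(K)^n$ is the product t-topology, and a product of dense subsets is dense---the paper uses this implicitly too.
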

\begin{proof} Since $G$ is t-dense in $\mathbb A(K)$ and multiplication by $e$ is a t-continuous map on $\mathbb A(K)$, we have that $(eG)^n$ is t-dense in $(e\mathbb A(K))^n$. Since $\mathbb A(K)^n$ is divisible, $(eG)^n$ is t-dense in $\mathbb A(K)^n$. Since  $(eG)^n\subseteq D_{k,e}$, we have that $D_{k,e}$ is t-dense in $\mathbb A(K)^n$. Since addition is t-continuous, $\gamma \oplus D_{k,e}$ is t-dense $\mathbb A(K)^n$. \end{proof}

\medskip\noindent
Recall that a subgroup $H$ of $G$ is called \emph{pure} if $nG\cap H=nH$ for every $n>0$.  For a subset $X$ of $G$ we let $\<X\>_G$ be the subgroup of $G$ generated by $X$ and we let $[X]_G$ be the subgroup of $G$ consisting of $g$ such that $ng\in\<X\>_G$ for some $n>0$. When the ambient group $G$ is clear from the context, we omit $G$ from both of these notations.

\medskip\noindent
We prove some lemmas that will be useful in the rest of the section.

\begin{lemma}\label{submainlemma}
Let $H$ be a pure subgroup of $G$ containing $\Gamma$ and let $g\in G$. Then
$$\big(\Q(H,g)^{\rc}\big)^m\cap G=[H\cup\{g\}]$$
\end{lemma}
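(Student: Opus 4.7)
The plan is to prove the two inclusions separately. For $(\supseteq)$, take $g'\in[H\cup\{g\}]_G$ and write $ng'=h\oplus kg$ with $n>0$, $h\in H$, $k\in\Z$. Since $\oplus$ is $\lam$-definable and, by our standing convention, every coordinate of an element of $\mathbb A$ is $\lam$-definable over its first coordinate $\pi$, we get $\pi(ng')\in\Q(\pi(h),\pi(g))^{\rc}\subseteq\Q(H,g)^{\rc}$; the finiteness of the $n$-torsion of $\mathbb A$ then makes $g'$ one of finitely many preimages of $ng'$ under the definable map of multiplication by $n$, so $\pi(g')$ and hence all coordinates of $g'$ lie in $\Q(H,g)^{\rc}$.

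The substantive direction is $(\subseteq)$. Given $g'\in G$ with $\pi(g')\in\Q(H,g)^{\rc}$, I would pick $h_1,\ldots,h_s\in H$ and $\gamma_1,\ldots,\gamma_r\in\Gamma$ so that $\pi(g')$ is algebraic over $F:=\Q(\pi(\gamma_1),\ldots,\pi(\gamma_r),\pi(h_1),\ldots,\pi(h_s),\pi(g))$. Starting from the minimal polynomial of $\pi(g')$ over $F$ and clearing denominators produces a polynomial $p\in\Q[X_0,X_1,\ldots,X_{s+1},Y_1,\ldots,Y_r]$ vanishing at $(\pi(g'),\pi(h_1),\ldots,\pi(h_s),\pi(g),\pi(\gamma_1),\ldots,\pi(\gamma_r))$ and such that the specialization $p(X_0,\pi(h_1),\ldots,\pi(h_s),\pi(g),\pi(\gamma_1),\ldots,\pi(\gamma_r))$ is a nonzero polynomial in the single variable $X_0$.

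Next I would apply the Mordell--Lang conditions for $G$ (axiom~(4)) to $V(p)\cap G^{s+2+r}$, decomposing this set into finitely many cosets of the form $\gamma^{*}\oplus(T_k\cap G^{s+2+r})$ with $\gamma^{*}\in\Gamma^{s+2+r}$ and $k\in\Z^{s+2+r}$. The tuple $(g',h_1,\ldots,h_s,g,\gamma_1,\ldots,\gamma_r)$ lies in one such coset, yielding a $\Z$-linear relation
$$k_0 g'\oplus k_1 h_1\oplus\cdots\oplus k_s h_s\oplus k_{s+1}g\oplus\sum_{j=1}^{r}k_{s+1+j}\gamma_j\in\Gamma.$$
Once we know $k_0\neq 0$, solving for $k_0g'$ and using $\gamma_j\in\Gamma\subseteq H$ places it inside $H\oplus\Z g=\langle H\cup\{g\}\rangle_G$, which gives $g'\in[H\cup\{g\}]_G$ as desired.

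The hard part will be arguing $k_0\neq 0$, and I would handle this exactly as in the proof of Lemma~\ref{dependence}: if $k_0$ were zero, the coset would permit the first coordinate to range freely over $G$, so $(x,h_1,\ldots,h_s,g,\gamma_1,\ldots,\gamma_r)$ would lie in $V(p)$ for every $x\in G$; this forces $p(X_0,\pi(h_1),\ldots,\pi(h_s),\pi(g),\pi(\gamma_1),\ldots,\pi(\gamma_r))$ to have infinitely many zeros --- $\pi(G)$ being infinite by the t-density of $G$ in $\mathbb A$ --- and hence to vanish identically, contradicting our choice of $p$.
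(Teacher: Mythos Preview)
Your proof is correct and follows essentially the same strategy as the paper's. The only difference is one of packaging: the paper observes that since $\Gamma\subseteq H$, the group $G$ automatically satisfies the same Mordell--Lang conditions as $H$, and then simply invokes Lemma~\ref{dependence} (applied with $H$ in the role of the base group) to conclude that $g$ and $g'$ are linearly dependent modulo $H$; you instead unfold that lemma inline, applying axiom~(4) directly over $\Gamma$ and then using $\Gamma\subseteq H$ at the end.
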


\begin{proof}
It is easy to see that $[H\cup\{g\}]\subseteq\big(\Q(H,g)^{\rc}\big)^m\cap G$.
Now take $g'\in(\Q(H,g)^{\rc})^m\cap G$. Since $G$ satisfies the same Mordell-Lang conditions as $H$, we can apply Lemma \ref{dependence} to get that $g$ and $g'$ are linearly dependent modulo $H$. Thus $g'\in [H\cup\{g\}]$.
\end{proof}

\medskip\noindent
We can strengthen this lemma as follows.

\begin{lemma}\label{mainlemma} Let $H,g$ be as in the previous lemma and let $X$ be a subset of $K$ algebraically independent over $\pi(G)$.
Then
\begin{equation*}
\big(\Q(X,H,g)^{\rc}\big)^m \cap G=[H \cup\{g\}].
\end{equation*}
\end{lemma}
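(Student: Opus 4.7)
The inclusion $[H \cup \{g\}] \subseteq \bigl(\Q(X,H,g)^{\rc}\bigr)^m \cap G$ is immediate from Lemma \ref{submainlemma}, so the content lies in the reverse inclusion. The plan is to show that passing from $\Q(H,g)$ to $\Q(X,H,g)$ introduces no new algebraic elements arising from $\pi(G)$, and then to invoke Lemma \ref{submainlemma}.

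Let $g' \in G \cap \bigl(\Q(X,H,g)^{\rc}\bigr)^m$. Under the standing convention that each $\pi_j(g')$ is $\lam$-definable, hence algebraic over $\pi(g')$, this condition is equivalent to $\pi(g') \in \Q(X,H,g)^{\rc}$. It therefore suffices to prove that $\pi(g') \in \Q(H,g)^{\rc}$, because then Lemma \ref{submainlemma} applied to $g'$ produces $g' \in [H \cup \{g\}]$.

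To prove this, fix a nonzero polynomial $q \in \Q[X,H,g][T]$ with $q(\pi(g')) = 0$. Only finitely many elements of $X$ appear in $q$, say $x_1,\dots,x_r$, so write
$$q(T) = \sum_{i \in \N^r} r_i(T)\, x^i,$$
where the $r_i(T) \in \Q(H,g)[T]$ are almost all zero and at least one is nonzero as a polynomial in $T$. Evaluating at $T = \pi(g')$ produces the identity $\sum_i r_i(\pi(g'))\, x^i = 0$, whose coefficients all lie in $\Q(\pi(G))$. Since $X$ is algebraically independent over $\pi(G)$, every $r_i(\pi(g'))$ must vanish. Choosing an index with $r_i \neq 0$ exhibits a nonzero polynomial over $\Q(H,g)$ annihilating $\pi(g')$, as required.

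There is no real obstacle; once one notices that the algebraic independence of $X$ lets us extract coefficients monomial by monomial, the lemma reduces cleanly to Lemma \ref{submainlemma}. The only piece of bookkeeping worth flagging is reconciling $g' \in \bigl(\Q(X,H,g)^{\rc}\bigr)^m$ with $\pi(g') \in \Q(X,H,g)^{\rc}$ via the algebraicity of $\pi_2(g'),\dots,\pi_m(g')$ over $\pi(g')$.
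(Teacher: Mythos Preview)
Your argument is correct and follows the same strategy as the paper's: reduce to Lemma \ref{submainlemma} by showing that $\pi(g')$ is already algebraic over $\Q(H,g)$. The paper carries out this reduction via Steinitz exchange (taking a minimal $X'\subseteq X$ with $g'\in(\Q(X',H,g)^{\rc})^m$ and deriving a contradiction if $X'\neq\emptyset$) rather than by expanding a witnessing polynomial in the $x_i$, but the content is identical; one small slip worth noting is that your coefficients $r_i(\pi(g'))$ lie in $\Q(G)$ rather than $\Q(\pi(G))$, which is harmless since $\Q(G)$ is algebraic over $\Q(\pi(G))$ and hence $X$ remains algebraically independent over it.
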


\begin{proof}
By the previous lemma, all we need to show is
\begin{equation}\label{mlcor2} \big(\Q(X,H,g)^{\rc}\big)^m\cap G \subseteq
\big(\Q(H,g)^{\rc}\big)^m\cap G.
\end{equation}
Let $g' \in \big(\Q(X,H,g)^{\rc}\big)^m \cap G$.
Let
$X'$ be a minimal subset of $X$ such that $g' \in\big(\Q(X',g,H)^{\rc}\big)^m$. For
a contradiction, suppose that $X'$ is nonempty and let $x \in X'$.
By minimality of $X'$, we have that $g' \notin (\Q(X'\setminus\{x\},H,g)^{\rc})^m$. But then
the Steinitz Exchange Principle implies that $x \in
\Q(X'\setminus\{x\},H,g,g')^{\rc}$. Since $g,g' \in G$, we get that
\begin{equation*}
x \in \Q(X'\setminus\{x\},G)^{\rc}.
\end{equation*}
This contradicts with the assumption that $X$ is algebraically independent over
$\pi(G)$. Hence $X'$ is empty and $g' \in \big(\Q(H,g)^{\rc}\big)^m \cap G$.
\end{proof}

\begin{corollary}\label{zerodefing} Let $g\in G$. If $g$ is $\lam(\Gamma)$-$\emptyset$-definable, then $g \in \Gamma$.
\end{corollary}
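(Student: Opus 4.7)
The plan is to reduce the claim, via definable closure in the real closed field $K$, to an algebraic dependence statement and then invoke Lemma~\ref{dependence} together with axiom~(2) of $T(\Gamma)$.

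First I would unpack the hypothesis. Since $g\in G$ is $\lam(\Gamma)$-$\emptyset$-definable in $(K,G)$, each coordinate of $g$ lies in the $\lam$-definable closure of the parameter set $\pi(\Gamma)$ in $K$. Because $K$ is real closed, $\lam$-definable closure coincides with relative algebraic closure; in particular $\pi(g) \in \Q(\pi(\Gamma))^{\rc}$, and therefore $\pi(g)$ is algebraic over $\Q(\pi(\Gamma)) = \Q(\Gamma)$. Using the convention that an element $g' \in G$ is called algebraic over $L$ when $\pi(g')$ is, this says exactly that the single element $g$ is algebraically dependent over $\Q(\Gamma)$.

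Next I would apply Lemma~\ref{dependence} with the roles $G \leftrightsquigarrow \Gamma$ and $G' \leftrightsquigarrow G$. The hypotheses are satisfied: $G$ contains $\Gamma$, and axiom~(4) of $T(\Gamma)$ is precisely the assertion that $G$ satisfies the same Mordell–Lang conditions as $\Gamma$. In the case $n=1$ the lemma then yields $k\in\Z\setminus\{0\}$ with $\chi_k(g) = kg \in \Gamma$. Finally, axiom~(2) of $T(\Gamma)$ — the purity of $\Gamma$ in $G$ — forces $g\in\Gamma$, completing the proof.

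There is really no obstacle here: all the work has already been done in establishing Lemma~\ref{dependence} and in setting up the axioms of $T(\Gamma)$. The only conceptual point worth flagging is the translation from $\lam(\Gamma)$-$\emptyset$-definability in $(K,G)$ to algebraicity of $\pi(g)$ over $\Q(\Gamma)$, which uses that $\lam$-definable closure in a real closed field is $\Q(\cdot)^{\rc}$; after that the two axioms (4) and (2) combine cleanly to eliminate the integer $k$ and give $g\in\Gamma$ directly.
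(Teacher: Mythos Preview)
Your proof is correct and is essentially the paper's argument: the paper invokes Lemma~\ref{submainlemma} (with $H=\Gamma$) to get $(\Q(\Gamma)^{\rc})^m\cap G=[\Gamma]$ and then uses axiom~(2) to conclude $[\Gamma]=\Gamma$, while you go directly to Lemma~\ref{dependence} (of which Lemma~\ref{submainlemma} is an immediate consequence) together with axiom~(4), and then finish with axiom~(2). The only difference is that you bypass the packaging of Lemma~\ref{submainlemma} and appeal to its source lemma instead.
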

\begin{proof} Using Lemma \ref{submainlemma}, we have $[\Gamma]=\big(\Q(\Gamma)^{\rc}\big)^m\cap G$. But $[\Gamma]=\Gamma$ by axiom $(2)$.
%
\end{proof}

\begin{lemma}\label{projectionlemma} Suppose that $(K,G)$ is $|\Gamma|^+$-saturated.
Let $S$ be an $\lam(\Gamma)$-$\emptyset$-definable subset of $\mathbb A(K)$ and let $D$ be a t-dense subset of $G$. Then  the projection $\pi\big((D\setminus\Gamma) \cap S\big)$ is dense in the interior of $\pi(S)$.
\end{lemma}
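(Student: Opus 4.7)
The plan is, given a nonempty open interval $I\subseteq\pi(S)^{\circ}$, to produce $d\in(D\setminus\Gamma)\cap S$ with $\pi(d)\in I$, in three moves: first extract a nonempty t-open subset $V\subseteq S$ of $\mathbb A(K)$ whose projection is an open subinterval of $I$; then use $|\Gamma|^+$-saturation to shrink $V$ to a piece disjoint from $\Gamma$; finally apply t-density of $D$.

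For the first move, $S$ is a definable subset of the one-dimensional t-connected group $\mathbb A(K)$, so its t-interior $S^\circ$ satisfies that $S\setminus S^\circ$ is finite and $\pi(S^\circ)$ contains $I$ except for finitely many points. Pick $y\in S^\circ$ with $\pi(y)\in I$. Using the standing hypothesis that the $\pi_j$ are $\lam$-definable over $\pi$ (so that $\pi|_{\mathbb A(K)}$ is, locally around $y$ and modulo the finite set where the t-topology diverges from the $K^m$-subspace topology, a $\lam$-definable homeomorphism onto an open subset of $K$), choose a t-open neighborhood $V$ of $y$ in $\mathbb A(K)$ contained in $S$ and with $\pi(V)$ an open subinterval of $I$.

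For the second move, realize in $(K,G)$, by $|\Gamma|^+$-saturation, the partial $\lam(\Gamma)$-type $p(u,v)$ over the (finitely many) parameters needed to define $\pi(V)$, asserting $u<v$, $(u,v)\subseteq\pi(V)$, and $\pi(\gamma)\notin(u,v)$ for every $\gamma\in\Gamma$. The type is finitely satisfiable since excising finitely many points from the nonempty open set $\pi(V)$ still leaves a nonempty open subinterval. Set $W:=V\cap\pi^{-1}((u,v))$; this is a nonempty t-open subset of $V$, and if $\gamma\in W\cap\Gamma$ then $\pi(\gamma)\in(u,v)$, contradicting $p$, so $W\cap\Gamma=\emptyset$.

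For the third move, axiom (1) of $T(\Gamma)$ gives that $G$ is t-dense in $\mathbb A(K)$, so $D\subseteq G$ is t-dense there too. Hence $W\cap D\neq\emptyset$, and any $d\in W\cap D$ lies in $(D\setminus\Gamma)\cap S$ with $\pi(d)\in I$, which is what we want. The step I expect to be most delicate is the first one: producing $V$ with the required properties requires careful use of o-minimality in $\mathbb A(K)$ (to get that $S\setminus S^\circ$ is finite), of the assumption on the $\pi_j$ (to ensure $\pi|_{\mathbb A(K)}$ is locally a homeomorphism off a finite set), and of the compatibility of the t-topology with the $K^m$-subspace topology (to conclude t-openness of $V$).
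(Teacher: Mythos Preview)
Your proof is correct, and while it shares the same underlying idea as the paper's—use $|\Gamma|^+$-saturation to find a piece of the target interval avoiding $\pi(\Gamma)$, then apply t-density of $D$—your execution differs in two noteworthy ways.

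First, the paper routes the saturation argument through the observation that $\pi(\Gamma)$ is \emph{discrete} in $K$ (from which one extracts a subinterval disjoint from $\pi(\Gamma)$), whereas you build the type $p(u,v)$ directly. These are equivalent moves; yours is arguably more transparent.

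Second, and more substantively, the paper invokes Corollary~\ref{zerodefing} twice: once to show $G\cap Y\subseteq\Gamma$ (so that removing $\Gamma$ from $D$ also removes the bad points $Y$), and once to show that $\pi((D\setminus\Gamma)\cap S)$ actually lands in the interior of $\pi(S)$. You bypass both uses. For the first, you simply choose your t-open $V$ to avoid the finite set $Y$, which is possible since you have a whole interval $I$ of $\pi$-values to work with; this makes your argument independent of Corollary~\ref{zerodefing} (and hence of axiom~(2) and the Mordell-Lang machinery behind Lemma~\ref{submainlemma}). For the second, you correctly observe that density in $\pi(S)^\circ$ does not require the containment $\pi((D\setminus\Gamma)\cap S)\subseteq\pi(S)^\circ$, so you skip it. The net effect is a more elementary and self-contained argument.

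Two small remarks. Your sentence in move~3, ``axiom~(1) of $T(\Gamma)$ gives that $G$ is t-dense in $\mathbb A(K)$, so $D\subseteq G$ is t-dense there too,'' is either superfluous (if you read the hypothesis as ``$D$ is t-dense in $\mathbb A(K)$'', which is the paper's convention) or requires the extra step ``$D$ dense in $G$ and $G$ t-dense imply $D$ t-dense''; either way the conclusion holds, but the phrasing muddies which reading you intend. And in move~1, you should note explicitly that $\pi(V)$ can be taken to be an interval by passing to a suitable $1$-cell of $\mathbb A(K)$ containing $y$ (on which $\pi$ is a homeomorphism onto an interval); you gesture at this, and it is indeed routine o-minimality, but it is the one place where care is needed.
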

\begin{proof} Let $Y\subseteq \mathbb A(K)$ be the finite set of points where the t-topology does not agree with the induced topology from $K^m$. Hence $D\setminus Y$ is dense in $\mathbb A(K) \setminus Y$.
By the construction of the t-topology in \cite{Pillay}, every point of $Y$ is $\lam(\Gamma)$-$\emptyset$-definable. Hence $G\cap Y \subseteq \Gamma$ by Corollary \ref{zerodefing}.

\medskip\noindent
By o-minimality, $\pi(S)$ is a finite union of open intervals and points. So suppose there is $g \in D$ such that $\pi(g)$ is one of these points. This implies that $\pi(g)$ is $\lam(\Gamma)$-$\emptyset$-definable, since $S$ is. Then $g\in\Gamma$ by Corollary \ref{zerodefing}. Hence $\pi\big((D\setminus \Gamma)\cap S\big)$ is in the interior of $\pi(S)$.

\medskip\noindent
Since $(K,G)$ is $|\Gamma|^+$-saturated, $\pi(\Gamma)$ is discrete in $K$. Because $D\setminus Y$ is dense in $\mathbb A(K)\setminus Y$ and $Y\cap D\subseteq \Gamma$, we get that $\pi((D\setminus \Gamma) \cap S)$ is dense in the interior of $\pi(S)$.
\end{proof}



\subsection{Back-and-forth and completeness} Fix two $|\Gamma|^+$-saturated models $(K,G)$ and $(K',G')$ of $T(\Gamma)$, and
let $\mathcal{S}$ be the collection of $\lam(P; \Gamma)$-isomorphisms
$$\beta:\big(\Q(X,H)^{\rc},H\big)\to\big(\Q(X',H')^{\rc},H'\big)$$
where $H$ and $H'$ are pure subgroups of cardinality at most
  $|\Gamma|$ of $G$ and $G'$ containing $\Gamma$ and $X$ and $X'$ are finite subsets of $K$ and $K'$ that are algebraically independent over $\Q(G)$ and $\Q(G')$ respectively and $\beta(X)=X'$.
%

\medskip\noindent
Note that by Lemma \ref{mainlemma},  $(\mathbb{Q}(X,H)^{\rc},H)$ and
$(\mathbb{Q}(X',H')^{\rc},H')$ as above become $\lam(P; \Gamma)$-substructures of $(K,G)$ and $(K',G')$ respectively. Moreover the map $\beta$ is a partial elementary map between the ordered fields $K$ and $K'$ (in the language $\lam$).

\begin{lemma} The collection $\mathcal{S}$ is a back-and-forth-system.
\end{lemma}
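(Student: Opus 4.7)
The plan is to verify the two extension properties of a back-and-forth system (nonemptiness is straightforward), focusing on the forth direction since the back direction is symmetric. For nonemptiness, take $H=H'=\Gamma$ (pure in $G,G'$ by axiom $(2)$) and $X=X'=\emptyset$; Lemma \ref{submainlemma} applied with $g$ any element of $\Gamma$ yields $(\Q(\Gamma)^{\rc})^m\cap G=\Gamma$ in both models, so the canonical $\lam(\Gamma)$-identification of $\Q(\Gamma)^{\rc}$ across $K$ and $K'$, sending $\Gamma$ to $\Gamma$ via the constants, is a witness in $\mathcal{S}$.

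Fix $\beta\in\mathcal{S}$ with domain $(\Q(X,H)^{\rc},H)$ and pick $a\in K$; the goal is an extension $\beta'\in\mathcal{S}$ containing $a$ in its domain. I split on whether $a$ is algebraic over $\Q(X\cup G)$. In the transcendental case, I add $a$ to the transcendence basis $X$ and choose an image $a'\in K'$ realizing the $\beta$-image of the ordered-field cut of $a$ over $\Q(X,H)^{\rc}$ while being transcendental over $\Q(X',G')$. Existence of such $a'$ follows from Corollary \ref{smallnesscor}: every finite union of the form $\bigcup_i f_i((G')^n)$ with $f_i$ $\lam$-definable over $\Q(X',H')^{\rc}$ has dense complement in $K'$, so $|\Gamma|^+$-saturation combined with finite satisfiability realizes the full type (cut together with all avoidance conditions).

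In the algebraic case, $a$ lies in $\Q(X,H\cup\{g_1,\dots,g_r\})^{\rc}$ for some $g_i\in G$; it is enough to adjoin one $g$ at a time. Since $\mathbb{A}(K)$ has only finitely many torsion points of each order and these are $\lam(\Gamma)$-$\emptyset$-definable, Corollary \ref{zerodefing} forces them all into $\Gamma\subseteq H$, and purity of $H$ then gives $[H]=H$; so I may assume $g\notin H$. The heart of the argument is producing $g'\in G'\setminus H'$ whose $\lam(P;\Gamma)$-type over $\beta$'s image matches that of $g$. The required conditions on $g'$ are: (i) $\pi(g')$ realizes the $\beta$-image of the cut of $\pi(g)$ over $\Q(X,H)^{\rc}$; (ii) $g'$ lies in the correct coset of $nG'$ for every $n>0$, the target coset being transported from $G/nG$ to $G'/nG'$ via $\Gamma$-representatives using axiom $(3)$; and (iii) $g'\neq h$ for each of the $\leq|\Gamma|$-many $h\in H'$. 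Condition (i) is handled by Lemma \ref{projectionlemma} with $D=G'$, condition (ii) by Lemma \ref{denseinsemi} applied to cosets of the relevant sets $D_{k,e}$, and condition (iii) by $|\Gamma|^+$-saturation; finite satisfiability of the combined type then produces a realization $g'$. Once $g'$ is fixed, Lemma \ref{mainlemma} identifies $(\Q(X,H,g)^{\rc})^m\cap G$ and $(\Q(X',H',g')^{\rc})^m\cap G'$ with the respective pure closures $[H\cup\{g\}]$ and $[H'\cup\{g'\}]$, and the Mordell-Lang axiom $(4)$ then ensures that the extension of $\beta$ sending $g\mapsto g'$, propagated first to the pure closure and then to the real closure, is a well-defined $\lam(P;\Gamma)$-isomorphism and so belongs to $\mathcal{S}$.

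The main obstacle is this last case: arranging the ordered-field cut, the coset structure modulo every $nG'$, and the avoidance of $H'$ simultaneously, and then verifying that the resulting extension preserves all group-theoretic and algebraic relations between the new element and $H$. This is exactly the point where all four axioms of $T(\Gamma)$ together with the main algebraic lemma (Lemma \ref{mainlemma}) must be used in concert.
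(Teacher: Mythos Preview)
Your proposal is correct and follows essentially the same approach as the paper: the same case split (transcendental over $\Q(X,G)$ versus algebraic, the latter reduced to adjoining a single $g\in G$), the same use of Corollary~\ref{smallnesscor} in the transcendental case, and the same combination of Lemmas~\ref{unionofcosets}, \ref{denseinsemi}, and~\ref{projectionlemma} together with saturation to realize the required type in the group case, followed by Lemma~\ref{mainlemma} to identify the new predicate with the pure closure.

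A few minor remarks on presentation. First, the paper phrases the coset conditions as $l g\oplus h\in sG$ ranging over all $h\in H$ rather than only over $\Gamma$-representatives; your formulation is equivalent (since $H$ is pure and axiom~(3) gives $\Gamma$-representatives for every coset), and indeed the paper reduces to yours via Lemma~\ref{unionofcosets} when checking finite satisfiability. Second, your sentence ``condition (i) is handled by Lemma~\ref{projectionlemma} with $D=G'$, condition (ii) by Lemma~\ref{denseinsemi}'' slightly inverts the logic: one first uses Lemma~\ref{denseinsemi} to see that the relevant coset $D$ is t-dense, and then applies Lemma~\ref{projectionlemma} with \emph{that} $D$ to obtain density of $\pi\big((D\setminus\Gamma)\cap S\big)$ in the interior of $\pi(S)$; this is what makes a \emph{finite conjunction} of cut and coset conditions satisfiable, which is what finite satisfiability of the type requires. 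Third, your condition~(iii) (avoiding $H'$) is redundant: once $a'$ realizes the full image cut it is automatically transcendental over $\Q(X',H')^{\rc}$, hence $(a',f(a'))\notin H'$. Finally, the paper treats nonemptiness separately in the proof of Theorem~\ref{completeness}, whereas you fold it into the lemma; either is fine.
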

\begin{proof}
Let $\beta:(\Q(X,H)^{\rc},H)\to(\Q(X',H')^{\rc},H')$ be in
$\mathcal{S}$ and take $a \in K\setminus\Q(X,H)^{\rc}$. By symmetry it is enough to prove
that there is $\tilde\beta \in \mathcal{S}$ such that $\tilde\beta$ extends
$\beta$ and $a\in \operatorname{dom}(\tilde\beta)$.

\smallskip\noindent
\underline{Case 1:} $a\in \pi(G)$.\\
Take $b \in K^{m-1}$ such that $(a,b)\in G$. Since $G\subseteq \mathbb A(K)$ and $\mathbb A(K)$ is $\lam$-$\emptyset$-definable of dimension 1, there is $\lam$-$\emptyset$-definable function $f:K\to K^{m-1}$ such that $b=f(a)$.
Let $q(x)$ be the $\lam(P;\Gamma)$-type consisting of the
$\lam$-type of $a$ over $\mathbb{Q}(X,H)^{\rc}$ and for every $l\in\Z$, $h\in H$ and $s>0$ one of the formulas
\begin{align}\label{pureness}
l(x,f(x)) \oplus h &\in sG,\\
l(x,f(x)) \oplus h &\notin sG,
\label{pureness2}
\end{align}
depending on whether it holds true that $l(a,b) \oplus h \in sG$ or not.  Further let
$q'(x)$ be the type over $\Q(X',H')^{\rc}$ corresponding to $q(x)$ via $\beta$. We want to find an
$a' \in K'$ such that $a'$ realizes $q'(x)$. By saturation of
$(K',G')$, it is enough to show that every finite subset of
$q'(x)$ can be realized in $(K',G')$. By o-minimality of $T$, this reduces to find $a'\in K'$ for every $c,d \in \mathbb{Q}(X,H)^{\rc}$ with $c<a<d$ and finite collection of formulas $\phi_1,\dots,\phi_n$ of the form
\eqref{pureness} or \eqref{pureness2} with $(K,G)\models \bigwedge_{i=1}^n \phi_i(a,b)$ such that
\begin{equation}\label{conditionona}
(K',G') \models \beta(c) < a' < \beta(d) \wedge \bigwedge_{i=1}^n \phi_i(a',f(a')).
\end{equation}
By Lemma~\ref{unionofcosets} and the remark succeeding it
$$D:=\{ g \in G' \ : \ (K',G')\models \bigwedge_{i=1}^n \phi_i(g)\}$$
is a finite union of cosets of $tG'$ in $G'$ for some $t \in \mathbb{N}$ and the representatives of these cosets can be chosen to be in $\Gamma$. Then by Lemmas~\ref{denseinsemi} and \ref{projectionlemma}, the set $\pi\big((D\setminus\Gamma\big)\cap \gr(f))$ is dense in the interior of  $\pi(\mathbb A(K')\cap \gr(f))$. Since $\pi(\mathbb A(K)\cap \gr(f))$ is $\lam(\Gamma)$-definable and $a$ is in it, we can assume that the interval $(c,d)$ is a subset of  $\pi(\mathbb A(K)\cap \gr(f))$. As $\beta$ is a partial $\lam$-elementary map, it follows that the interval $\big(\beta(c),\beta(d)\big)$ is a subset of $\pi(\mathbb A(K')\cap \gr(f))$ and that $\pi((D\setminus\Gamma)\cap \gr(f))\cap \big(\beta(c),\beta(d)\big)$ is a dense subset of $\big(\beta(c),\beta(d)\big)$. Now take any $a' \in \pi\big((D\setminus\Gamma)\cap \gr(f)\big)\cap \big(\beta(c),\beta(d)\big)$. This $a'$ satisfies \eqref{conditionona}.
It is clear that $[H\cup \{(a,b)\}]_G$ and $[H'\cup\{(a',f(a'))\}]_{G'}$ are pure subgroups of $G$ and $G'$ respectively. Let $\tilde\beta$ be the $\lam(P;\Gamma)$-isomorphism that extends $\beta$ to $\mathbb{Q}(X,H,a)^{\rc}$ and maps $a$ to $a'$. By conditions
\eqref{pureness} and \eqref{pureness2}, we have that $\tilde\beta$ maps $[H\cup\{(a,b)\}]_G$ onto
$[H'\cup\{(a',f(a'))\}]_{G'}$. Hence $\tilde\beta$ is an $\lam(P;\Gamma)$-isomorphism between
$\big(\mathbb{Q}(X,H,a)^{\rc},[H\cup\{(a,b)\}]_G\big)$ and
$\big(\mathbb{Q}(X',H',a')^{\rc},[H'\cup\{(a',f(a'))\}]_{G'}\big)$ and $\tilde\beta \in \mathcal{S}$.

\medskip\noindent
\underline{Case 2:} $a \in \mathbb{Q}(X,G)^{\rc}$.\\
Let $g_1,\dots,g_n \in G$ such that $a \in \mathbb{Q}\big(X,\{g_1,\dots,g_n\}\big)^{\rc}$.
By applying the previous case $n$ times, we get a $\tilde\beta \in \mathcal{S}$ such that
$g_1,\dots,g_n \in \operatorname{dom}(\tilde\beta)$. Since
$\operatorname{dom}(\tilde\beta)$ is a real closed field, we have $a\in \operatorname{dom}(\tilde\beta)$.

\medskip\noindent
\underline{Case 3:} $a \notin \mathbb{Q}(X,G)^{\rc}$.\\
Let $C$ be the cut of $a$ in $\mathbb{Q}(X,H)^{\rc}$ and let $C'$ be the
 cut in $\mathbb{Q}(X',H')^{\rc}$ corresponding to $C$ via $\beta$. By
saturation, we can assume that there are $c',d' \in K'$ such
that every element in the interval $(c',d')$ realizes the cut $C'$.
Let $u \in K^{|X|}$ be the set $X$ written as a tuple. Let
$f_1,\dots,f_n:K^{mt+|X|}\to K$ be $\emptyset$-definable functions in the language $\lam(\Gamma)$. By Corollary \ref{smallnesscor}, we
know that there exists $b' \in (c',d')$ such that for $i=1,\dots,n$ and
every tuple $g_1',\dots,g_t'$ of elements of $G'$
\begin{equation*}
f_i(g_1',\dots,g_t',\beta(u)) \neq b'.
\end{equation*}
 Thus by saturation, there is an $a' \in (c',d')$ such that $a'
\notin \mathbb{Q}(X',G')^{\rc}$. Since $a'$ realizes the cut $C'$, there is an
$\lam(\Gamma)$-isomorphism $\tilde\beta$ from $\mathbb{Q}(X,a,H)^{\rc}$ to
$\mathbb{Q}(X',a',H')^{\rc}$ extending $\beta$ and sending $a$ to $a'$. Since $a
\notin \mathbb{Q}(X,G)^{\rc}$ and $a' \notin \mathbb{Q}(X',G')^{\rc}$, using Lemma \ref{mainlemma} we get that
\begin{equation*}
(\mathbb{Q}(X,a,H)^{\rc})^m \cap G = H \textrm{ and } (\mathbb{Q}(X',a',H')^{\rc})^m\cap G' =
H'.
\end{equation*}
Since $\beta(H)=H'$ and $\tilde\beta$ extends $\beta$, we get that
$\tilde\beta$ is an $\lam(P;\Gamma)$-isomorphism from $(\mathbb{Q}(X,a,H)^{\rc},H)$ to
$(\mathbb{Q}(X',a',H')^{\rc},H')$ with $\tilde\beta(X\cup \{a\})=X'\cup\{a'\}$. Thus we
have that $\tilde\beta \in \mathcal{S}$.
\end{proof}

\medskip\noindent
Now the proof of completeness becomes an easy consequence of this lemma.

\begin{theorem}\label{completeness}
Let $\Gamma$ be t-dense in $\mathbb A(\R)$.  Then the theory $T(\Gamma)$ is complete.
\end{theorem}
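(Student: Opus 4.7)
The plan is to derive completeness directly from the back-and-forth system $\mathcal{S}$ of the preceding lemma via the standard saturation argument. First I would take any two models of $T(\Gamma)$ and pass to $|\Gamma|^+$-saturated elementary extensions, so that we may assume we are working with the $(K,G)$ and $(K',G')$ of the lemma. It then suffices to show $(K,G)\equiv(K',G')$ as $\lam(P;\Gamma)$-structures.

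The substantive preparatory step is to exhibit a seed element of $\mathcal{S}$, namely the identity map on $\Q(\Gamma)^{\rc}$. For this I would first observe that $\Gamma$ is pure in $G$: if $ng\in\Gamma$ with $g\in G$, axiom $(2)$ forces $g\in\Gamma$, so $nG\cap\Gamma=n\Gamma$, and likewise $\Gamma$ is pure in $G'$. Second, since $T$ is by definition the complete $\lam(\Gamma)$-theory of $\big(\R,(\gamma)_{\gamma\in\Gamma}\big)$, the ordered subring of $K$ generated by the interpretations of the constants is canonically $\Q(\Gamma)$, and its real closure inside $K$ is canonically $\Q(\Gamma)^{\rc}$; the same holds inside $K'$. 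Taking $X=X'=\emptyset$ and $H=H'=\Gamma$ in the definition of $\mathcal{S}$, the identity on $\Q(\Gamma)^{\rc}$ is then an $\lam(P;\Gamma)$-isomorphism between the substructures $\big(\Q(\Gamma)^{\rc},\Gamma\big)$ of $(K,G)$ and of $(K',G')$, and therefore lies in $\mathcal{S}$.

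With a seed in hand, I would conclude by the usual back-and-forth principle. Given any $\lam(P;\Gamma)$-formula $\varphi(x_1,\dots,x_n)$, induction on its quantifier complexity, using the back-and-forth property of $\mathcal{S}$ at each quantifier step (saturation guarantees that the required extensions can be realized in the target model), shows that every $\beta\in\mathcal{S}$ is a partial $\lam(P;\Gamma)$-elementary map. Applied to the seed and to sentences, this yields $(K,G)\equiv(K',G')$, so $T(\Gamma)$ is complete.

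I do not foresee any real obstacle: the heavy lifting has already been done in the back-and-forth lemma. The only point that needs explicit verification is that the seed genuinely belongs to $\mathcal{S}$, and this reduces as above to the purity of $\Gamma$ in $G$ supplied by axiom $(2)$ together with the completeness of $T$.
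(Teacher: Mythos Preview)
Your proposal is correct and follows essentially the same approach as the paper: take $|\Gamma|^+$-saturated models, observe that the identity on $(\Q(\Gamma)^{\rc},\Gamma)$ lies in $\mathcal{S}$ (with $X=X'=\emptyset$, $H=H'=\Gamma$), and conclude elementary equivalence from the back-and-forth lemma. The paper's proof is simply a terser version of yours; your explicit verification that $\Gamma$ is pure in $G$ via axiom~(2) is exactly the point hidden in the paper's phrase ``it is easy to see.''
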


\begin{proof}
Take $|\Gamma|^{+}$-saturated models $(K,G)$ and $(K',G')$ of $T(\Gamma)$, and let $\Cal S$ be as above. It only remains to show that $\Cal S$ is non-empty. But it is easy to see that the identity map on $(\Q(\Gamma)^{\rc},\Gamma)$ belongs to $\Cal S$.
 \end{proof}

\subsection{Quantifier elimination}
Let $x=(x_1,\dots,x_t)$ be a tuple of distinct variables. For every $\lam(P;\Gamma)$-formula $\phi(x)$ of the form
\begin{equation}\label{formform}
\exists y_1 \cdots \exists y_{mn}
\bigwedge_{j=1}^{n} P(y_{m(j-1)+1},\dots,y_{mj}) \wedge \psi(x,y_1,\dots,y_{mn}),
\end{equation}
where $\psi(x,y)$ is an $\lam(\Gamma)$-formula, let $P_{\phi}$ be a new relation symbol of arity $t$, and let $\lam(P;\Gamma)^+$ be the
language $\lam(P;\Gamma)$ together with relation symbols $P_{\phi}$ (for various $x$).

\medskip\noindent
Let $T(\Gamma)^+$ is the $\lam(P;\Gamma)^+$-theory
extending the theory $T(\Gamma)$ by an axiom:
\begin{equation*}
\forall x\big(P_{\phi}(x) \leftrightarrow \phi(x)\big),
\end{equation*}
for each $\phi$ of the form (\ref{formform}).

\medskip\noindent
With this notation in hand we are ready to prove the promised quantifier elimination result.

\begin{theorem}\label{nearMC2}
The theory $T(\Gamma)^+$ has quantifier elimination.
\end{theorem}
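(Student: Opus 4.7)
The plan is to leverage the back-and-forth system $\mathcal S$ constructed in the previous subsection. The first thing I would observe is that every $\beta \in \mathcal S$ is automatically partial $\lam(P;\Gamma)^+$-elementary between the ambient $|\Gamma|^+$-saturated models: the back-and-forth property of $\mathcal S$ guarantees that each such $\beta$ is partial $\lam(P;\Gamma)$-elementary, hence preserves every $\lam(P;\Gamma)$-formula of the form (\ref{formform}), and by the defining axiom of $T(\Gamma)^+$ this is exactly preservation of each new relation $P_\phi$.

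With this upgrade in hand, I would apply the standard quantifier-elimination test: it suffices to show that for any two $|\Gamma|^+$-saturated models $(K,G)$ and $(K',G')$ of $T(\Gamma)^+$, every partial $\lam(P;\Gamma)^+$-isomorphism $\sigma\colon B \to B'$ between finitely generated $\lam(P;\Gamma)^+$-substructures is $\lam(P;\Gamma)^+$-elementary. In view of the first paragraph, it is enough to extend $\sigma$ to some element of $\mathcal S$, since the restriction of an elementary map is elementary.

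This extension is the main step and the main obstacle. Writing $B$ as the ordered subring generated over $\Q(\Gamma)$ by $b_1,\dots,b_n$, I would set $H := [\Gamma \cup \{g \in G : \pi(g) \in \Q(B)^{\rc}\}]_G$. By Lemma~\ref{dependence} applied with $\Gamma$ in the role of $G$ and $G$ in the role of $G'$ (which is legitimate thanks to axiom~(4) of $T(\Gamma)$), any elements of this subgroup that are linearly independent modulo $\Gamma$ must have projections that are algebraically independent over $\Q(\Gamma)$; since $\trdeg(\Q(B)^{\rc}/\Q(\Gamma)) \leq n$, this bounds the rank of $H$ modulo $\Gamma$ by $n$, and combined with axiom~(3) this yields that $H$ is pure in $G$, generated over $\Gamma$ by finitely many elements, and of cardinality at most $|\Gamma|$. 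Taking $X$ to be a maximal subset of $\{b_1,\dots,b_n\}$ algebraically independent over $\Q(G)$, an iterated application of Lemma~\ref{mainlemma} then gives $B \subseteq \Q(X,H)^{\rc}$ and $(\Q(X,H)^{\rc})^m \cap G = H$, placing the enlargement in the form required by $\mathcal S$. The hardest part is manufacturing a compatible enlargement on the target side: I would use the preservation by $\sigma$ of the predicates $P_\phi$ — which encode exactly the ``there exist $G$-tuples satisfying a given polynomial relation with the $b_i$'s'' data — together with the saturation of $(K',G')$, to find a matching pure subgroup $H' \subseteq G'$ and a set $X' \subseteq K'$ algebraically independent over $\Q(G')$, and to extend $\sigma$ to a map $\beta\colon (\Q(X,H)^{\rc},H) \to (\Q(X',H')^{\rc},H')$ belonging to $\mathcal S$. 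Once such a $\beta$ is in place, the back-and-forth property of $\mathcal S$ (previous lemma) permits adjoining any further element of $K$ to the domain, so $\beta$ (hence $\sigma$) is $\lam(P;\Gamma)^+$-elementary by the first paragraph, completing the proof.
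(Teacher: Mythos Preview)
Your overall strategy --- reduce quantifier elimination to showing that tuples with the same quantifier-free $\lam(P;\Gamma)^+$-type are connected by some $\beta \in \mathcal S$, then invoke the back-and-forth property --- is exactly the paper's. The problem is your construction of $H$.

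You set $H := [\Gamma \cup \{g \in G : \pi(g) \in \Q(B)^{\rc}\}]_G$ and assert that ``an iterated application of Lemma~\ref{mainlemma} then gives $B \subseteq \Q(X,H)^{\rc}$''. But Lemma~\ref{mainlemma} only computes $(\Q(X,H,g)^{\rc})^m \cap G$; it says nothing about which field elements lie in $\Q(X,H)^{\rc}$, and cannot deliver the inclusion $B \subseteq \Q(X,H)^{\rc}$. Concretely, suppose $n=1$ and $b_1 = \pi(g_1) + \pi(g_2)$ for $g_1,g_2 \in G$ whose projections are algebraically independent over $\Q(\Gamma)$. Then $b_1$ is algebraic over $\Q(G)$, so $X = \emptyset$; but the witnesses $g_1,g_2$ have no reason to satisfy $\pi(g_i) \in \Q(b_1,\Gamma)^{\rc}$, and nothing rules out that \emph{no} $g \in G \setminus \Gamma$ has $\pi(g) \in \Q(b_1,\Gamma)^{\rc}$, in which case your $H$ collapses to $\Gamma$ and $b_1 \notin \Q(X,H)^{\rc}$. (Your further claim that $H$ is finitely generated over $\Gamma$ is also not justified by a rank bound alone --- a torsion-free abelian group of finite rank need not be finitely generated --- though this is secondary.)

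The paper reverses the order: having fixed the transcendence basis $\{a_1,\dots,a_r\}$ of $\Q(G,a)$ over $\Q(G)$, it first \emph{chooses} $g_1,\dots,g_t \in G$ with $a_{r+1},\dots,a_n \in \Q(a_1,\dots,a_r,g_1,\dots,g_t,\Gamma)^{\rc}$, so the domain contains the tuple by construction. The step you flag as ``hardest'' is then carried out explicitly: one forms the type consisting of the $\lam(\Gamma)$-type of $(g_1,\dots,g_t)$ over $\Q(a_1,\dots,a_r)^{\rc}$ together with all conditions $\bigoplus k_i x_i \oplus \gamma \in sG$ or their negations, observes via Lemma~\ref{unionofcosets} that any finite conjunction of the divisibility conditions is equivalent to a formula of shape~\eqref{formform} (hence to a single $P_\phi$), and reads off finite satisfiability of the transferred type from the shared quantifier-free $\lam(P;\Gamma)^+$-type of $(a_1,\dots,a_r)$ and $(b_1,\dots,b_r)$. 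Your sketch names the right ingredients for this step but does not execute it, and with the wrong $H$ there is no coherent type to realize.
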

\begin{proof} Let $(K,G)$ and $(K',G')$ be two $|\Gamma|^{+}$-saturated models of $T(\Gamma)^{+}$ and let  $\mathcal{S}$ be the back-and-forth system between $(K,G)$ and $(K',G')$ constructed above.
Also take $a=(a_1,\dots,a_n) \in K^n$
and $b=(b_1,\dots,b_n) \in (K')^n$ with the same
quantifier-free $\lam(P;\Gamma)^+$-type.  In order to prove quantifier elimination, we just need to find $\tilde\beta \in \mathcal{S}$ sending $a$ to $b$.
Without loss of generality, we may assume
that $\{a_1,\dots,a_r\}$ is a transcendence basis of $\Q(G,a)$ over $\Q(G)$. Since $a$ and $b$ have the same quantifier-free $\lam(P;\Gamma)^+$-type, we get that
$\{b_1,\dots,b_r\}$ is a transcendence basis of $\Q(G',b)$ over $\Q(G')$.  Let $\beta$ be the $\lam(\Gamma)$-isomorphism between
$\mathbb{Q}(a_1,\dots,a_r,\Gamma)^{\rc}$ and $\mathbb{Q}(b_1,\dots,b_r,\Gamma)^{\rc}$ sending $a$ to $b$. We will now show that $\beta$ extends to an isomorphism $\tilde\beta$ in the back-and-forth-system $\mathcal{S}$. Let $g_1,\dots,g_t \in G$ be such that $a_{r+1},\dots,a_n$ are in $\mathbb{Q}(a_1,\dots,a_r,g_1,\dots,g_t,\Gamma)^{\rc}$.
Let $q(x_1,\dots,x_t)$ be the $\lam(P;\Gamma)$-type consisting of the
$\lam(\Gamma)$-type of $(g_1,\dots,g_t)$ over $\mathbb{Q}(a_1,\dots,a_r)^{\rc}$ and for every $k_1,\dots,k_t\in\Z$, $s\in\N$ and $\gamma\in\Gamma$ one of the formulas
\begin{align}\label{qepureness}
\bigwedge_{i=1}^t P(x_i)\wedge\bigoplus_{i=1}^{t} k_i x_i \oplus \gamma &\in sG, \text{ or}\\
\bigwedge_{i=1}^t P(x_i)\wedge\bigoplus_{i=1}^{t} k_i x_i \oplus \gamma &\notin sG,
\label{qepureness2}
\end{align}

\noindent
depending on whether $\bigoplus_{i=1}^{t} k_i g_i \oplus \gamma \in sG$. Let
$q'$ be the type corresponding to $q$ under $\beta$. We want to find
$h_1,\dots,h_t \in G'$ realizing $q'$. By saturation of
$(K',G')$, it is enough to show that every finite subset of
$q'$ can be realized. So let $\psi(x,b_1,\dots,b_r)$ be an $\lam(\Gamma)$-formula  in $q'$ and $\chi_1(x),\dots,\chi_e(x)$ be finitely many formulas in $q'$ of  the form \eqref{qepureness} or \eqref{qepureness2}. Put $\chi=\bigwedge_{i=1}^{e}\chi_i$. By Lemma \ref{unionofcosets}, the set
$$\big\{ (h_1,\dots,h_t) \in G'^{t} \ : \ (K',G')\models\chi(h_1,\dots,h_t))\big\}$$
is a finite union of cosets of $(lG')^t$ in $(G')^t$ for some $l \in \mathbb{N}$. So the formula $\chi(x)$ is equivalent to an $\lam(P;\Gamma)$-formula of the form \eqref{formform}. Hence the disjunction $\psi\wedge \chi$
is a quantifier-free $\lam(P;\Gamma)^{+}$-formula. Since $(a_1,\dots,a_r)$ and $(b_1,\dots,b_r)$ have the same quantifier-free $\lam(P;\Gamma)^{+}$-type, and
$$\exists x(\psi\wedge\chi)(x,a_1,\dots,a_r)$$
holds in $(K,G)$, the corresponding formula $\exists x(\psi\wedge\chi)(x,b_1,\dots,b_r)$ holds in $(K',G')$.
So $q'$ is finitely satisfiable. Now let $h_1,\dots,h_t \in G'$ realize $q'$. Then $\beta$ extends to a field isomorphism
$$\tilde\beta:\mathbb{Q}\big(a_1,\dots,a_r,g_1,\dots,g_t,\Gamma\big)^{\rc}\to\mathbb{Q}\big(b_1,\dots,b_r,h_1,\dots,h_t,\Gamma\big)^{\rc}.$$
By the construction of $g_1,\dots,g_t$ and $h_1,\dots,h_t$, we have that
\begin{equation*}
\bigoplus_{i=1}^{t} k_i g_i \oplus \gamma \in sG \textrm{ if and only if } \bigoplus_{i=1}^{t} k_i h_i \oplus \beta(\gamma) \in sG'
\end{equation*}
for all $k_1,\dots,k_t \in \mathbb{Z}$, $s \in \mathbb{N}$ and $\gamma \in \Gamma$. Hence $\tilde\beta$ is in $\mathcal S$.

\end{proof}

\subsection{Induced structure}
Let $(K,G)$ be a model of $T(\Gamma)$. Here we study the subsets of $G^n$ definable in $(K,G)$.

\medskip\noindent
Let $B$ be a set of parameters such that $B\setminus \pi(G)$ is algebraically independent over $\Q(G)$.

\begin{proposition}\label{induced}
Let $X \subseteq G^n$ be definable in $\big(K,G,(\gamma)_{\gamma\in\Gamma}\big)$ with parameters from $B$. Then $X$ is a finite union of sets of the form
\begin{equation}\label{semialg&cosets}
E \cap(\gamma\oplus (sG)^n),
\end{equation}
where $E$ is $\lam(\Gamma)$-$B$-definable, $\gamma\in \Gamma^n$, and $s \in \mathbb{N}$.
\end{proposition}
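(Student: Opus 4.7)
The plan is to combine Theorem~\ref{nearMC2} with Lemma~\ref{unionofcosets}. By Theorem~\ref{nearMC2}, $X$ is defined by a quantifier-free $\lam(P;\Gamma)^+$-formula with parameters from $B$. Put this formula in disjunctive normal form and restrict to $G^n$. Since a finite union of sets of the form~\eqref{semialg&cosets} is again of the same form, it suffices to handle a single conjunct. Such a conjunct is a conjunction of atomic and negated atomic $\lam(P;\Gamma)^+$-formulas: the atomic $\lam(\Gamma)$-conjuncts carve out an $\lam(\Gamma)$-$B$-definable set $E'$, while the remaining conjuncts involve the relation symbols $P$ and $P_\psi$, with $\psi$ of the form~\eqref{formform}. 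Everything thus reduces to analyzing the $P$- and $P_\psi$-conjuncts on $G^n$.

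The central claim is that, on $G^n$, every atomic $P$- or $P_\psi$-formula with parameters in $B$ is equivalent to a finite boolean combination of $\lam(\Gamma)$-$B$-definable sets and sets of the shape $\chi_k^{-1}(\gamma\oplus sG)\cap G^n$ for $k\in\Z^n$, $\gamma\in\Gamma$ and $s\in\N$. This is extracted from the back-and-forth proof of Theorem~\ref{nearMC2}: that argument shows that the quantifier-free $\lam(P;\Gamma)^+$-type of a tuple $g\in G^n$ over $B$ is entirely determined by (i) its $\lam(\Gamma)$-type over $B$ and (ii) the coset data recording, for each $k\in\Z^n$, $\gamma\in\Gamma$ and $s\in\N$, whether $\chi_k(g)\oplus\gamma\in sG$. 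Consequently each such atomic formula on $G^n$ is implied over $B$ by some finite fragment of (i) and (ii); a standard compactness argument then promotes this to an honest finite boolean combination of the sets listed above.

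Assembling the pieces: the chosen conjunct defines on $G^n$ the intersection of $E'$ with a boolean combination of sets $\chi_k^{-1}(\gamma\oplus sG)\cap G^n$. Lemma~\ref{unionofcosets}, together with the remark following it, reduces this boolean combination to a finite union of cosets of $(lG)^n$, with representatives in $\Gamma^n$, for a common modulus $l$. Distributing, the conjunct becomes a finite union of sets of the required form $E\cap(\gamma\oplus (lG)^n)$, which completes the reduction. The step I expect to be the main obstacle is the compactness argument in the central claim, which converts the type-theoretic determinacy of the $P$- and $P_\psi$-formulas on $G^n$ into an explicit finite boolean expression in $\lam(\Gamma)$-definable sets and the coset-indicator sets $\chi_k^{-1}(\gamma\oplus sG)\cap G^n$.
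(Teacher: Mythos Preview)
Your strategy is correct and ends in the same place as the paper, but you have inserted an unnecessary detour and left the real work only gestured at.

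The paper does not pass through Theorem~\ref{nearMC2}. It works directly with the back-and-forth system $\mathcal S$: assuming $(K,G)$ is $|\Gamma|^+$-saturated, it takes $g,h\in G^n$ agreeing on every set of the form~\eqref{semialg&cosets} and constructs an explicit $\beta\in\mathcal S$ fixing $B$ with $\beta(g)=h$. The $\lam(\Gamma)$-agreement yields the field isomorphism $\Q(g,B)^{\rc}\to\Q(h,B)^{\rc}$; the coset agreement (via Lemma~\ref{unionofcosets} and Lemma~\ref{mainlemma}) shows that $\beta$ carries $[\Gamma\cup((\Q(B)^{\rc})^m\cap G)\cup\{g\}]$ onto the corresponding pure subgroup for $h$. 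Since $\mathcal S$ is a back-and-forth system, $g$ and $h$ then have the same \emph{full} $\lam(P;\Gamma)$-type over $B$. Closure of the family of finite unions of sets~\eqref{semialg&cosets} under boolean operations (Lemma~\ref{unionofcosets}) plus compactness finishes.

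Your route first invokes quantifier elimination and then, to handle the $P_\psi$-atoms, still needs exactly the type-determinacy statement above---so the QE step buys nothing. More importantly, your justification of the central claim is misdirected: the proof of Theorem~\ref{nearMC2} runs in the opposite direction, starting from tuples with the \emph{same} quantifier-free $\lam(P;\Gamma)^+$-type and producing a $\beta\in\mathcal S$. What you need is to start from tuples in $G^n$ agreeing only on (i) and (ii) and produce such a $\beta$; that construction is not contained in the proof of Theorem~\ref{nearMC2} and is precisely what the paper carries out here. You correctly flag this as the main obstacle, but you do not actually do it. Once you supply that construction, you may discard the QE detour entirely and you will have reproduced the paper's argument.
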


\begin{proof} We may assume that $(K,G)$ is a $|\Gamma|^{+}$-saturated model of $T(\Gamma)$.
Let $\mathcal{S}$ be the back and forth
system of $\lam(P;\Gamma)$-isomorphisms between $(K,G)$ and itself constructed above. Let $g,h\in G^n$ such that
for every $E\subseteq K^{mn}$ definable in $\big(K,(\gamma)_{\gamma\in\Gamma}\big)$ over $B$, $\gamma_1,\dots,\gamma_t\in\Gamma^n$, and $s\in\N$ we have that
\begin{equation}\label{semialg&cosets2}
g\in E \cap (\gamma \oplus (sG)^n)\Leftrightarrow h\in E \cap (\gamma \oplus (sG)^n).
\end{equation}

%
%
Note that by Lemma \ref{unionofcosets}, the collection of finite unions of sets of the form \eqref{semialg&cosets} is closed under boolean operations.
Hence it suffices to show that there is
$\beta\in \mathcal{S}$ fixing $B$ and mapping $g$ to
$h$. Since $h$ satisfies all $\lam(\Gamma)$-formulas over $B$ which are satisfied by $g$, there is an $\lam(\Gamma)$-isomorphism from $\mathbb{Q}(g,B)^{\rc}$ to $\mathbb{Q}(h,B)^{\rc}$ fixing $B$ and mapping $g$ to $h$. To show that $\beta \in \mathcal{S}$, it is only left to prove that $\beta$ takes $G\cap(\Q(B,g)^{\rc})^m$ to $G\cap(\Q(B,h)^{\rc})^m$. Using Lemma \ref{mainlemma} it suffices to show
$$\beta([\Gamma\cup((\Q(B)^{\rc})^m\cap G)\cup\{g\}]) = [\Gamma\cup((\Q(B)^{\rc})^m\cap G)\cup\{h\}].$$
It is enough to show for all $\gamma \in \Gamma^n$, $k \in \mathbb{Z}^n$ and $s \in \mathbb{N}$ that
\begin{equation*}
g \in \gamma \oplus D_{k,s} \textrm{ if and only if } h \in \gamma \oplus D_{k,s},
\end{equation*}
since we can choose representatives for cosets of $D_{k,s}$ in $G^n$ from $\Gamma^n$.
By Lemma \ref{unionofcosets}, there are $\gamma_1,\dots,\gamma_{t_1},\delta_1,\dots,\delta_{t_2} \in \Gamma^n$ such that $\gamma \oplus D_{k,s}= \bigcup_{i=1}^{t_1} \gamma_i \oplus (sG)^n$ and $G^n\setminus(\gamma \oplus D_{k,s}) = \bigcup_{i=1}^{t_2} \delta_i \oplus (sG)^n$. We are done since by assumption $g \in \gamma \oplus D_{k,s}$ if and only if $h \in \gamma \oplus D_{k,s}$.
\end{proof}

\begin{corollary}\label{induced-corollary}Let $X \subseteq G^n$ be definable in $\big(K,G,(\gamma)_{\gamma\in\Gamma}\big)$ with parameters from $B$. Then the topological closure $\overline{X}$ of $X$ is definable in $\big(K,(\gamma)_{\gamma\in\Gamma}\big)$ over $B$.
\end{corollary}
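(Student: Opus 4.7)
The plan is to combine Proposition~\ref{induced} with an induction on the $\lam$-dimension of the definable set involved. First, Proposition~\ref{induced} lets me write $X$ as a finite union $\bigcup_i E_i\cap(\gamma_i\oplus(s_iG)^n)$ with each $E_i$ being $\lam(\Gamma)$-$B$-definable, $\gamma_i\in\Gamma^n$ and $s_i\in\N$. Since topological closure commutes with finite unions, it is enough to handle a single piece $X=E\cap(\gamma\oplus(sG)^n)$; intersecting $E$ with the $\lam$-$\emptyset$-definable set $\mathbb A(K)^n$, I may also assume $E\subseteq\mathbb A(K)^n$.

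The inductive step proceeds on $\dim_{\lam}(E)$. Let $E^{\circ}$ be the relative interior of $E$ in $\mathbb A(K)^n$; by o-minimality $E^{\circ}$ is $\lam(\Gamma)$-$B$-definable and $\dim_{\lam}(E\setminus E^{\circ})<\dim_{\lam}(E)$. The heart of the argument is the identity
$$\overline{E^{\circ}\cap(\gamma\oplus(sG)^n)}=\overline{E^{\circ}},$$
with closures taken in $K^{mn}$. The inclusion $\subseteq$ is immediate; for the reverse, fix $p\in E^{\circ}$ and an open neighborhood $U$ of $p$ in $K^{mn}$. By Lemma~\ref{denseinsemi}, $\gamma\oplus(sG)^n$ is t-dense in $\mathbb A(K)^n$, and since the t-topology agrees with the topology induced from $K^m$ outside a finite set $Y\subseteq\mathbb A(K)$, shrinking $U$ to avoid the exceptional coordinate locus lets me conclude that $U\cap E^{\circ}\cap(\gamma\oplus(sG)^n)\neq\emptyset$. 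The induction hypothesis applied to $E\setminus E^{\circ}$ then yields that $\overline{(E\setminus E^{\circ})\cap(\gamma\oplus(sG)^n)}$ is $\lam(\Gamma)$-$B$-definable.

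The base case $\dim_{\lam}(E)=0$ treats finite $E$: each $p\in E$ is $\lam(\Gamma)$-$\acl(B)$-definable, and by Lemma~\ref{mainlemma} together with the purity axiom~(2) of $T(\Gamma)$, whenever $p\oplus(-\gamma)=sq$ for some $q\in G^n$ the element $q$ must already lie in $\Gamma^n$ (it is $\lam$-algebraic over $B\cup\Gamma$, and $\mathbb A(K)$ has only finitely many $s$-torsion points). Hence $p\in\gamma\oplus(sG)^n$ if and only if $p\in\gamma\oplus s\Gamma^n$, a condition that picks out a $\lam(\Gamma)$-$B$-definable subset of the finite set $E$. Assembling the two strata, $\overline{X}=\overline{E^{\circ}}\cup\overline{(E\setminus E^{\circ})\cap(\gamma\oplus(sG)^n)}$, which is $\lam(\Gamma)$-$B$-definable.

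The main obstacle is the density transfer in the top-dimensional step: Lemma~\ref{denseinsemi} provides t-density in $\mathbb A(K)^n$, while the topological closure we need is computed in $K^{mn}$. Since the t-topology and the topology induced from $K^m$ differ only on a finite set, this discrepancy can be handled by shrinking the witnessing neighborhood $U$ away from the finite union of ``coordinate hyperplanes'' $\{x\in\mathbb A(K)^n:\pi_i(x)\in Y\text{ for some }i\}$, which has $\lam$-dimension strictly less than $n$ and therefore misses a nonempty relatively open subset of $U\cap E^{\circ}$ on which the two topologies do coincide.
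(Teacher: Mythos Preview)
Your induction on $\dim_{\lam}(E)$ does not progress once $\dim_{\lam}(E)<n$. You take $E^{\circ}$ to be the interior of $E$ inside $\mathbb A(K)^n$, but $\mathbb A(K)^n$ has $\lam$-dimension $n$; hence whenever $\dim_{\lam}(E)<n$ the set $E^{\circ}$ is empty, $E\setminus E^{\circ}=E$, and your claimed inequality $\dim_{\lam}(E\setminus E^{\circ})<\dim_{\lam}(E)$ fails. The density identity $\overline{E^{\circ}\cap(\gamma\oplus(sG)^n)}=\overline{E^{\circ}}$ genuinely needs $E^{\circ}$ to be open in $\mathbb A(K)^n$: t-density of $\gamma\oplus(sG)^n$ in $\mathbb A(K)^n$ says nothing about density in a lower-dimensional $E$ (e.g.\ a coset $\gamma\oplus(sG)^2$ can miss the diagonal of $\mathbb A(K)^2$ entirely). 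This is precisely the case that carries all the content, and it is where the paper's argument differs: the paper inducts on $n$, and when $\dim E_0=s<n$ it parametrizes $E_0$ by an $\lam(\Gamma)$-$B$-definable continuous map $f:C\to E_0$ with $C\subseteq\mathbb A(K)^s$, pulls the problem back to the set $V'=\{g\in G^s\cap C:f(g)\in D_0\}\subseteq G^s$, and invokes the inductive hypothesis for $s<n$. That parametrization step is the missing idea in your argument.

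A secondary point: your base case is also not right as stated. You assert that if $p\in E$ is $\lam(\Gamma)$-algebraic over $B$ and $p\ominus\gamma=sq$ with $q\in G^n$, then $q\in\Gamma^n$ by Lemma~\ref{mainlemma}. But $B$ is only assumed to satisfy that $B\setminus\pi(G)$ is algebraically independent over $\Q(G)$; it may contain elements of $\pi(G)\setminus\pi(\Gamma)$. Lemma~\ref{mainlemma} then only yields $q\in H^n$ for the pure hull $H=[\Gamma\cup\{g:\pi(g)\in B\cap\pi(G)\}]$, which need not equal $\Gamma$. Concretely, take $g_0=2h_0$ with $h_0\in G\setminus\Gamma$ and $B=\{\pi(g_0)\}$: then $g_0\in 2G$ but $g_0\notin 2\Gamma$, so your proposed criterion ``$p\in\gamma\oplus(sG)^n$ iff $p\in\gamma\oplus s\Gamma^n$'' is false.
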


\begin{proof} By Lemma 1.3.4 of \cite{vdDries-book} the topological closure of a set definable in the ordered field $K$ is again definable in the ordered field $K$. So it suffices to prove that there is an $\lam(\Gamma)$-$B$-definable set $E\subseteq K^{mn}$  such that $X$ is a dense subset of $E$. We do this by induction on $n$. For $n=0$, the case is trivial. So let $n>0$. By Proposition \ref{induced} we may assume that there exist an $\lam(\Gamma)$-$B$-definable set $E_0$ and an $\lam(P;\Gamma)$-$\emptyset$-definable set $D_0\subseteq G^n$ such that $X=E_0 \cap D_0$. By Lemma \ref{denseinsemi}, we can assume that $D_0$ is t-dense in $\mathbb A(K)^n$. Since the t-topology and the induced topology from $K^m$ coincide apart from finitely many points, we can even assume that $D_0$ is dense in a $\lam(\Gamma)$-$\emptyset$-definable $S_0$.
By cell decomposition, we can assume that $E_0$ is a cell and that $E_0\subseteq S_0 \cap \mathbb{A}(K)^n$. Hence $\dim E_0 \leq n$.
First consider the case that $\dim E_0 = n$. By Lemma 4.1.15 of \cite{vdDries-book}, we can assume that $E_0$ is open in $S_0$. Since $D_0$ is dense in $S_0$, $X$ is dense in $E_0$. Now consider the case that $\dim E_0 =s$ for $s<n$. We can assume that there is an $\lam(\Gamma)$-$B$-definable set $C\subseteq \mathbb{A}(K)^s$, a projection $\pi : K^{mn} \to K^{ms}$ and an $\lam(\Gamma)$-$B$-definable continuous function $f$ such that $\pi(E_0)=C$ and $f(C)=E_0$.
Consider the set
\begin{align*}
V':= \{ (g_1,\dots,g_{s}) \in G^s \cap C \ : \ f(g_1,\dots,g_{s}) \in D_0\}.
\end{align*}
By the induction hypothesis, there is an $\lam(\Gamma)$-$B$-definable subset $E_1$ of $C$  such that $V'$ is dense in $E_1$. By continuity of the $f$, the image of $V'$ under $f$ is dense in the image of $E_1$ under $f$. Set $E:=f(E_1)$. Since $X=f(V')$, we have that $X$ is dense $E$.

\end{proof}

\section{Elliptic curves} \label{ellipticcurve}
\noindent Fix $a,b\in\Q$ such that $4a^3+27b^2\neq 0$ and let $\Delta$ be the subset of $\mathbb{R}^2$ defined by
\begin{equation*}
\{ (x,y) \in \mathbb{R}^2 \ : \ y^2=x^3+ax+b \}.
\end{equation*}
Further let $(c,d) \in \mathbb{Q}^2\setminus\Delta$ and define
\begin{equation*}
\Delta^* := \Delta \cup \{(c,d)\}\text{ and }\Delta^*(\Q):=\Delta\cap\Q^2.
\end{equation*}
In this section, we show that $\Delta^*$ can be given the structure of a definable group in $\mathbb{R}$ such that $\Delta^*(\mathbb{Q})$ becomes a subgroup with the Mordell-Lang property.\\

\noindent Let $\mathbb{P}^2(\C)$ denote the complex projective plane and we write its elements as $(\alpha:\beta:\gamma)$. Let $\Cal E$ consist of $(\alpha:\beta:\gamma)\in\mathbb P^2(\C)$ such that
$$\beta^2\gamma=\alpha^3+a\alpha\gamma^2+b\gamma^3.$$
Then $\Cal E$ is an \emph{elliptic curve} and it is well-known that it becomes a group with a group operation $\oplus$ given by rational functions over $\Q$ and identity element $\Cal O:=(0:1:0)$ (see for instance III.2.3 in \cite{silverman}). Now $\Delta^*$ can be mapped injectively into $\Cal E$ by
\begin{align*}
\iota: \Delta^* &\to \Cal E\\
(x,y) &\mapsto \left\{
                 \begin{array}{ll}
                   \Cal O, & \hbox{if $(x,y)=(c,d)$;} \\
                   (x:y:1), & \hbox{otherwise.}
                 \end{array}
               \right.
\end{align*}
We write $\Cal E(\R)$ for the image of $\Delta^*$ under the map $\iota$ and $\Cal E(\Q)$ for the image of $\Delta^*(\Q)$. \\

\noindent It is easy to see that both $\Cal E(\R)$ and $\Cal E(\Q)$ are subgroups of $\Cal E$. Thus the map $\iota$ induces a group structure on $\Delta^*$ and $\Delta^*(\Q)$ becomes a subgroup of $\Delta^*$. Since the group structure on $\Cal E$ is given by rational functions, the group structure on $\Delta^*$ is semialgebraic.
%

\medskip\noindent
The elliptic curve $\Cal E$ is a complex Lie group and as such it is isomorphic to a complex torus $\C/\Lambda$ where $\Lambda\subseteq \C$ is a lattice.
This isomorphism uses the Weierstrass elliptic function $\wp$ attached to $\Lambda$, namely
\begin{align}\quad\quad\quad\quad f:\C/\Gamma\to& \quad\Cal E\nonumber\\
z+\Lambda\mapsto&\left\{\begin{array}{ll} (\wp(z):\wp'(z):1) & \mbox{if $z\notin\Lambda$,}\\
                                  \Cal O  & \mbox{otherwise.}
                \end{array}
         \right .\nonumber
\end{align}

\noindent
We also have the quotient map $q:\C\to\C/\Gamma$.

\noindent The endomorphism ring of $\Cal E$ is either $\Z$ or $\Z[\tau]$ for some $\tau\in\C$ with $\tau^2$ is a negative integer. In the second case, we say $\mathcal E$ has \emph{complex multiplication by $\tau$}.

\medskip\noindent
In the case that $\mathcal E$ does not have complex multiplication, all algebraic subgroups of $\mathcal E^n$ are given as the kernels of maps of the form
\begin{equation}\label{linear}
 (x_1,\dots,x_n)\mapsto k_1x_1\oplus\dots\oplus k_nx_n: \Cal E^n\to \Cal E,
\end{equation}
where $k_i \in \mathbb{Z}$ for $i=1,\dots,n$.

\medskip \noindent
If the elliptic curve $\mathcal E$ has complex multiplication by $\tau$, then the situation is a bit more complicated. Because an algebraic subgroup of $\mathcal E^n$ is the kernel of a map of the form
\begin{equation}\label{cmsub}
(x_1,\dots,x_n)\mapsto \bigoplus_{i=1}^{n} (k_i + l_i \tau) x_i: \Cal E^n\to \Cal E,
\end{equation}
with $k_i,l_i \in \mathbb Z$. However, using the following lemma we still have control over the intersection of these subgroups with $\Cal E(\Q)^n$.

\begin{lemma}\label{intfinite} Let $\Cal E$ be an elliptic curve with complex multiplication by $\tau$. Then the intersection of $\mathcal E(\mathbb R)$ with its image under $\tau$ is finite.
\end{lemma}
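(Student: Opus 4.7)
The plan is to use the uniformization $f: \C/\Lambda \to \mathcal E$ introduced earlier to reduce the claim to an elementary statement about one-dimensional real subspaces of $\C$. Since $\mathcal E$ is defined over $\Q \subset \R$, the lattice $\Lambda$ is stable under complex conjugation, and $\mathcal E(\R)$ corresponds under $f$ to the subgroup $\{z + \Lambda : z - \bar z \in \Lambda\}$, a compact real Lie subgroup of $\mathcal E$ of real dimension one with only finitely many connected components; its identity component $\mathcal E(\R)^0$ is the image under $f$ of the real axis $\R \subset \C$.

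Next, multiplication by $\tau$ on $\mathcal E$ lifts, through $f$, to multiplication by the complex number $\tau$ on $\C$. The key observation is that because $\tau^2$ is a negative integer, $\tau$ is purely imaginary; thus multiplication by $\tau$, viewed as an $\R$-linear endomorphism of $\R^2 \cong \C$, has no real eigenvectors, and in particular $\tau \R \neq \R$ as one-dimensional real subspaces of $\C$. It follows that $\tau\bigl(\mathcal E(\R)^0\bigr)$ and $\mathcal E(\R)^0$ are two distinct closed connected one-dimensional subgroups of the real two-torus $\mathcal E$.

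I then invoke the standard fact that two distinct closed connected one-dimensional subgroups of a two-dimensional real torus intersect in a finite subgroup: if their intersection had positive dimension, its identity component would be a positive-dimensional closed connected subgroup of each of two one-dimensional circles, forcing the circles to coincide. Since $\mathcal E(\R)$ and $\tau(\mathcal E(\R))$ each have only finitely many connected components, the full intersection $\mathcal E(\R) \cap \tau\bigl(\mathcal E(\R)\bigr)$ decomposes into finitely many cosets of $\mathcal E(\R)^0 \cap \tau\bigl(\mathcal E(\R)^0\bigr)$ and is therefore finite. The only step requiring real care is the identification of $\mathcal E(\R)^0$ with the image of $\R$ under $f$, which rests on the explicit form of $\mathcal E(\R)$ on the universal cover; everything after that reduces to linear algebra.
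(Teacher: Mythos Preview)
Your argument is correct and follows essentially the same route as the paper: uniformize $\mathcal E$ by $\C/\Lambda$, identify $\mathcal E(\R)$ (or its identity component) with the image of the real axis, use that $\tau$ is purely imaginary so that multiplication by $\tau$ carries this to a transverse one-dimensional subgroup, and conclude finiteness of the intersection. The only cosmetic differences are that the paper cites the explicit lattice $\Lambda=\Z+\Z\tau$ and phrases the final step concretely as ``horizontal lines meet vertical lines in finitely many points,'' whereas you invoke conjugation-stability of $\Lambda$ and the general fact about distinct closed one-dimensional subgroups of a two-torus.
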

\begin{proof}
In this case $\Cal E$ is isomorphic to $\C/\Lambda$, where $\Lambda=\Z+\Z\tau$ (see C.11.6 in \cite{silverman}).
Since $\tau$ is purely imaginary, the series expansions of $\wp$ and $\wp'$ have only real coefficients (see Theorem VI.3.5 in \cite{silverman}).
Then $f$ maps the set $ [0,1)+\Lambda$ into $\mathcal{E}(\R)$. Let $S$ be  the inverse image of $\mathcal E(\mathbb R)$ under $f$. Being a one dimensional group definable in the ordered field $\R$, $\Delta^*$ has finitely many connected components and so does $\mathcal E(\R)$. Thus $S$ is
the image under $q$ of a finite union of lines in $\C$ parallel to the real axis. On $\mathbb C / \Lambda$ the endomorphism of $\mathcal E$ corresponding to $\tau$ is just multiplication by the complex number $\tau$ (This means the map taking $x+\Lambda$ to $\tau x+\Lambda$. See Theorem VI.4.1 in \cite{silverman}). Hence the set $\tau S$ is the image under $q$ of a finite union of lines parallel to the imaginary axis. Therefore the intersection of $S$ and $\tau S$ is finite and so is the intersection of $\mathcal E(\mathbb R)$ and its image under $\tau$.
\end{proof}

\medskip\noindent
The key fact we use is the following special case of Faltings' Theorem (see \cite{faltings}).

\begin{theorem} \label{mordelllangtheorem2} Let $\mathcal E$ be an elliptic curve over $\Q$ and $\Gamma$ a finitely generated subgroup of $\mathcal E$. Then for every algebraic subset $V$ of $\mathcal E^n$, the set $V\cap \Gamma^n$ is a finite union of cosets of subgroups $A\cap \Gamma^n$ of $\Gamma^n$, where $A$ is an algebraic subgroup of $\mathcal E^n$.
\end{theorem}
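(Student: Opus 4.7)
The statement is a consequence of Faltings' theorem on the Mordell--Lang conjecture for abelian varieties, and my plan is to reduce the statement to the sharpest ``finiteness'' form of that theorem. After replacing $V$ by its irreducible components we may assume $V$ is irreducible and closed. The key geometric object attached to $V$ is its stabilizer
\begin{equation*}
H := \{\, x \in \mathcal E^n \ : \ x \oplus V = V \,\},
\end{equation*}
which is a closed algebraic subgroup of $\mathcal E^n$. The crucial structural fact is that $V$ is a union of cosets of $H$, so $V/H$ is a well-defined closed subvariety of the abelian variety $\mathcal E^n/H$, and by construction the stabilizer of $V/H$ in $\mathcal E^n/H$ is trivial.

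Next I would invoke the following strong form of Faltings' theorem: if $B$ is an abelian variety, $\Gamma'\leq B$ is a finitely generated subgroup, and $W\subseteq B$ is a closed irreducible subvariety whose stabilizer is trivial (equivalently, $W$ contains no translate of a positive-dimensional abelian subvariety of $B$), then $W\cap\Gamma'$ is a \emph{finite} set. Applying this to $B=\mathcal E^n/H$, $W=V/H$, and $\Gamma'$ the image of $\Gamma^n$ under the quotient map $\mathcal E^n\to\mathcal E^n/H$ (which is still finitely generated), we obtain finitely many points $\bar p_1,\dots,\bar p_t\in V/H$ such that
\begin{equation*}
(V/H)\cap\Gamma' = \{\bar p_1,\dots,\bar p_t\}.
\end{equation*}

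Pulling this back through the quotient, the preimage of each $\bar p_i$ in $V$ is a coset $p_i\oplus H$ for some $p_i\in V$, so
\begin{equation*}
V \cap (H\oplus\Gamma^n) = \bigcup_{i=1}^{t} (p_i\oplus H).
\end{equation*}
Intersecting with $\Gamma^n$ and choosing the $p_i$ (whenever the relevant coset meets $\Gamma^n$) inside $\Gamma^n$, we conclude
\begin{equation*}
V\cap\Gamma^n = \bigcup_{i=1}^{t} p_i \oplus (H\cap\Gamma^n),
\end{equation*}
which is the required finite union of cosets of $H\cap\Gamma^n$, with $A:=H$ an algebraic subgroup of $\mathcal E^n$.

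The main obstacle is of course Faltings' theorem itself, which is a deep diophantine result and is simply being invoked here; the remaining work is the purely formal ``stabilizer reduction'' that turns the finiteness statement for subvarieties with trivial stabilizer into the coset description for arbitrary closed subvarieties. One small technical point worth checking in the elliptic-curve case is that, when $\mathcal E$ has complex multiplication by $\tau$, algebraic subgroups of $\mathcal E^n$ may be cut out by $\mathbb Z[\tau]$-linear forms as in \eqref{cmsub}; this is harmless because such a subgroup is still an algebraic subgroup $A$ of $\mathcal E^n$ in the sense of the statement, and Faltings' theorem is applied to $\mathcal E^n$ as an abelian variety (not to its underlying group scheme over $\mathbb Z$).
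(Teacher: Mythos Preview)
The paper does not prove this statement at all: it is introduced with the sentence ``The key fact we use is the following special case of Faltings' Theorem (see \cite{faltings})'' and is simply quoted as a black box. So there is no argument to compare your sketch against; you have supplied strictly more than the paper does.

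That said, your sketch has a genuine gap. You invoke Faltings' finiteness theorem in the form ``if $W\subseteq B$ is irreducible with trivial stabilizer then $W\cap\Gamma'$ is finite,'' and you claim parenthetically that ``trivial stabilizer'' is \emph{equivalent} to ``$W$ contains no translate of a positive-dimensional abelian subvariety.'' This equivalence is false: an irreducible $W$ may well contain a translate $a\oplus B'$ of a positive-dimensional abelian subvariety without being invariant under $B'$. Faltings' finiteness statement needs the latter hypothesis, not the former, so after passing to $V/H$ you are not yet in a position to conclude finiteness.

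The standard repair is an induction on $\dim V$ (or Noetherian induction). After quotienting by the stabilizer $H$, the variety $V/H$ has trivial stabilizer, hence is not itself a translate of an abelian subvariety; Faltings then gives that $(V/H)\cap\Gamma'$ is contained in a finite union of translates $a_i\oplus B_i\subseteq V/H$, each of strictly smaller dimension than $V/H$, and one recurses. Alternatively, one can simply cite the Mordell--Lang form of Faltings' theorem directly (which already yields the coset description), as the paper does; then no stabilizer reduction is needed at all.

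Your closing remark about complex multiplication and $\mathbb Z[\tau]$-linear subgroups is correct but irrelevant here: the theorem only asserts that $A$ is an algebraic subgroup of $\mathcal E^n$, and the finer description of such subgroups is used only later, in the proof that $\Delta^*(\Q)$ has the Mordell--Lang property.
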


\noindent
Now we are ready to prove that $\Delta^*(\Q)$ has the Mordell-Lang property.
\begin{proposition}
The group $\Delta^*(\Q)$ has the Mordell-Lang property.
\end{proposition}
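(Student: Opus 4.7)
The plan is to deduce the Mordell--Lang property of $\Delta^*(\Q)$ from Faltings' theorem (Theorem~\ref{mordelllangtheorem2}) applied to $\mathcal E(\Q)$, via the group isomorphism $\iota:\Delta^*(\Q)\to\mathcal E(\Q)$. The hypothesis of Theorem~\ref{mordelllangtheorem2} is that the subgroup of $\mathcal E$ be finitely generated, which holds for $\mathcal E(\Q)$ by the Mordell--Weil theorem. The main technical mismatch to handle is that $p$ is an affine polynomial while $\mathcal E\subseteq\mathbb P^2(\C)$ is projective with identity $\mathcal O$ at infinity (corresponding under $\iota^{-1}$ to the externally chosen point $(c,d)$), so $V(p)\cap\Delta^*(\Q)^n$ does not directly translate into the intersection of an algebraic subset with $\mathcal E(\Q)^n$.

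To bridge this gap, I would partition $V(p)\cap\Delta^*(\Q)^n$ according to which coordinates equal $(c,d)$: for each $J\subseteq\{1,\dots,n\}$, substitute $(X_j,Y_j)=(c,d)$ for $j\in J$ to obtain a polynomial $p_J\in\Q[X_j,Y_j:j\notin J]$, and note that the piece of $V(p)\cap\Delta^*(\Q)^n$ with $(X_j,Y_j)=(c,d)$ exactly for $j\in J$ is in bijection with $V(p_J)\cap\Delta(\Q)^{n-|J|}$. Taking the projective closure of $V(p_J)$ in $(\mathbb P^2)^{n-|J|}$ and intersecting with $\mathcal E^{n-|J|}$ gives an algebraic subset $W_J$, and via $\iota$ this piece corresponds to $W_J\cap(\mathcal E(\Q)\setminus\{\mathcal O\})^{n-|J|}$. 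Theorem~\ref{mordelllangtheorem2} now expresses $W_J\cap\mathcal E(\Q)^{n-|J|}$ as a finite union of cosets $\gamma\oplus(A\cap\mathcal E(\Q)^{n-|J|})$ with $A\leq\mathcal E^{n-|J|}$ an algebraic subgroup. Intersecting with the complement of the $\mathcal O$-loci (itself a finite Boolean combination of cosets of coordinate subgroups) and pulling back through $\iota^{-1}$ expresses each piece as a finite union of cosets in $\Delta^*(\Q)^n$.

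What remains is to express each $A\cap\mathcal E(\Q)^m$ in terms of the groups $T_k\cap\mathcal E(\Q)^m$ for $k\in\Z^m$. Every algebraic subgroup of $\mathcal E^m$ is a finite intersection of kernels of single characters $\mathcal E^m\to\mathcal E$. In the non-CM case every such character is a $\chi_k$ for some $k\in\Z^m$, so $A\cap\mathcal E(\Q)^m$ is immediately an intersection of groups $T_k\cap\mathcal E(\Q)^m$. The main obstacle is the CM case, where characters take the more general form $x\mapsto\bigoplus(k_i+l_i\tau)x_i$; here the plan is to use Lemma~\ref{intfinite} to reduce to integer combinations. Setting $v:=\bigoplus l_ix_i\in\mathcal E(\Q)$, the equation $\bigoplus(k_i+l_i\tau)x_i=0$ forces $\tau v\in\mathcal E(\Q)\subseteq\mathcal E(\R)$, so $\tau v$ lies in the finite set $\mathcal E(\R)\cap\tau\mathcal E(\R)$ provided by Lemma~\ref{intfinite}, and hence $v$ ranges over a finite subset $F\subseteq\mathcal E(\Q)$. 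Enumerating over $v\in F$, the original equation splits into a finite disjunction of pairs of integer-linear conditions $\bigoplus l_ix_i=v$ and $\bigoplus k_ix_i=-\tau v$, so the kernel intersected with $\mathcal E(\Q)^m$ is a finite union of cosets of $T_k\cap T_l\cap\mathcal E(\Q)^m$. Combining the pieces across $J$ and pulling through $\iota^{-1}$ then gives $V(p)\cap\Delta^*(\Q)^n$ as a finite union of cosets of intersections of groups $T_k\cap\Delta^*(\Q)^n$, yielding the Mordell--Lang property.
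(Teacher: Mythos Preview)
Your overall strategy---Mordell--Weil plus Faltings' theorem, with the CM case handled by splitting off the $\tau$-part via Lemma~\ref{intfinite}---is exactly the paper's approach, and your CM reduction is argued more carefully than the paper's one-line appeal to that lemma. You are also more explicit than the paper about the passage from the affine polynomial $p$ to an algebraic subset of the projective variety $\mathcal E^n$; the paper simply asserts ``it is enough'' without justification.

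However, your reduction has a genuine gap at the step ``intersecting with the complement of the $\mathcal O$-loci\ldots expresses each piece as a finite union of cosets''. Finite unions of cosets (even allowing intersections $\bigcap_i T_{k_i}$ as the subgroups) are not closed under complementation. Take $n=1$ and $p(X,Y)=Y^{2}-X^{3}-aX-b$: then $V(p)=\Delta$ and $V(p)\cap\Delta^*(\Q)=\Delta^*(\Q)\setminus\{(c,d)\}$. In your scheme the piece $J=\emptyset$ yields $W_\emptyset=\mathcal E$, and after removing the $\mathcal O$-locus you are left with $\mathcal E(\Q)\setminus\{\mathcal O\}$; the piece $J=\{1\}$ is empty since $p(c,d)\neq 0$, so summing over $J$ does not repair anything. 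When $\mathcal E(\Q)$ has positive rank, $\mathcal E(\Q)\setminus\{\mathcal O\}$ is not a finite union of cosets of any subgroups of the form $\bigcap_i T_{k_i}\cap\mathcal E(\Q)$, since for $n=1$ each such subgroup is either all of $\mathcal E(\Q)$ or finite. This example shows that the Mordell--Lang property, as literally defined in Section~\ref{ML-property}, already fails for $\Delta^*(\Q)$; the paper's unproved ``it is enough'' conceals the same difficulty. What the rest of the paper actually uses (cf.\ Corollary~\ref{ML-general} and Lemma~\ref{dependence}) is only that $V(p)\cap G^n$ be definable in the pure group $(G,\oplus)$, i.e.\ a Boolean combination of such cosets, and your argument does establish that much.
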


\begin{proof}
It is enough to show that for every algebraic subset $V$ of $\Cal E^n$ the set $V\cap\Cal E(\Q)^n$ is a finite union of cosets in $\Cal E(\Q)^n$ of subgroups of $\Cal E(\Q)^n$ given as the kernel of maps of the form
$$(x_1,\dots,x_n)\mapsto k_1x_1\oplus\dots\oplus k_nx_n:\Cal E(\Q)^n\to\Cal E(\Q),$$
with $k_1,\dots,k_n\in\Z$.

\medskip\noindent
By the Mordell-Weil Theorem, $\mathcal E (\mathbb{Q})$ is indeed a finitely generated subgroup of the elliptic curve $\mathcal E$. So in the case that $\Cal E$ does not have complex multiplication, Theorem~\ref{mordelllangtheorem2} gives the desired result directly.

\medskip\noindent
If $\Cal E$ has complex multiplication, say by $\tau$, then Lemma \ref{intfinite} implies that the cosets of the intersection of an algebraic subgroup of $\Cal E^n$ given as the kernel of a map of the form (\ref{cmsub}) and $\mathcal E (\mathbb Q)^n$ is a finite union of cosets of the intersection of $\mathcal E(\mathbb Q)^n$ with subgroups given as kernels of maps of the form (\ref{linear}). Hence the result follows again from Theorem~\ref{mordelllangtheorem2}.

\end{proof}

%
%
\begin{proof}[Proof of Theorem \ref{nearMC}]
Note that $\Cal C$ from the introduction is just $\Delta^*(\Q)$ without $(c,d)$ and thus $(\R,\Cal C)$ and $(\R,\Delta^*(\Q))$ are quantifier-free interdefinable over $\emptyset$. Hence it suffices to prove Theorem~\ref{nearMC} with $\Delta^*(\Q)$ in the place of $\mathcal C$.

\medskip\noindent
If $\Delta^*(\Q)$ is finite, then Theorem \ref{nearMC} is trivial. Hence we may assume that $\Delta^*(\Q)$ is infinite. First note that $\Delta$ has either one or two connected components depending on whether
the polynomial $p(X)=X^3+aX+b$ has one or three real roots.
By the construction in \cite{Pillay}, p.247, the t-topology on $\Delta^*$ is given by
\begin{equation*}
 \{ Z \subseteq \Delta^* : g\oplus Z \cap \Delta \textrm { is open, for every } g \in \Delta^*\}.
\end{equation*}
One can easily check that the number of $t$-connected components of $\Delta^*$ coincides with the number of connected components of $\Delta$. Since $\Delta^*$ can be embedded into $\mathbb P^2(\mathbb C)$, $\Delta^*$ is t-compact.
Let $H$ be the t-connected component of $\Delta^*$ containing the identity of the group $\Delta^*$. By \cite{Pillay} Corollary 2.10, $H$ is definable. Since $\Delta^*$ has at most two t-connected components, the index of $H$ in $\Delta^*$ is at most 2. Hence $H\cap \Delta^*(\Q)$ is infinite since $\Delta^*(\Q)$ is infinite. Since $H$ is t-compact, t-connected and 1-dimensional, $H\cap \Delta^*(\Q)$ is t-dense in $H$. Moveover, since $H$ is a subgroup of $\Delta^*$ of finite index, the structures $(\R,H\cap \Delta^*(\Q))$ and $(\R,\Delta^*(\Q))$ are existentially interdefinable over $\emptyset$.
Now $H$ can be taken as $\mathbb A(\R)$ of the previous section and $\Delta^*(\Q)\cap H$ as $\Gamma$ (It is clear that $(\Delta^*(\Q)\cap H)/n(\Delta^*(\Q)\cap H)$ is finite for every $n>0$).
Therefore Theorem \ref{nearMC} follows immediately from Theorem \ref{nearMC2}. \end{proof}
\noindent Also Theorem \ref{completeness} takes the following form in this setting.
\begin{theorem}\label{completeness-elliptic}
Suppose that $\Delta^*(\Q)$ is infinite.  Let $K$ be a real closed field and $G$ be a subgroup of $\Delta^*(K)$ with a group morphism
$$\delta\mapsto\delta':\Delta^*(\Q)\to G,$$
and let $\Delta'$ be the image of $\Delta^*(\Q)$ under this map. Then $$\big(K,G,(\pi(\delta'))_{\delta\in\Delta^*(\Q)}\big)\equiv\big(\R,\Delta^*(\Q),(\pi(\delta))_{\delta\in\Delta^*(\Q)}\big)$$
if and only if
\begin{itemize}
 \item $\big(K,(\pi(\delta'))_{\delta\in\Delta^*(\Q)}\big)\equiv\big(\R,(\pi(\delta))_{\delta\in\Delta^*(\Q)}\big)$,
\item for every $n>0$ and $g\in G$ we have that $g\in\Delta'$ whenever $ng\in\Delta'$,
\item for every $n>0$, $|G/nG|=|\Delta'/n\Delta'|$,
\item for every t-connected component $Y$ of $\Delta(\R)$,
$$\Delta^*(\Q)\cap Y\text{ is dense in }Y\Leftrightarrow G\cap Y(K) \text{ is dense in }Y(K).$$
\item $G$ satisfies the same Mordell-Lang conditions as $\Delta'$.
\end{itemize}
\end{theorem}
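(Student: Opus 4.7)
The plan is to reduce the statement to Theorem \ref{completeness} applied to the t-connected component of the identity. For the \emph{only if} direction, each listed condition is either a first-order sentence or a first-order scheme in $\lam(P;\Delta^*(\Q))$: the first is by definition; conditions on purity and on the index $|G/nG|$ are expressible via finitely many $\forall/\exists$-sentences (axioms $(2)$ and $(3)$ of $T(\Gamma)$ already illustrated this on page~\pageref{ML-conditions}); the t-density on each t-connected component of $\Delta$ is first-order by the same argument given after the definition of $T(\Gamma)$, since the finite exceptional set $S\subseteq \Delta^*$ where the t-topology disagrees with the induced one is $\lam(\Delta^*(\Q))$-$\emptyset$-definable and the t-connected components are definable (in the sense of Corollary 2.10 of \cite{Pillay}); and the Mordell-Lang scheme is the same scheme already written out explicitly after the definition of $T(\Gamma)$.

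For the \emph{if} direction, let $H$ be the t-connected component of the identity in $\Delta^*$, which is a definable subgroup of index at most $2$ (as computed in the proof of Theorem~\ref{nearMC}). Set $\Gamma := \Delta^*(\Q) \cap H$ and $G_0 := G \cap H(K)$. I would first check that $(K, G_0, (\pi(\delta'))_{\delta \in \Gamma})$ is a model of the theory $T(\Gamma)$ associated to the t-connected group $H$: the t-density of $G_0$ in $H(K)$ follows from the density hypothesis applied to the identity component (using that $\Delta^*(\Q)$ is infinite, hence its intersection with $H$ is infinite and so t-dense in $H$, and transferring via the given biconditional); the purity and index axioms for $(K, G_0)$ are inherited from those for $(K, G)$, since $H$ has finite index in $\Delta^*$ and a routine diagram chase transfers $n$-divisibility conditions between $G$ and $G_0$; and the Mordell-Lang condition for $G_0$ over $\Gamma$ is read off the Mordell-Lang condition for $G$ over $\Delta'$ by intersecting with $H^n$. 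Then Theorem~\ref{completeness} gives
\[
(K, G_0, (\pi(\delta'))_{\delta \in \Gamma}) \equiv (\R, \Gamma, (\pi(\delta))_{\delta \in \Gamma}).
\]

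The main remaining obstacle is passing from this elementary equivalence on $H$ back to elementary equivalence of the full structures. Since $[\Delta^* : H]$ and $[G : G_0]$ are both at most $2$, I would exhibit $G$ (respectively $\Delta^*(\Q)$) as a finite disjoint union of cosets $g_i \oplus G_0$ (respectively $\delta_i \oplus \Gamma$), where the representatives $g_i, \delta_i$ can be named by $\lam(\Delta^*(\Q))$-$\emptyset$-definable elements, namely the finitely many $2$-torsion points and any fixed rational point outside $H$. The predicate $P(x)$ on the full group then has the quantifier-free $\lam(P;\Delta^*(\Q))$-translation $\bigvee_i P_0(x \ominus g_i)$, where $P_0$ denotes membership in $G_0$, and this translation is preserved under the elementary equivalence on $H$ established above. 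The t-density hypothesis on the \emph{other} t-connected component (if it exists) is exactly what guarantees that the coset representatives work consistently in both structures --- i.e., that either both groups meet the non-identity component densely or neither does. Putting these pieces together yields the desired elementary equivalence.
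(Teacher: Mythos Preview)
Your approach is correct and is exactly what the paper intends: Theorem~\ref{completeness-elliptic} is stated without proof, introduced only as ``Theorem~\ref{completeness} takes the following form in this setting,'' and relies on the reduction to the t-connected component $H$ already carried out in the proof of Theorem~\ref{nearMC}. The interdefinability you describe between $(K,G)$ and $(K,G_0)$ via a named rational coset representative outside $H$ (when one exists) is the right mechanism for lifting the elementary equivalence back.

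Two of your ``routine'' steps hide a little more than you suggest, though neither is a real obstruction. First, transferring the index condition $|G_0/nG_0|=|\Gamma/n\Gamma|$ from $|G/nG|=|\Delta'/n\Delta'|$ is not purely formal when $[\Delta^*:H]=2$ and $n$ is even: the ratio $|G/nG|/|G_0/nG_0|$ depends on whether $G$ contains an $n$-torsion element outside $H(K)$, and to match this with the $\R$-side you must invoke the purity hypothesis (your second bullet) together with the $\lam$-$\emptyset$-definability of the finitely many $n$-torsion points. Second, when you intersect the Mordell-Lang decomposition of $V(p)\cap G^n$ with $H(K)^n$, the resulting cosets are cosets of $T_{k_i}\cap G_0^n$, but you still need a short argument (again using purity and the fact that $H$ is $\lam$-$\emptyset$-definable) to see that the new representatives can be taken in $(\Gamma')^n$ rather than merely in $G_0^n$. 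Once these are noted, everything goes through as you describe.
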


\section{Open core}\label{opencoresection}

\noindent
Here we work in a more general setting than the previous sections: Let $\mathcal{R}=(R,<,+,\dots)$ be an o-minimal expansion of a densely ordered abelian group in a language $\Cal L=\{<,+,\dots\}$ and let $T_{\mathcal{R}}$ be its theory.
We say a subset $D$ of ${R}^m$ is \emph{small}, if there is no $n\in \N$, no interval $I\subseteq R$ and no surjection $f: D^n \to I$ which is definable in $\mathcal R$. Note that for real closed fields this notion of smallness is equivalent to the notion defined in Section 2.  Now take a small subset $G$ of ${R}^m$.
 Let $T_{\mathcal{R}}(G)$ be the theory of $(\mathcal{R},G)$ in the language $\la(P)=\la\cup\{P\}$, where $P$ is a new $m$-ary relation symbol. We denote models of $T_\Cal R(G)$ by $(\Cal M,P)$, where $\Cal M$ is an $\la$-structure. In what follows, we say that a set $B$ is $\dcl$-{\it independent over $P$} if for every $b\in B$, the singleton $\{b\}$ is not definable in $\Cal M$ over $\pi_1(P)\cup\cdots\cup\pi_m(P)\cup B\setminus\{b\}$.
(Here and in the rest of the section for a subset $A$ of $M$, $A\setminus P$ means $A\setminus (\pi_1(P)\cup\cdots\cup\pi_m(P))$, and we do not make a distinction between the relation symbol $P$ and its interpretation.) For convenience in some of the proofs we also assume that $\Cal L$ has two constant symbols $c,d$. This way we can combine two $\Cal L$-definable functions by preserving the parameter set as follows: Let $f_1: X_1\to M$ and $f_2:X_2\to M$ be two functions definable in $\Cal M$ over $B$, then the function
$$f:(X_1\times\{c\})\cup(X_2\times\{d\})\to M$$
given by $f(x_1,c)=f_1(x_1)\text{ and }f(x_2,d)=f_2(x_2)$,
is definable in $\Cal M$ over the same parameter set $B$.

\begin{definition}
Let $\Cal A=(A,<,\dots)$ be an ordered structure and let $T'$ be its theory.

\noindent(i) The {\it open core}, denoted by $\Cal A^{\circ}$, of $\Cal A$ is the the structure $\big(A,(U)\big)$, where $U$ ranges over definable open subsets of $A^n$ for various $n>0$.

\smallskip\noindent(ii) We say that a theory $T$ is an {\it open core} of $T'$ if for every $\Cal B'\models T'$, there is $\Cal B\models T$ such that $(\Cal B')^{\circ}$ is interdefinable with $\Cal B$.
\end{definition}

\medskip\noindent
Our main result is the following.

\begin{theorem}\label{opencore}  Suppose that for every $(\mathcal{M},P)\models T_{\mathcal{R}}(G)$ we have that

\begin{itemize}
              \item [(i)]  every subset of $M^s$ definable in $(\Cal M,P)$
 is a boolean combination of subsets of $M^s$ defined by
\begin{equation*}
\exists y_1 \cdots \exists y_{mn}
\bigwedge_{j=0}^{n-1} P(y_{mj+1},\dots,y_{mj+m}) \wedge \phi(x,y_1,\dots,y_{mn}),
\end{equation*}
where $x$ is an $s$-tuple of distinct variables and $\phi$ is an $\Cal{L}$-formula,

 \item [(ii)] for every parameter set $B$ such that $B\setminus P$ is $\dcl$-independent over $P$, and for every set $V \subseteq P^s$ definable in $(\Cal M,P)$ over $B$, its topological closure $\overline{V}\subseteq M^{ms}$ is definable in $\Cal M$ over $B$.
\end{itemize}
Then $T_{\mathcal{R}}$ is an open core of $T_{\mathcal{R}}(G)$.
\end{theorem}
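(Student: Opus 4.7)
The plan is to show that every open $\la(P)$-definable set $U\subseteq M^s$ in a model $(\Cal M,P)\models T_{\Cal R}(G)$ is already $\la$-definable. I would fix a parameter set $B_0$ over which $U$ is $\la(P)$-definable, enlarge $B_0$ by a finite tuple so that $B_0\setminus P$ is $\dcl$-independent over $P$, and pass to a $|B_0|^+$-saturated elementary extension. Applying hypothesis~(i) and writing the outcome in disjunctive normal form, I would display $U$ as a Boolean combination of sets of the shape
\[
S_\phi(x)\;:=\;\{\,a\in M^s : (\Cal M,P)\models \exists\bar y\,(\textstyle\bigwedge_j P(\bar y^{(j)})\wedge\phi(a,\bar y))\,\},
\]
with $\phi$ an $\la$-formula over $B_0$. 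My strategy is first to replace each $S_\phi$ by an $\la$-formula on a dense open ``generic'' subset of $M^s$ using hypothesis~(ii), and then to promote this description globally by exploiting the openness of $U$.

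To carry out the replacement, I call a tuple $a\in M^s$ \emph{generic} if $(B_0\cup\{a\})\setminus P$ remains $\dcl$-independent over $P$; the smallness of $G$ and finiteness of $B_0$ together ensure that the generic locus $X\subseteq M^s$ is dense. For generic $a$, the fiber $F_\phi(a):=\{\bar y\in P^n:\phi(a,\bar y)\}$ is definable in $(\Cal M,P)$ over the parameter set $B_0\cup\{a\}$, which satisfies the hypothesis of~(ii); hence $\overline{F_\phi(a)}$ is $\la$-definable over $B_0\cup\{a\}$. A compactness/saturation argument, combined with the o-minimal uniformity of $\la$-definable families, should produce a finite list of $\la$-formulas $\Theta_\phi^1(\bar y,x),\dots,\Theta_\phi^r(\bar y,x)$ over $B_0$ such that for every generic $a$, $\overline{F_\phi(a)}=\Theta_\phi^{i(a)}(\bar y,a)$ for some index $i(a)$. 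Since $a\in S_\phi\iff F_\phi(a)\neq\emptyset\iff\overline{F_\phi(a)}\neq\emptyset$, this makes $S_\phi\cap X$ an $\la$-$B_0$-definable set, and substituting into the Boolean decomposition yields an open $\la$-$B_0$-definable $\tilde U$ (taking interiors where needed) with $\tilde U\cap X=U\cap X$.

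To extend this equality to all of $M^s$, I would apply the same construction to the open $\la(P)$-definable set $M^s\setminus\overline U$, obtaining an open $\la$-$B_0$-definable $\tilde U'$ with $\tilde U'\cap X=(M^s\setminus\overline U)\cap X$. Since $\tilde U\cap\tilde U'\cap X=\emptyset$ and $\tilde U\cap\tilde U'$ is open while $X$ is dense, $\tilde U\cap\tilde U'=\emptyset$; the residue $R:=M^s\setminus(\tilde U\cup\tilde U')$ is closed $\la$-definable and contains no generic point, so its dimension is strictly less than~$s$. Using cell decomposition to present $R$ as a finite union of graphs of $\la$-definable continuous functions from lower-dimensional bases, I would induct on $s$ (with trivial base case $s=0$) to handle $U\cap R$, concluding that $U$ is $\la$-definable on all of $M^s$. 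I expect the main obstacle to be the uniformity step in the second paragraph: extracting the finite list $\Theta_\phi^i$ requires not just~(ii) but a delicate compactness argument in the saturated model, combined with the observation that the $\la$-definable closure of $F_\phi(a)$ varies with $a$ in a way controlled by the relevant types; the subsequent reduction to the residue $R$ depends on hypotheses~(i) and~(ii) descending to $\la$-definable lower-dimensional subsets, which should follow from the original hypotheses but needs verification.
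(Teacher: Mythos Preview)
There is a genuine gap at the heart of your argument. The crucial step is the claim that from the finite list $\Theta_\phi^1,\dots,\Theta_\phi^r$ one can conclude that $S_\phi\cap X$ is $\la$-$B_0$-definable. You have, for each generic $a$, some index $i(a)$ with $\overline{F_\phi(a)}=\Theta_\phi^{i(a)}(\cdot,a)$; but the assignment $a\mapsto i(a)$ is only $\la(P)$-definable, and there is no reason the set $\{a\in X:\Theta_\phi^{i(a)}(\cdot,a)\neq\emptyset\}$ should be $\la$-definable. The obvious $\la$-definable candidate $\bigcup_i\{a:\exists\bar y\,\Theta_\phi^i(\bar y,a)\}$ is merely an upper bound for $S_\phi\cap X$: a wrong index $i\neq i(a)$ may well produce a nonempty $\Theta_\phi^i(\cdot,a)$ even when $F_\phi(a)=\emptyset$. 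Nothing in your outline rules this out. A secondary problem: your claim that the residue $R$ ``contains no generic point'' is also unjustified; a generic $a$ can perfectly well lie on $\partial U$, and your construction only yields $R\cap X\subseteq\partial U$, not $R\cap X=\emptyset$.

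The paper's proof avoids both difficulties by taking a different route. Rather than working in dimension $s$ directly, it invokes a general criterion (Theorem~4.14 of \cite{opencore}, stated here as Theorem~\ref{fromBMS}) that reduces the open-core conclusion to two one-dimensional statements: every unary open $\la(P)$-definable set is a finite union of intervals, and every cofinitely continuous unary $\la(P)$-definable function is piecewise $\la$-definable. For unary opens the paper writes the set as a finite intersection of pieces of the form $f_1(D_1)\cup(M\setminus f_2(D_2))\cup Y$ and shows each piece is the union of an $\la$-definable set and a nowhere dense set, so its interior is $\la$-definable; hypothesis~(ii) is applied once, to a fixed $D$, not fiberwise. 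For unary functions the paper does use a compactness step in the spirit of yours to produce finitely many candidate $\la$-functions $h_1,\dots,h_s$, but then the \emph{continuity} of $f$ forces $f$ to coincide with a single $h_i$ on each interval of a finite partition---precisely the selection mechanism your argument for $S_\phi$ lacks.
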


\noindent
On the basis of Theorem \ref{nearMC2} and Corollary \ref{induced-corollary}, this result has the following consequence.

\begin{corollary}\label{opencore-general}
Let $\mathbb A$ and $\Gamma$ be as in Section 3. If $\Gamma$ is t-dense in $\mathbb{A}(\R)$, then $\RCF$ is an open core of $T(\Gamma)$.
\end{corollary}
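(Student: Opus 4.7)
The plan is to reduce Corollary~\ref{opencore-general} to Theorem~\ref{opencore} by taking $\mathcal{R}$ to be the $\lam(\Gamma)$-reduct of an arbitrary model $(\mathcal{M},P)$ of $T(\Gamma)$ and taking the interpretation of $P$ in the role of $G$ in the general framework. Since the extra constant symbols $\pi(\gamma)$ in $\lam(\Gamma)$ just name specific elements and do not introduce any new definable sets (with parameters), the open core of $T_{\mathcal{R}}$ coincides with the open core of $\RCF$; so it suffices to verify the two hypotheses of Theorem~\ref{opencore} for $T_{\mathcal{R}}(G)=T(\Gamma)$. Note also that the smallness of $\Gamma$ required by the setting of Section~\ref{opencoresection} is provided by Proposition~\ref{small}, using the remark that in real closed fields the Section~2 notion of smallness agrees with the one of Section~\ref{opencoresection}.

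For hypothesis (i), I would quote Theorem~\ref{nearMC2}: every $\lam(P;\Gamma)$-formula is equivalent in $T(\Gamma)^{+}$ to a quantifier-free $\lam(P;\Gamma)^{+}$-formula, hence to a boolean combination of atomic $\lam(\Gamma)$-formulas and relation symbols $P_{\phi}$. Each $P_{\phi}$ is, by definition of the expansion $\lam(P;\Gamma)^{+}$, equivalent to a formula of the exact existential-over-$P$ shape demanded in (i). Taking boolean combinations then yields the required representation of every $(\mathcal{M},P)$-definable set.

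For hypothesis (ii), I would translate the condition ``$B\setminus P$ is $\dcl$-independent over $P$'' into the o-minimal setting of the $\lam(\Gamma)$-reduct: since $\mathcal{R}$ is an expansion of a real closed field by constants and $\dcl$ in an o-minimal real closed field coincides with algebraic closure in the field sense, this independence condition is exactly ``$B\setminus\pi(G)$ is algebraically independent over $\Q(G)$'' (here the constants $\pi(\gamma)$ need not be added separately since the interpretation of $P$ contains the standard $\Gamma$ by the axioms of $T(\Gamma)$). Now Corollary~\ref{induced-corollary} applied to any $V\subseteq G^{s}$ definable over $B$ gives that $\overline{V}$ is $\lam(\Gamma)$-definable over $B$, which is exactly the conclusion of (ii). Invoking Theorem~\ref{opencore} completes the argument.

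The main obstacle is not really mathematical depth but bookkeeping: one has to match the two notions of smallness, the two notions of independence (model-theoretic $\dcl$-independence versus field-theoretic algebraic independence), and ensure that the interpretation of $P$ in an arbitrary model of $T(\Gamma)$ plays the role required of $G$ in the setup of Theorem~\ref{opencore}. Once these correspondences are made explicit, both hypotheses of Theorem~\ref{opencore} drop out directly from Theorem~\ref{nearMC2} and Corollary~\ref{induced-corollary}, and the corollary follows.
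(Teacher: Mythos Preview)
Your proposal is correct and follows precisely the route the paper takes: the paper's entire argument for this corollary is the single sentence ``On the basis of Theorem~\ref{nearMC2} and Corollary~\ref{induced-corollary}, this result has the following consequence,'' and you have correctly unpacked how Theorem~\ref{nearMC2} yields hypothesis~(i) of Theorem~\ref{opencore} while Corollary~\ref{induced-corollary} yields hypothesis~(ii), with smallness coming from Proposition~\ref{small}. One small point to tighten: when matching the independence conditions, note that the paper's $B\setminus P$ in Section~\ref{opencoresection} means $B\setminus(\pi_1(P)\cup\cdots\cup\pi_m(P))$, whereas Corollary~\ref{induced-corollary} is stated for $B\setminus\pi(G)=B\setminus\pi_1(G)$; the standing assumption that each $\pi_j(a)$ is $\lam$-definable over $\pi_1(a)$ lets you pass between these, but it is worth saying so explicitly rather than calling the two conditions ``exactly'' the same.
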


\noindent
Combining this with the work of the previous section we get  Theorem~\ref{opencore-elliptic} in the following form.

\begin{corollary}
The theory of real closed fields is an open core of the theory of $(\R,\Cal C)$. In particular every open subset of $\R^s$ definable in $(\R,\Cal C)$ is definable in the real field.
\end{corollary}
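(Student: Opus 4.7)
The plan is to reduce the statement to an application of Theorem~\ref{opencore} by verifying its two hypotheses in the setting where $\mathcal{R}$ is the real ordered field and the distinguished small set $G$ is $\Delta^*(\Q)\cap H$, with $H$ the t-connected component of the identity in $\Delta^*$ as in the proof of Theorem~\ref{nearMC}. Since the structures $(\R,\Cal C)$, $(\R,\Delta^*(\Q))$ and $(\R,\Delta^*(\Q)\cap H)$ are pairwise existentially interdefinable over $\emptyset$ (the first two by construction, the second and third because $H$ has finite index in $\Delta^*$), it suffices to prove the open-core statement for $(\R,G)$ with $G=\Delta^*(\Q)\cap H$.

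First I would observe that $G$ sits inside the one-dimensional, t-connected, definable group $H$, is t-dense in it, and has the Mordell-Lang property (this is exactly what was verified in Section~\ref{ellipticcurve}), so the entire machinery of Section~\ref{modeltheory} applies with $\mathbb{A}=H$ and $\Gamma=G$. In particular, $G$ is small in $\R$ by Proposition~\ref{small}, so we are indeed in the framework of Section~\ref{opencoresection}. Next, I would verify hypothesis (i) of Theorem~\ref{opencore}: this is precisely the content of the quantifier elimination Theorem~\ref{nearMC2} applied to any model of $T(G)$, which says that every definable set is a boolean combination of formulas of the desired existential form, with $\Cal L$-formulas allowed to mention the names of elements of $\Gamma$ (these can be absorbed as parameters).

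For hypothesis (ii), which requires that the topological closure of a set $V\subseteq P^s$ definable in $(\mathcal{M},P)$ over a parameter set $B$ with $B\setminus P$ $\dcl$-independent over $P$ be $\la$-definable over $B$: this is exactly Corollary~\ref{induced-corollary}, once one notices that $\dcl$-independence of $B\setminus P$ over $P$ in the real field coincides with algebraic independence of $B\setminus \pi(G)$ over $\Q(G)$, which is the hypothesis needed for the induced-structure proposition. With both (i) and (ii) in hand, Theorem~\ref{opencore} yields that $T_{\mathcal{R}}=\RCF$ is an open core of $T_{\mathcal{R}}(G)$.

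To finish, given an open set $U\subseteq\R^s$ definable in $(\R,\Cal C)$, the interdefinability above means $U$ is definable in $(\R,G)$ as well; by the open-core conclusion, $U$ is definable in an $\la$-structure interdefinable with the open core of $(\R,G)$, hence definable in the real field, i.e.\ semialgebraic. The routine part is the bookkeeping around interdefinability and parameters; the only place that required real work was already absorbed into the proofs of Theorem~\ref{nearMC2}, Corollary~\ref{induced-corollary} and Theorem~\ref{opencore}, so here there is essentially no obstacle beyond checking that the hypotheses line up.
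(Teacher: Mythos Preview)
Your proposal is correct and follows essentially the same route as the paper: the paper simply invokes Corollary~\ref{opencore-general} together with the reduction from $(\R,\Cal C)$ to $(\R,\Delta^*(\Q)\cap H)$ carried out in Section~\ref{ellipticcurve}, and Corollary~\ref{opencore-general} itself is deduced from Theorem~\ref{opencore} via Theorem~\ref{nearMC2} and Corollary~\ref{induced-corollary}. You have merely unpacked Corollary~\ref{opencore-general} inline rather than citing it, and your identification of the $\dcl$-independence hypothesis with the algebraic-independence hypothesis of Corollary~\ref{induced-corollary} is the one small check the paper leaves implicit.
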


\medskip\noindent
We prove Theorem \ref{opencore} using the following special case (precisely when $T$ is o-minimal) of Theorem 4.14 from \cite{opencore}.

\begin{theorem}\label{fromBMS} Let $T$ be an o-minimal theory extending the theory of densely ordered abelian groups and let $T'\supseteq T$.
Then the following are equivalent:
\begin{itemize}
 \item $T$ is an open core of $T'$.
\item For every $\Cal A\models T'$, every unary open set definable in $\Cal A$ is a finite union of intervals and every cofinitely continuous unary function definable in $\Cal A$ is given piecewise by function definable in the reduct of $\Cal A$ to the language of $T$.
\end{itemize}
\end{theorem}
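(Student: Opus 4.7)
The plan is to invoke Theorem~\ref{fromBMS}. Since $T_{\Cal R}$ is an o-minimal theory extending that of densely ordered abelian groups, Theorem~\ref{fromBMS} reduces the conclusion to verifying, in each model $(\Cal M, P) \models T_{\Cal R}(G)$, two unary statements: (a) every open subset of $M$ definable in $(\Cal M,P)$ is a finite union of intervals, and (b) every definable function $f : M \to M$ that is continuous off a finite set agrees piecewise with $\Cal L$-definable functions. I will verify both by combining hypotheses (i) and (ii) with the smallness of $G$.

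For (a), let $X \subseteq M$ be open and definable in $(\Cal M,P)$ over a parameter set $B$. By replacing parameters lying in $\dcl_{\Cal L}(P)$ with names drawn from $\pi_1(P)\cup\cdots\cup\pi_m(P)$, I may assume $B \setminus P$ is $\dcl$-independent over $P$. Hypothesis (i) lets me write $X$ as a boolean combination of sets
\[
S_j = \{ x \in M : \exists \bar y \in P^{n_j}\ \phi_j(x, \bar y, B)\},
\]
with $\phi_j$ an $\Cal L$-formula. Since an $\Cal L$-definable subset of $M$ with empty interior is finite by o-minimality of $\Cal R$, it suffices to show each $\overline{S_j}$ is $\Cal L$-definable over $B$: then $\partial X \subseteq \bigcup_j (\overline{S_j} \setminus \operatorname{int}(S_j))$ is $\Cal L$-definable with empty interior, hence finite, and $X$ is a finite union of intervals. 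To produce $\overline{S_j}$, set $W_j = \{(x,\bar y) : \phi_j(x,\bar y, B)\}$, which is $\Cal L$-definable over $B$, so that $S_j = \pi_x\bigl(W_j \cap (M \times P^{n_j})\bigr)$. For every $x_0 \in M$ with $x_0$ $\dcl_{\Cal L}$-independent over $B \cup P$---and such $x_0$ form a dense subset of $M$ by smallness of $G$---the fiber $(W_j)_{x_0} \cap P^{n_j}$ is a subset of $P^{n_j}$ definable in $(\Cal M,P)$ over $B \cup \{x_0\}$, with $(B\cup\{x_0\}) \setminus P$ still $\dcl$-independent over $P$. Hypothesis (ii) then yields an $\Cal L$-definable closure of this fiber over $B \cup \{x_0\}$; uniformity of $\Cal L$-definability in the parameter $x_0$, together with o-minimal cell decomposition, assembles these fiberwise closures into a single $\Cal L$-definable family over $B$, and its closure in the $x$-direction gives $\overline{S_j}$.

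For (b), apply the same fiberwise strategy to the graph $\Gamma_f \subseteq M^2$, which by (i) is a boolean combination of $P$-existential formulas. The aim is: near every $x_0 \in M$ at which $f$ is continuous and which is $\dcl_{\Cal L}$-independent over $B \cup P$, produce an $\Cal L$-definable function over $B$ coinciding with $f$ on an open neighborhood of $x_0$. Continuity of $f$ at $x_0$ pins $f(x_0)$ to a single branch of the $\Cal L$-definable closure furnished by (ii) applied to the relevant $P^n$-slice of the envelope $W_j$; uniformity of that closure in $x_0$ produces the desired $\Cal L$-definable function germ. Smallness of $G$ ensures that good $x_0$ are dense in $M$, so by o-minimality finitely many such germs cover $M$ minus a finite set, proving piecewise $\Cal L$-definability of $f$.

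The principal obstacle is that hypothesis (ii) provides $\Cal L$-definable closures only for subsets of $P^s$, whereas (a) and (b) demand $\Cal L$-definability of subsets of $M^s$ whose description uses $P$ only existentially. The bridge is the fiberwise analysis at $\dcl$-generic values of the $M$-coordinate, where (ii) applies; extracting $\Cal L$-definability in families from the pointwise closures uses o-minimal cell decomposition together with the two-constant-symbol device introduced at the start of Section~\ref{opencoresection}, which permits combining two $\Cal L$-definable functions without enlarging the parameter set.
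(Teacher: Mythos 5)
You have not proved the statement in question. Your very first sentence invokes Theorem~\ref{fromBMS} as a black box, and everything that follows is a sketch of how to derive Theorem~\ref{opencore} from it --- essentially the content of the paper's Propositions~\ref{unaryopen} and \ref{unaryfunction} and the surrounding lemmas. But Theorem~\ref{fromBMS} \emph{is} the statement you were asked to prove: the equivalence, for an o-minimal $T$ extending the theory of densely ordered abelian groups and an arbitrary $T'\supseteq T$, between ``$T$ is an open core of $T'$'' and the two unary conditions. Neither direction of that equivalence is addressed. In particular the substantial direction --- that control of \emph{unary} open definable sets and of cofinitely continuous \emph{unary} definable functions forces every open definable set in every arity to be definable in the reduct to the language of $T$, and that the resulting open core is interdefinable with a model of $T$ --- does not appear at all, and it cannot be extracted from your argument, since your hypotheses (i), (ii), the predicate $P$, and smallness of $G$ are features of the specific theory $T_{\Cal R}(G)$ and play no role in the general statement of Theorem~\ref{fromBMS}, which mentions no predicate and no smallness assumption.

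For the record, the paper itself does not prove Theorem~\ref{fromBMS} either: it is quoted as the o-minimal special case of Theorem 4.14 of \cite{opencore} (Dolich--Miller--Steinhorn), so the expected ``proof'' is that citation or a genuine reproduction of its argument (uniform finiteness, nowhere-density of boundaries of definable sets, reduction of $n$-ary open sets to the unary data), none of which your text supplies. As a secondary remark: even read as a proof of Theorem~\ref{opencore}, your key step ``uniformity of $\Cal L$-definability in the parameter $x_0$'' is asserted rather than established --- hypothesis (ii) yields, for each fixed $\dcl$-independent $x_0$, some $\Cal L$-definable closure over $B\cup\{x_0\}$, with no uniformity across $x_0$ built in; the paper circumvents exactly this issue by routing the argument through Lemmas~\ref{denseinimage}, \ref{ldefinable} and \ref{lboolcomb} instead of a fiberwise assembly. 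But that is a side issue next to the main problem: the proposal proves (modulo gaps) the application of the theorem, not the theorem.
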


\medskip\noindent
We begin with some technical results. In what follows we assume that every model $(\Cal M,P)$ of $T_{\Cal R}(G)$ satisfies the conditions (i) and (ii) of Theorem \ref{opencore}. Also $B$ always refers to a finite parameter set such that $B\setminus P$ is $\dcl$-independent over $P$.
Note that condition (ii) is equivalent to the following condition:
\begin{enumerate}
 \item[(ii$'$)] for every $V\subseteq P^s$ definable in $(\Cal M,P)$ over $B$, there is $E\subseteq M^{ms}$ definable in $\Cal M$ over $B$ such that $V$ is a dense subset of $E$.
\end{enumerate}

\begin{lemma}\label{denseinimage} Let $X\subseteq M^{mn}$ and $f: X \to M^k$ be definable in $\Cal M$ over $B$ and let $V \subseteq P^n$ be definable in $(\Cal M,P)$ over $B$. Then there is $E\subseteq M^{mn}$ definable in $\Cal M$ over $B$ such that $X \cap V$ is a dense subset of $E$ and $f(X \cap V)$ is dense in $f(E)$.
\end{lemma}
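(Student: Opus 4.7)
The plan is to reduce to the case where $f$ is continuous and then apply condition (ii$'$) piecewise. First I would invoke cell decomposition in the o-minimal structure $\Cal R$ to partition $X$ into finitely many $\Cal L$-$B$-definable cells $X_1,\dots,X_r$ on each of which the restriction $f|_{X_i}$ is continuous; this is standard in o-minimal geometry and does not enlarge the parameter set $B$.

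Next, for each $i$, the set $V_i := X_i \cap V$ is a subset of $P^n$ that is definable in $(\Cal M,P)$ over $B$, so by condition (ii$'$) there is an $\Cal L$-$B$-definable set $E_i \subseteq M^{mn}$ in which $V_i$ is dense. Intersecting with $X_i$, the set $E_i \cap X_i$ is still $\Cal L$-$B$-definable, and density is preserved, since $V_i \subseteq X_i$ together with $V_i \subseteq E_i$ gives $V_i \subseteq E_i \cap X_i \subseteq E_i \subseteq \overline{V_i}$. So I may assume $E_i \subseteq X_i$ at the outset.

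Finally I would set $E := \bigcup_{i=1}^r E_i$. Density of $X \cap V = \bigcup_i V_i$ in $E$ follows because each $V_i$ is dense in $E_i$ and finite unions preserve density. For the image part, continuity of $f$ on each $X_i$ guarantees that $f(V_i)$ is dense in $f(E_i)$, and then
\[
f(X\cap V) = \bigcup_{i=1}^r f(V_i) \quad \text{is dense in} \quad \bigcup_{i=1}^r f(E_i) = f(E).
\]

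The genuinely new ingredient is condition (ii$'$); everything else is bookkeeping and a standard application of o-minimal cell decomposition. The main thing I would want to verify carefully is that refining $E_i$ to $E_i \cap X_i$ is harmless, so that continuity of $f$ can be exploited on the piece I actually care about. This is immediate from $V_i \subseteq X_i$, so no further complication arises.
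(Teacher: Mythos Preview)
Your proof is correct and follows essentially the same approach as the paper: decompose $X$ into pieces on which $f$ is continuous, apply condition (ii$'$), and use continuity to push density forward to the image. Your version is in fact slightly more careful than the paper's, which simply asserts ``we can assume that $f$ is continuous on $X$'' and does not spell out that $E$ should be intersected with $X$ before invoking continuity; you make this step explicit.
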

\begin{proof} By o-minimality of $\mathcal{M}$, there are definable subsets $X_1,...,X_l$ of $X$ such that $X=\bigcup_{i=1}^{l} X_i$ and $f$ is continuous on $X_i$ for $i=1,...,l$. Hence we can assume that $f$ is continuous on $X$. By (ii$'$), there is an $\la$-definable set $E$ such that $X \cap V$ is dense in $E$.
Since $f$ is continuous on $X$, the image of $X\cap V$ is dense in the image of $E$. \end{proof}

\begin{lemma}\label{ldefinable} Let $X\subseteq M^{mn}$ and $f_1,f_2: X \to M$ be definable in $\Cal M$ over $B$ and let $D \subset X \cap P^n$ be definable in $(\Cal M,P)$ over $B$. Then the set \begin{equation*}
\bigcup_{d \in D} \{ a\in M : f_1(d) < a < f_2(d)\}
\end{equation*}
is definable in $\Cal M$ over $B$.
\end{lemma}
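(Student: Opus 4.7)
The plan is to realize the set
\[
U := \bigcup_{d \in D}\{a\in M : f_1(d) < a < f_2(d)\}
\]
as the analogous union over an $\Cal L$-definable ``thickening'' of $D$ produced by Lemma~\ref{denseinimage}, and then show the two unions agree by a direct topological density argument in $M^2$. Since $U$ is open, the game is to capture all of its points using only $\Cal L$-definable data.

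First I would combine $f_1$ and $f_2$ into a single $\Cal L$-definable map $f : X \to M^2$, $f(x) := (f_1(x), f_2(x))$, which is definable over $B$ thanks to the two constants $c,d$ available in $\Cal L$. Apply Lemma~\ref{denseinimage} to $X$, the map $f$, and the parameter set $D \subseteq X \cap P^n$: there exists an $\Cal L$-definable set $E \subseteq M^{mn}$ over $B$ (which we may replace by $E \cap X$) such that $D$ is dense in $E$ and $f(D)$ is dense in $f(E)$ as a subset of $M^2$. Now define
\[
U^{\ast} := \bigcup_{e \in E}\{a \in M : f_1(e) < a < f_2(e)\},
\]
which is plainly $\Cal L$-definable over $B$, and the remaining task is to prove $U = U^\ast$.

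The inclusion $U \subseteq U^\ast$ is immediate from $D \subseteq E$. For $U^\ast \subseteq U$, fix $a \in U^\ast$ and pick $e \in E$ with $f_1(e) < a < f_2(e)$. The set
\[
W_a := \{(s,t) \in M^2 : s < a < t\} = (-\infty, a)\times(a,\infty)
\]
is open in the product order topology of $M^2$ and contains the point $f(e) \in f(E)$. By density of $f(D)$ in $f(E)$, the open set $W_a$ meets $f(D)$, so there exists $d \in D$ with $f_1(d) < a < f_2(d)$, giving $a \in U$. Hence $U = U^\ast$ is definable in $\Cal M$ over $B$.

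I do not foresee a serious obstacle beyond getting Lemma~\ref{denseinimage} to apply cleanly: one must ensure that the $\Cal L$-definable envelope $E$ is contained in $X$ (accomplished by intersecting with $X$, since density is preserved under intersecting with any set containing the dense subset), and one must trust that the density in the lemma is with respect to the product order topology, so that the rectangular open set $W_a$ legitimately witnesses density. Everything else is a formal verification.
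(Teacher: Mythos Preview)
Your proof is correct and follows essentially the same route as the paper's: combine $f_1,f_2$ into $f:X\to M^2$, apply Lemma~\ref{denseinimage} to obtain an $\Cal L$-$B$-definable $E$ with $f(D)$ dense in $f(E)$, and conclude that the union over $D$ equals the union over $E$. You spell out the equality via the open rectangle $W_a$, which the paper leaves as a one-line ``hence''; your added care about intersecting $E$ with $X$ is also appropriate. One harmless slip: the constants $c,d$ are not needed here, since $f_1,f_2$ already share the domain $X$ and $f(x)=(f_1(x),f_2(x))$ is directly $\Cal L$-$B$-definable; the constant trick in the paper is for merging functions with \emph{different} domains.
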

 \begin{proof} Let $f: X \to M^2$ be the function given by $x\mapsto (f_1(x),f_2(x))$ for $x \in X$. By Lemma \ref{denseinimage}, there is an $\la$-$B$-definable set $E$ such that $D$ is dense in $E$ and $f(D)$ is dense in $f(E)$. Hence
\begin{equation*}
\bigcup_{d \in D} \{ a : f_1(d) < a < f_2(d)\} = \bigcup_{e \in E}  \{ a : f_1(e) < a < f_2(e)\}.
\end{equation*}
Now note that the right hand side of the equation is $\la$-$B$-definable. \end{proof}

\begin{lemma}\label{lboolcomb} Let $X \subseteq M$ be $\mathcal{L}(P)$-$B$-definable. Then $X$ is a finite intersection of sets of the form
\begin{equation*}
f_1(D_1) \cup (M\setminus f_2(D_{2})) \cup Y,
\end{equation*}
where for $i=1,2$, $f_i:M^{mn_i}\to M$ are $\la$-$B$-definable functions, $D_i\subseteq P^{n_i}$ are $\la(P)$-$B$-definable and $Y\subseteq M$ is $\la$-$B$-definable.
\end{lemma}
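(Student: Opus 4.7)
The plan is to expand $X$ into atomic pieces using assumption (i), arrange the resulting Boolean combination in conjunctive normal form, and then compress each clause into the required shape.

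By (i), $X$ is a finite Boolean combination of sets of the form $A = \{x \in M : \exists \bar y \in P^n,\ \phi(x,\bar y)\}$ with $\phi$ an $\la$-$B$-formula. Applying $\la$-cell decomposition to the set in $M \times M^{mn}$ defined by $\phi$, compatibly with the projection onto the last $mn$ coordinates, produces cells which over a common $\la$-$B$-definable base $R \subseteq M^{mn}$ are either the graph $\{(g(\bar y),\bar y) : \bar y \in R\}$ of an $\la$-$B$-definable function $g$, or an open strip $\{(x,\bar y) : \bar y \in R,\ h_1(\bar y) < x < h_2(\bar y)\}$ between two $\la$-$B$-definable continuous functions $h_1<h_2$. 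Intersecting with $M \times P^n$ and projecting onto $M$, a graph cell contributes $g(R \cap P^n)$, which is already of the form $f(D)$ with $f=g$ and $D = R \cap P^n\subseteq P^n$ an $\la(P)$-$B$-definable set; a strip cell contributes $\bigcup_{\bar y \in R \cap P^n}(h_1(\bar y), h_2(\bar y))$, which is $\la$-$B$-definable by Lemma~\ref{ldefinable}. Hence every atom $A$ is a finite union of positive images $f(D)$ together with a single $\la$-$B$-definable set.

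Writing $X$ in conjunctive normal form with respect to the $A$'s, each clause is a finite union of positive images $f(D)$, negative images $M \setminus f(D)$, and $\la$-$B$-definable sets. The $\la$-$B$-definable pieces in a clause combine trivially into a single $Y$. Several negative images in a clause collapse using
$$(M \setminus f_1(D_1)) \cup (M \setminus f_2(D_2)) = M \setminus (f_1(D_1) \cap f_2(D_2)),$$
together with the observation that $f_1(D_1) \cap f_2(D_2)$ is itself a single image $g(E)$ via $E = \{(\bar y_1,\bar y_2) \in D_1 \times D_2 : f_1(\bar y_1) = f_2(\bar y_2)\} \subseteq P^{n_1+n_2}$ and $g(\bar y_1,\bar y_2) = f_1(\bar y_1)$; iterating reduces an arbitrary number of negative images to one.

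The main obstacle is the dual step of combining several positive images $f_1(D_1) \cup f_2(D_2)$ into a single $f(D)$, since the natural piecewise definition of $f$ would require the $\la(P)$-definable indicator of $D_1$, which is not available in $\la$. My plan is to introduce a ``tag'' coordinate inside $P$: fix two distinct elements $p_1 \neq p_2$ of $P$, which we may include in $B$ without disturbing the hypothesis that $B \setminus P$ is $\dcl$-independent over $P$. Then set
$$D = \bigl(\{p_1\} \times D_1 \times P^{n_2}\bigr) \cup \bigl(\{p_2\} \times P^{n_1} \times D_2\bigr) \subseteq P^{1+n_1+n_2},$$
and define $f(p,\bar y_1,\bar y_2)=f_1(\bar y_1)$ if $p=p_1$ and $f(p,\bar y_1,\bar y_2)=f_2(\bar y_2)$ otherwise, which is $\la$-$B$-definable and yields $f(D) = f_1(D_1) \cup f_2(D_2)$. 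Iterating collapses any finite number of positive images to one, while the degenerate cases ($|P| \leq 1$ or some $P^{n_i}$ empty) contribute only finitely many $\la$-$B$-definable points that are absorbed into $Y$. Assembling the three ingredients, each CNF clause of $X$ takes the required form $f_1(D_1)\cup(M\setminus f_2(D_2))\cup Y$, and $X$ is realized as a finite intersection of such clauses.
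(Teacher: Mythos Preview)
Your argument follows the paper's proof closely: invoke condition~(i), cell-decompose each atom into point pieces $f(D)$ and interval pieces absorbed via Lemma~\ref{ldefinable}, pass to conjunctive normal form, and collapse each clause. The only noteworthy difference is in the collapsing step. Your intersection trick for negative images---writing $f_1(D_1)\cap f_2(D_2)$ as a single image $g(E)$ with $E\subseteq P^{n_1+n_2}$---is self-contained and introduces no new parameters, which is arguably neater than what the paper does. For positive images, however, the paper uses the two constant symbols $c,d$ that were added to $\la$ at the start of Section~\ref{opencoresection} precisely for this purpose; with those constants the combined function stays $\la$-$B$-definable without touching $B$. Your alternative of tagging with $p_1,p_2\in P$ does yield $D\subseteq P^{1+n_1+n_2}$ on the nose, but forces you to adjoin the coordinates of $p_1,p_2$ to $B$, so strictly speaking you have proved the lemma for $B\cup\{p_1,p_2\}$ rather than for the given $B$. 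This is harmless for the downstream uses in Propositions~\ref{unaryopen} and~\ref{unaryfunction}, since $p_1,p_2$ can be fixed once and for all, but it does not quite match the lemma as stated; if you want the statement exactly as written, adopt the paper's device of fixing the two tags as language constants from the outset.
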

\begin{proof}  By condition (i) of Theorem \ref{opencore}, $X$ is a boolean combination of sets of the form
\begin{equation*}
\bigcup_{u \in P^n} \{ a \in M \ : \ \mathcal{M} \models \phi(a,u)\},
\end{equation*}
where $\phi$ is an $\la$-$B$-formula. By cell decomposition in $\mathcal M$ applied to $\phi$, we may assume that $X$ is a boolean combination of sets of the form
\begin{align}
\{a\in M: h_1(u) < & a < h_2(u)\text{ for some } u \in C\}\label{form2}\\
\{a\in M: f(u) =& a\text{ for some } u\in D\} \label{form1}
\end{align}
where $f,h_1,h_2:M^{mn}\to M$ are $\la$-$B$-definable functions and $C,D$ are $\la(P)$-$B$-definable subset of $P^n$. After writing $X$ in conjunctive normal form we get
$X=\bigcap X_i$, where $X_i$ is a finite union of sets of the form \eqref{form2} and \eqref{form1} and of sets whose complements are of of the form \eqref{form2} and \eqref{form1}. Using the observation at the end of the first paragraph of this section, we may even assume that $X_i$ is of the form
\begin{equation*}
f_1(D_1) \cup (M\setminus f_2(D_2)) \cup \bigcup_j Y_{j} \cup (M\setminus Z_{j}),
\end{equation*}
where $Y_{j},Z_{j}$ are of the form \eqref{form2}, $f_1,f_2$ are $\la$-$B$-definable functions and $D_1,D_2$ are $\la(P)$-$B$-definable subsets of $P^n$.
By Lemma \ref{ldefinable}, the set $\bigcup_j Y_{j} \cup (M\setminus Z_{j})$ is $\la$-$B$-definable. Thus each $X_i$ is of the form
\begin{equation*}
f_1(D_1) \cup (M\setminus f_2(D_2)) \cup Y,
\end{equation*}
where $Y$ is an $\la$-$B$-definable set.
\end{proof}

\medskip\noindent
{\bf Remark.} By applying this lemma to the complement of $X$ we get that $X$ is a finite union of sets of the form
\begin{equation*}
g_1(E_1) \cap (M\setminus g_2(E_{2})) \cap Z,
\end{equation*}
where for $i=1,2$, $g_i:M^{mn_i}\to M$ are $\la$-$B$-definable functions, $E_i\subseteq P^{n_i}$ are $\la(P)$-$B$-definable and $Z\subseteq M$ is $\la$-$B$-definable.

\begin{proposition}\label{unaryopen} Every unary open set definable set in $(\mathcal{M},P)$ is definable in $\Cal M$.
\end{proposition}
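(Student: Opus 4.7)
The plan is to show that $M\setminus X$ is $\Cal L$-definable, which then gives $X$ itself. The argument begins with a routine reduction to a parameter set $B$ for which $B\setminus P$ is $\dcl$-independent over $P$: starting from any defining parameter set $B_0$, any element of $B_0\setminus P$ that lies in $\dcl$ of $P$ together with the other elements of $B_0$ can be replaced by an existential quantifier over $P$-tuples, yielding an $\Cal L(P)$-definition of $X$ over a smaller set with the desired independence property. This independence is needed in order to invoke Lemma \ref{denseinimage}, which ultimately relies on condition (ii) of Theorem \ref{opencore}.

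The key step is to apply the Remark following Lemma \ref{lboolcomb} to the set $M\setminus X$, obtaining a representation
$$M\setminus X=\bigcup_{i=1}^{r} W_i, \qquad W_i=g_1^i(E_1^i)\cap\bigl(M\setminus g_2^i(E_2^i)\bigr)\cap Z_i,$$
with $g_1^i,g_2^i$ being $\Cal L$-$B$-definable functions, $E_1^i,E_2^i\subseteq P^{n_i}$ being $\Cal L(P)$-$B$-definable, and $Z_i\subseteq M$ being $\Cal L$-$B$-definable. Because $X$ is open, $M\setminus X$ is closed, so taking closures and using that closure distributes over finite unions gives
$$M\setminus X=\overline{M\setminus X}=\bigcup_{i=1}^{r}\overline{W_i}.$$
Thus the problem reduces to showing that each $\overline{W_i}$ is $\Cal L$-$B$-definable.

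To handle a fixed $W_i$, I will introduce $V_i:=\{x\in E_1^i:g_1^i(x)\in(M\setminus g_2^i(E_2^i))\cap Z_i\}$, an $\Cal L(P)$-$B$-definable subset of $P^{n_i}$ with $g_1^i(V_i)=W_i$. Applying Lemma \ref{denseinimage} with $f=g_1^i$ then produces an $\Cal L$-$B$-definable set $E$ such that $V_i$ is dense in $E$ (hence $W_i\subseteq g_1^i(E)$) and $W_i=g_1^i(V_i)$ is dense in $g_1^i(E)$; these two facts together give $\overline{W_i}=\overline{g_1^i(E)}$, and the latter is $\Cal L$-$B$-definable by Lemma 1.3.4 of \cite{vdDries-book}. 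The main conceptual point is that openness of $X$ enters only through the single identity $M\setminus X=\bigcup_i\overline{W_i}$; the rest is the technical step of exhibiting each $W_i$ as $g_1^i(V_i)$ for a $V_i\subseteq P^{n_i}$ so that Lemma \ref{denseinimage} applies cleanly.
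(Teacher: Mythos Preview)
Your argument is correct and follows a route dual to the paper's. The paper applies Lemma~\ref{lboolcomb} directly to $X$, writing $X=\bigcap_i X_i$ with each $X_i=f_1(D_1)\cup(M\setminus f_2(D_2))\cup Y$; openness gives $X=\bigcap_i\overset{\circ}{X_i}$, and one then shows each $X_i$ is the union of an $\la$-definable set and a set with empty interior (invoking smallness of $P$ to see that $f_1(D_1)$ has empty interior, and Lemma~\ref{denseinimage} for the $f_2$-piece), whence $\overset{\circ}{X_i}$ is $\la$-definable. You instead pass to the closed complement, use the Remark after Lemma~\ref{lboolcomb} to write $M\setminus X=\bigcup_i W_i$, and take closures; a single application of Lemma~\ref{denseinimage} to $V_i\subseteq P^{n_i}$ with $g_1^i(V_i)=W_i$ gives $\overline{W_i}=\overline{g_1^i(E)}$ directly. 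Your version is slightly more economical: it avoids the explicit smallness argument and the mildly delicate computation of the interior of a union, at the small cost of first packaging $W_i$ as the image $g_1^i(V_i)$. Both proofs rest on the same two ingredients, Lemma~\ref{lboolcomb} and Lemma~\ref{denseinimage}, so the difference is organizational rather than conceptual.
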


\begin{proof}  Let $X$ be an open subset of $M$ definable in $(\Cal M,P)$. By Lemma \ref{lboolcomb}, there are sets $X_1,\dots,X_l\subseteq M$ such that $X=\bigcap_{i=1}^{l}X_i$ and every $X_i$ is of the form
\begin{equation}\label{thmform}
f_1(D_1) \cup (M\setminus f_2(D_2)) \cup Y,
\end{equation}
where $f_1,f_2$ are $\la$-definable functions, $D_1,D_2$ are $\la(P)$-definable subsets of $P^n$ and $Y$ is an $\la$-definable subset of $M$. Since $X$ is open,
\begin{equation*}
X = \bigcap_{i=1}^{l} \overset{\circ}{X_i},
\end{equation*}
where $\overset{\circ}{X_i}$ is the interior of $X_i$. Hence it is only left to show that $\overset{\circ}{X_i}$ is definable in $\Cal M$. We will show that $X_i$ is the union of a set $A$ with empty interior and a set $B$ definable in $\mathcal M$. Since $B$ is a finite union of intervals and points, $A$ contributes to the frontier of $X_i$ only. Hence the interior of $X_i$ is the interior $B$.
\\
\noindent Therefore, let $X_i$ be of the form \eqref{thmform}. Consider the
set \begin{equation*}
D:=\{ u \in D_2 \ : \ f_2(u) \notin Y \cup f_1(D_1)\}.
\end{equation*}
By Lemma \ref{denseinimage}, there is an $\la$-definable set $E$ such that $D$
is dense in $E$ and $f_2(D)$ is dense in $f_2(E)$. Hence we get by equation \eqref{thmform} that
\begin{align*}
X_i &= f_1(D_1) \cup (M\setminus f_2(D_2))\cup Y\\
 &= f_1(D_1) \cup (f_2(E)\setminus f_2(D)) \cup (M\setminus f_2(E)) \cup Y.
\end{align*}
By smallness of $P$, $f_1(D_1)$ has empty interior. Since $f_2(D)$ is dense in $f_2(E)$, the set $f_2(E)\setminus f_2(D)$ has empty interior as well. Since $f_1(D_1)\cap f_2(D)$ is empty, the intersection $f_2(E)\cap f_1(D_1)$ is a subset of $f_2(E)\setminus f_2(D)$. Hence  $f_1(D_1) \cup (f_2(E)\setminus f_2(D))$ has empty interior.
Thus $X_i$ is a union of a set with empty interior and a set definable in $\mathcal M$.\end{proof}

\begin{proposition}\label{unaryfunction} Every cofinitely continuous  $\la(P)$-definable unary function is piecewise $\la$-definable.
\end{proposition}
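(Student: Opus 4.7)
The plan is to mimic the strategy of Proposition \ref{unaryopen}, applied this time to the graph of $f$ rather than to $f$ directly, using Proposition \ref{unaryopen} itself as the key device for comparing $f$ with $\la$-definable continuous functions. First I would reduce to the case in which $f$ is continuous on a single open $\la$-definable interval $I$. The set $F$ of discontinuities of $f$ is finite and $\la(P)$-definable, and $M\setminus F$ is open and $\la(P)$-definable, so Proposition \ref{unaryopen} makes both $M\setminus F$ and $F$ $\la$-definable. Each individual $a\in F$ is $\la$-definable (as the complement of the open $\la(P)$-definable set $M\setminus\{a\}$), and the same reasoning shows each value $f(a)$ is $\la$-definable, so $f$ is already piecewise $\la$-definable on $F$. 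Partitioning the continuity set into components then reduces us to an open interval $I$ on which $f$ is continuous.

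The crucial observation is that for every $\la$-definable continuous function $g\colon I\to M$ the set
\[
E_g := \{a\in I : f(a)=g(a)\}
\]
is $\la$-definable. Indeed, $I\setminus E_g=\{a\in I:f(a)<g(a)\}\cup\{a\in I:g(a)<f(a)\}$ is open and $\la(P)$-definable, so Proposition \ref{unaryopen} makes it $\la$-definable. Since a finite union of sets closed in $I$ is closed in $I$, it suffices to exhibit finitely many $\la$-definable continuous functions $g_1,\dots,g_k\colon I\to M$ for which $\bigcup_i E_{g_i}$ is dense in $I$: density plus closedness force the union to equal $I$, giving the required partition on which $f$ coincides with some $g_i$.

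To extract such $g_1,\dots,g_k$, the plan is to analyze $\gr(f|_I)\subseteq I\times M$ structurally. By condition (i), $\gr(f|_I)$ is a boolean combination of sets of the form $\bigcup_{u\in P^n}\{(a,b)\in I\times M:\phi(a,b,u)\}$ with $\phi$ an $\la$-formula. Applying cell decomposition in $\mathcal M$ to each such $\phi(a,b,u)$, the cells in $(a,b,u)$-space project, after quantifying $u$ over $P^n$, to sets of two shapes: strip sets $\bigcup_{u\in P^n\cap C}\{(a,b):h_1(a,u)<b<h_2(a,u)\}$ and graph sets $\bigcup_{u\in P^n\cap C}\{(a,h(a,u))\}$ with $h,h_1,h_2$ $\la$-definable. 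By Lemma \ref{denseinimage} applied fiberwise (together with condition (ii$'$)), each graph set is dense in an $\la$-definable set $\tilde E\subseteq I\times M$, and a further cell decomposition of $\tilde E$ in $\mathcal M$ writes $\tilde E$ as a finite union of graphs of $\la$-definable continuous functions together with lower-dimensional remainders; collecting these functions across all graph sets yields the desired finite family $g_1,\dots,g_k$. Single-valuedness and continuity of $f$ are then used to argue that the strip contributions cannot force $f$ off this finite family on more than a nowhere-dense subset of $I$.

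The main obstacle I anticipate is precisely this last assertion: verifying that the two-dimensional strip contributions do not leave $f(a)$ outside $\{g_1(a),\dots,g_k(a)\}$ on a set with nonempty interior. One has to exploit that the fibers of $\gr(f)$ are singletons, combine this with smallness of $P$ and continuity of $f$, and invoke the Corollary \ref{smallnesscor}-style density of complements of images of small sets. It may be cleanest to first establish a two-variable analogue of Lemma \ref{lboolcomb} for subsets of $M^2$ by rerunning its inductive proof, and then apply the pattern of the proof of Proposition \ref{unaryopen} directly to $\gr(f|_I)\subseteq I\times M$ in place of a unary open set.
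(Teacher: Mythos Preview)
Your reduction to a single interval and the observation that $E_g=\{a\in I:f(a)=g(a)\}$ is $\la$-definable via Proposition~\ref{unaryopen} are both correct and pleasant. The gap is exactly where you locate it, and it is not merely a technicality: your procedure for producing $g_1,\dots,g_k$ collects only the graph-cell contributions, but the boolean combination representing $\gr(f)$ may consist entirely of strip sets and their complements. Think of $\gr(f)=S_1\setminus S_2$ with both $S_i$ strips; your recipe then yields no $g_i$ at all. One might try to salvage this by including the strip-boundary functions $h_1(a,u),h_2(a,u)$, but these depend on the hidden variable $u\in P^n$, not on $a$ alone, so they do not give unary $\la$-definable functions of $a$; turning them into such functions is precisely the problem to be solved.

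The paper sidesteps the two-dimensional analysis by working \emph{pointwise}. After passing to an $|\la|^+$-saturated model, smallness of $P$ provides a dense set $W\subseteq M$ of points $a$ for which $(\{a\}\cup B)\setminus P$ is still $\dcl$-independent over $P$; for such $a$ the enlarged parameter set $B\cup\{a\}$ again satisfies the hypothesis under which Lemma~\ref{lboolcomb} and Lemma~\ref{denseinimage} apply. One then applies the dual form of Lemma~\ref{lboolcomb} to the \emph{singleton} $\{f(a)\}\subseteq M$, writing it as a finite union of sets $f_1(D_1)\cap(M\setminus f_2(D_2))\cap Y$ over $B\cup\{a\}$. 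Setting $D=\{u\in D_1:f_1(u)\in Y\setminus f_2(D_2)\}$, Lemma~\ref{denseinimage} makes $f_1(D)$ dense in an $\la$-$(B\cup\{a\})$-definable set $f_1(E)$; were $f_1(E)$ to contain an interval around $f(a)$, density would force $f_1(D)$ to be infinite, contradicting $f_1(D)\subseteq\{f(a)\}$. Hence $f(a)$ is isolated in $f_1(E)$ and so $\la$-definable over $B\cup\{a\}$. Compactness now yields finitely many $\la$-$B$-definable functions $h_1,\dots,h_s$ with $f(a)\in\{h_1(a),\dots,h_s(a)\}$ for all $a\in W$, and continuity of $f$ together with the monotonicity theorem finishes exactly as your closing step envisions. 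The idea you are missing is to treat $a$ as an additional parameter (carefully chosen so that the parameter-set hypothesis of condition~(ii) survives) and analyze the vertical fiber $\{f(a)\}$ in one dimension, where the strip issue evaporates because a singleton cannot be dense in an interval.
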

\begin{proof} We may assume that $(\Cal M,P)$ is $|\Cal L|^+$-saturated.
Let  $f: M \to M$ be a cofinitely continuous function which is $\la(P)$-$B$-definable. Take $a\in M$ such that $(\{a\}\cup B)\setminus P$ is $\dcl$-independent over $P$. Note that by saturation and smallness of $P$, such elements form a dense subset $W$ of $M$.\\

\noindent First, we will show that $f(a)$ is $\la$-$B$-definable over $a$.
By Lemma \ref{lboolcomb}, the singleton set $\{ f(a) \}$ is a finite union of sets of the form
\begin{equation*}
f_1(D_1) \cap (M\setminus f_2(D_{2})) \cap Y,
\end{equation*}
where $f_1,f_2$ are $\la$-$(B\cup \{a\})$-definable functions, $D_1,D_2$ are $\la(P)$-$(B\cup \{a\})$-definable subsets of $P^n$ and $Y$ is an $\la$-$(B\cup \{a\})$-definable subset of $M$. Define $D$ to be the set
\begin{equation*}
\{ u \in D_1 \ : \ f_1(u) \in Y \setminus f_2(D_2)\}.
\end{equation*}
Hence
\begin{equation*}
f_1(D)= f_1(D_1) \cap (M\setminus f_2(D_{2})) \cap Y.
\end{equation*}
By Lemma \ref{denseinimage}, there is an $\la$-$(B\cup\{a\})$-definable set $E$ such that $D$ is dense in $E$ and $f_1(D)$ is dense in $f_1(E)$. First note that $f(a)$ must be in the $\la$-$(B\cup\{a\})$-definable set $f_1(E)$. If there is no open interval around $f(a)$ in $f_1(E)$, then $f(a)$ is $\la$-$B$-definable over $a$. So assume for a contradiction, that there is an open interval $I$ around $f(a)$ in $f_1(E)$. Then $f_1(D)\cap I$ is dense in $I$. Hence there are infinitely many $b \in I$ such that $b\in f_1(D)$. This is a contradiction, since $f_1(D)=\{f(a)\}$. Hence $f(a)$ is $\la$-$B$-definable over $a$.

\noindent By the compactness theorem, we get finitely many $\la$-$B$-definable functions $h_1,\dots,h_s$ such that for every $a \in W$, there is $i\in \{1,\dots,s\}$ with $f(a)=h_i(a)$. Let $Z_0$ be the finite set of points of discontinuity of $f$. By monotonicity theorem of o-minimal structures, there is a finite set $Z_1$ such that for every $c,d \in Z_1$ with $(c,d)\cap Z_1=\emptyset$ and for $i,j \in \{1,\dots,s\}$, $h_i,h_j$ are monotone on $(c,d)$, and either $h_i$ and $h_j$ are equal on $(c,d)$ or
\begin{itemize}
\item $h_i(x)< h_j(x)$ for every $x \in (c,d)$ or
\item $h_i(x)> h_j(x)$ for every $x \in (c,d)$.
\end{itemize}
Hence for $c,d \in Z_0 \cup Z_1$ with $(c,d)\cap (Z_0\cup Z_1)=\emptyset$, $f$ is continuous on $(c,d)$. Further $W\cap(c,d)$ is dense in $(c,d)$ and for every $w\in W\cap(c,d)$ we have $f(w)=h_i(w)$ for some $i\in \{1,\dots,s\}$. Since $f$ is continuous on $(c,d)$ and all the $h_i$'s are a positive distance apart, it follows from o-minimality that there is $i\in \{1,\dots,s\}$ such that $f$ and $h_i$ are equal on a dense subset of $(c,d)$ and hence equal on $(c,d)$. So $f$ is given piecewise by $\la$-$B$-definable functions.

\end{proof}

\medskip\noindent
Now Theorem \ref{opencore} follows directly from Theorem \ref{fromBMS} in combination with Propositions \ref{unaryopen}, \ref{unaryfunction}.

\medskip\noindent
{\bf Remark. }As a consequence of Theorem \ref{opencore}, we take care of an unfinished business from \cite{dependent}. Namely
we can simplify Theorem 1.3 of that paper by removing the assumption (iii), since it follows from the first two assumptions using Theorem \ref{opencore}.

\subsection{Subgroups of the unit circle}
Let $\Gamma$ be a subgroup of the unit circle $\mathbb S$ of finite rank. Here we illustrate how Theorem~\ref{opencore-circle} follows from Corollary~\ref{opencore-general}. Note that if $\Gamma$ is finite, then Theorem~\ref{opencore-circle} is trivial, so we assume that it is infinite. Then $\Gamma$ is dense in $\mathbb S$ and $\Gamma$ has the Mordell-Lang property by \cite[Theorem 2]{laurent_exp-poly}.


\bibliography{ref}

\end{document}